\newcommand{\script}{\mathcal}
\newcommand{\parentheses}[1]{{\left( {#1} \right)}}
\newcommand{\p}{\parentheses}
\newcommand{\cl}{\mathrm{cl}}
\newcommand{\closure}[1]{\overline{#1}}
\newcommand{\Set}[1]{{\left\lbrace {#1} \right\rbrace}}
\newcommand{\singleton}{\Set}
\newcommand{\cardinality}[1]{{\left\lvert {#1} \right\rvert}}
\def\set#1:#2{\Set{{#1} \colon {#2}}}
\newcommand{\rooot}[1]{\text{r}\p{#1}}
\renewcommand{\times}{\square}
\DeclareMathOperator{\Ball}{Ball}
\tikzset{
square/.pic={
\node[draw,circle,scale=.3,fill] (b1) at (0,-2){};
\node[draw,circle,scale=.3,fill] (b2) at (0,-3){};
\node[draw,circle,scale=.3,fill] (start) at (0,-5){};
\node[draw,green,circle,scale=.3,fill] (green) at (-1,-2){};
\node[draw,yellow,circle,scale=.3,fill] (yellow) at (-1,-3){};
\draw[dotted] (0,0)--(b1);
\draw[dotted] (b2)--(start);
\draw (green)--(b1)--(b2)--(yellow);
\draw (start)--(0,-6);
\draw (-1,-6)--(1,-6)--(1,-7.6)--(1.3,-7.4)--(1.3,-8)--(-0.7,-8)--(-0.7,-6.4)--(-1,-6.6)--(-1,-6)--(1,-6);
}
}
\tikzset{
triangle/.pic={
\node[draw,circle,scale=.3,fill] (b1) at (0,-2){};
\node[draw,circle,scale=.3,fill] (b2) at (0,-3){};
\node[draw,circle,scale=.3,fill] (start) at (0,-5){};
\node[draw,yellow,circle,scale=.3,fill] (green) at (-1,-2){};
\node[draw,green,circle,scale=.3,fill] (yellow) at (-1,-3){};
\draw[dotted] (0,0)--(b1);
\draw[dotted] (b2)--(start);
\draw (green)--(b1)--(b2)--(yellow);
\draw (start)--(0,-6);
\draw (0,-6)--(1,-8)--(-1,-8)--(0,-6);
}
}
\tikzset{
decsquare/.pic={
\pic at (0,0) {square};
\pic[scale = 0.3,every node/.style={transform shape}] at (-0.6,-8) {square};
\pic[scale = 0.3, every node/.style={transform shape}] at (0.3,-8) {triangle};
\pic[scale = 0.3, every node/.style={transform shape}] at (1.2,-8) {square};
}
}
\tikzset{
dectriangle/.pic={
\pic at (0,0) {triangle};
\pic[scale = 0.3, every node/.style={transform shape}] at (-0.9,-8) {triangle};
\pic[scale = 0.3, every node/.style={transform shape}] at (0.0,-8) {triangle};
\pic[scale = 0.3, every node/.style={transform shape}] at (0.9,-8) {square};
}
}
\tikzset{
decdecsquare/.pic={
\pic at (0,0) {square};
\pic[scale = 0.3, every node/.style={transform shape}] at (-0.6,-8) {decsquare};
\pic[scale = 0.3, every node/.style={transform shape}] at (0.3,-8) {dectriangle};
\pic[scale = 0.3, every node/.style={transform shape}] at (1.2,-8) {decsquare};
}
}
\tikzset{
decdectriangle/.pic={
\pic at (0,0) {triangle};
\pic[scale = 0.3, every node/.style={transform shape}] at (-0.9,-8) {dectriangle};
\pic[scale = 0.3, every node/.style={transform shape}] at (0,-8) {dectriangle};
\pic[scale = 0.3, every node/.style={transform shape}] at (0.9,-8) {decsquare};
}
}
\newtheorem{theorem}{Theorem} \numberwithin{theorem}{section} 
\newtheorem{lemma}[theorem]{Lemma}
\newtheorem{cor}[theorem]{Corollary}
\newtheorem{prop}[theorem]{Proposition}
\newtheorem{defn}[theorem]{Definition}
\newtheorem{prob}{Problem}
\newtheorem*{claimm}{Claim}
\newtheorem*{mainresult}{Theorem \ref{t:one}}
\newcommand\numberthis{\addtocounter{equation}{1}\tag{\theequation}}
\newcommand{\N}{\mathbb{N}}
\begin{document}
\title[Non-reconstructible locally finite graphs]{Non-reconstructible locally finite graphs}

\author[N. Bowler, J. Erde, P. Heinig, F. Lehner, M. Pitz]{Nathan Bowler, Joshua Erde, Peter Heinig, Florian Lehner, Max Pitz}
\address{Department of Mathematics, Bundesstra{\ss}e 55, 20146 Hamburg, Germany}
\email{nathan.bowler@uni-hamburg.de, joshua.erde@uni-hamburg.de, \newline  heinig@ma.tum.de, mail@florian-lehner.net, max.pitz@uni-hamburg.de}

\keywords{Reconstruction conjecture, reconstruction of locally finite graphs, extension of partial isomorphisms, promise structure}

\subjclass[2010]{05C60, 05C63}

\begin{abstract}
Two graphs $G$ and $H$ are \emph{hypomorphic} if there exists a bijection $\varphi \colon V(G) \rightarrow V(H)$ such that $G - v \cong H - \varphi(v)$ for each $v \in V(G)$. A graph $G$ is \emph{reconstructible} if $H \cong G$ for all $H$ hypomorphic to $G$.

Nash-Williams proved that all locally finite connected graphs with a finite number $\geq 2$ of ends are reconstructible, and asked whether locally finite connected graphs with one end or countably many ends are also reconstructible.

In this paper we construct non-reconstructible connected graphs of bounded maximum degree with one and countably many ends respectively, answering the two questions of Nash-Williams about the reconstruction of locally finite graphs in the negative.
\end{abstract}

\maketitle
\section{Introduction}
Two graphs $G$ and $H$ are \emph{hypomorphic} if there exists a bijection $\varphi$ between their vertex sets such that the induced subgraphs $G - v$ and $H - \varphi(v)$ are isomorphic for each vertex $v$ of $G$. We say that a graph $G$ is \emph{reconstructible} if $H \cong G$ for every $H$ hypomorphic to $G$. The \emph{Reconstruction Conjecture}, a famous unsolved problem attributed to Kelly and Ulam, suggests that every finite graph with at least three vertices is reconstructible.

For an overview of results towards the Reconstruction Conjecture for finite graphs see the survey of Bondy and Hemminger \cite{BH77}. %Harary \cite{H64} proposed the Reconstruction Conjecture for infinite graphs, however Fisher \cite{F69} found a counterexample, which was improved to the following simpler counterexample by Fisher, Graham and Harary \cite{FGH72}: Consider the infinite tree $G$ in which every vertex has countably infinite degree, and the graph $H$ formed by taking two disjoint copies of $G$, which we will write as $G \oplus G$. For each vertex $v$ of $G$, the induced subgraph $G - v$ is isomorphic to $G \oplus G \oplus \cdots$, a disjoint union of countably many copies of $G$, and similarly for each vertex $w$ of $H$, the induced subgraph $H - w$ is isomorphic to $G \oplus G \oplus \cdots$ as well. Therefore, any bijection from $V(G)$ to $V(H)$ is a hypomorphism, but $G$ and $H$ are clearly not isomorphic. Hence, $G$ is not reconstructible.
The corresponding reconstruction problem for infinite graphs is false: the countable regular tree $T_\infty$, and two disjoint copies of it (written as $T_\infty \cup T_\infty$) are easily seen to be a pair of hypomorphic graphs which are not isomorphic. This example, however, contains vertices of infinite degree. Regarding locally finite graphs, Harary, Schwenk and Scott \cite{HSS72} showed that there exists a non-reconstructible locally finite forest. However, they conjectured that the Reconstruction Conjecture should hold for locally finite trees. %The conjecture has been verified in a number of special cases. Bondy and Hemminger \cite{BH74} showed that every tree with at least two but a finite number of ends is reconstructible and Thomassen \cite{T78} showed that this also holds for one-ended trees. Also, Andreae \cite{A81} proved that every tree with countably many ends is reconstructible. 
This conjecture has been verified for locally finite trees with at most countably many ends in a series of paper \cite{A81, BH74, T78}. However, very recently, the present authors have constructed a counterexample to the conjecture of Harary, Schwenk and Scott.

\begin{theorem}[Bowler, Erde, Heinig, Lehner, Pitz \cite{BEHLP17}]
\label{t:old}
There exists a non-recon-structible tree of maximum degree three.
\end{theorem}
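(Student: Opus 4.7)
The plan is to construct explicitly two non-isomorphic locally finite trees $T$ and $T'$ of maximum degree three, together with a bijection $\varphi \colon V(T) \to V(T')$ such that $T - v \cong T' - \varphi(v)$ for every $v \in V(T)$. The underlying idea is to take a \emph{backbone} in the form of a double ray and to attach a rooted \emph{decoration} at each backbone vertex; the decorations are rooted trees of maximum degree three in which the root has degree one inside the decoration, so the whole tree has maximum degree three. The two trees $T$ and $T'$ will use the same palette of decorations but arranged in two different bi-infinite patterns which cannot be matched by any automorphism of the backbone. This rules out $T \cong T'$ provided the decorations are chosen rigidly enough that any isomorphism $T \to T'$ must preserve the backbone setwise.

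The hypomorphism is obtained via a back-and-forth construction. For a vertex $v$ lying inside a decoration $D$, removal affects only the local structure at $D$, so $v$ can be paired with a corresponding vertex in a matching decoration of $T'$. The more delicate case is a backbone vertex $r$: removing $r$ splits $T$ into two infinite decorated half-rays, and one must find a backbone vertex $r'$ in $T'$ whose removal yields the same unordered pair of half-rays up to isomorphism. This should be possible because, while the two bi-infinite decoration sequences are globally inequivalent, for every cut-point of one there exists a cut-point of the other such that the resulting pair of tails agree up to a finite reshuffle that can be absorbed by adjusting nearby decorations.

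To coordinate these matchings into a single bijection $\varphi$, I would use the framework of \emph{promise structures} alluded to in the keywords: a promise structure encodes a finite partial matching between $V(T)$ and $V(T')$ together with isomorphism witnesses between the corresponding deleted subgraphs, and the central technical step is an \emph{extension lemma} stating that any promise structure can be extended to cover any prescribed additional vertex on either side. Iterating the extension lemma in a back-and-forth fashion then yields the desired bijection $\varphi$. The main obstacle is the tension between non-isomorphism and hypomorphism: the decoration scheme must be flexible enough that every finite promise extends, yet rigid enough globally to prevent any isomorphism $T \cong T'$. I would expect this to be resolved by engineering each decoration as a ``signal post'' carrying enough combinatorial data to identify its position in the global pattern, while ensuring that any single vertex-deletion distorts just enough of the surrounding signal to permit rematching across the two trees.
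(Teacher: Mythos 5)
Your proposed architecture---a fixed double-ray backbone with decorations attached at each backbone vertex, the two trees differing only in their bi-infinite decoration patterns---cannot yield a non-reconstructible tree, and this is for a structural reason the paper itself points out. A double ray with rooted decorations of maximum degree three has at most countably many ends: exactly two if all decorations are finite, and at most $\aleph_0 \cdot (\text{ends per decoration})$ in general. But locally finite trees with at most countably many ends are known to be reconstructible (Andreae, Bondy--Hemminger, Thomassen, cited as \cite{A81, BH74, T78} in the paper). Any counterexample must therefore have continuum many ends, which forces a genuinely branching global structure that a decorated double ray cannot provide. This is the decisive obstruction, independent of the hand-waving in ``absorbed by adjusting nearby decorations'': even if you could somehow arrange the finite reshuffles, the end count alone rules out the construction.

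The paper's actual strategy is quite different. Rather than fixing a global backbone in advance, it builds the trees $T$ and $S$ recursively as unions of nested approximations $T_0 \subset T_1 \subset \cdots$ and $S_0 \subset S_1 \subset \cdots$. At each stage some leaves are coloured, and the recursion extends the trees only at coloured leaves, always extending in the same way at leaves of the same colour. The promise-structure machinery (Section~\ref{s:closure}) formalises this: a promise $\ell \in L_i$ records that the eventual subtree hanging off $\ell$ will be isomorphic to the subtree behind a designated promise edge $\vec p_i$, and the closure operation makes all promises come true while preserving partial hypomorphisms. Crucially, the set of coloured leaves at each step is infinite and the construction repeatedly glues copies of entire earlier stages onto all of them, producing an infinitely branching tree with continuum many ends. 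Non-isomorphism is certified spectrally: the lengths of long non-branching paths form a combinatorial invariant under which $T_n$ survives as an identifiable component in $T$ but not in $S$. The tension you correctly identify between rigidity (for non-isomorphism) and flexibility (for hypomorphism) is resolved not by ``signal-post'' decorations on a fixed backbone, but by the fact that each deletion $T-v$ is matched by wiring a rearranged copy of the current approximation into the \emph{next} approximation of $S$ and then propagating that rearrangement by copying to all leaves of the relevant colour.
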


%By the results of Bondy \& Hemminger, Thomassen and Andreae above, the trees constructed for Theorem \ref{t:old} necessarily have continuum many ends. 

The Reconstruction Conjecture has also been considered for general locally finite graphs. Nash-Williams \cite{NW87} showed that if $p \geq 3$ is an integer, then any locally finite connected graph with exactly $p$ ends is reconstructible; and in \cite{NW912} he showed the same is true for $p=2$. The case $p=2$ is significantly more difficult. Broadly speaking this is because every graph with $p \geq 3$ ends has some identifiable finite `centre', from which the ends can be thought of as branching out. A two-ended graph however can be structured like a double ray, without an identifiable `centre'. 

The case of $1$-ended graphs is even harder, and the following problems from a survey of Nash-Williams \cite{NW91}, which would generalise the corresponding results established for trees, have remained open.

\begin{prob}[Nash-Williams]\label{p:one}
Is every locally finite connected graph with exactly one end reconstructible?
\end{prob}

\begin{prob}[Nash-Williams]\label{p:count}
Is every locally finite connected graph with countably many ends reconstructible?
\end{prob}

In this paper, we extend our methods from \cite{BEHLP17} to construct examples showing that both of Nash-Williams' questions have negative answers. Our examples will not only be locally finite, but in fact have bounded degree.

\begin{theorem}\label{t:one}
There is a connected one-ended non-reconstructible graph with bounded maximum degree.
\end{theorem}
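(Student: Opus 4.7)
The plan is to combine the non-reconstructible tree construction of Theorem~\ref{t:old} with a one-ended bounded-degree scaffolding, adapting the promise-structure machinery of \cite{BEHLP17} to the new setting. Roughly, I would fix a bounded-degree, one-ended graph $S$ with enough local symmetry, attach small bounded-size gadgets at its vertices, and then choose these gadgets so that the resulting graph $G$ admits a non-trivial hypomorphic partner $G'$.

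More concretely, in Stage~1 I would select the scaffold $S$: a connected, one-ended, locally finite graph of bounded maximum degree whose automorphism group acts sufficiently richly on the ``levels'' of some natural combing towards the end. Natural candidates include a rooted bounded-degree tree equipped with horocyclic lateral edges on each level, which forces one-endedness while preserving substantial homogeneity between branches. In Stage~2 I would distill from \cite{BEHLP17} a finite library of bounded-size, bounded-degree gadgets whose arrangement along a backbone encoded the hypomorphic tree pair of Theorem~\ref{t:old}, and attach such a gadget at each vertex of $S$ via a labelling $\ell\colon V(S)\to\{\text{types}\}$.

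Stage~3 is the hypomorphism construction. Following \cite{BEHLP17}, one builds two labellings $\ell$ and $\ell'$ by an inductive back-and-forth argument, maintaining a promise structure which records, for each finite partial isomorphism constructed so far, what further extensions are demanded. At limit stages all promises are resolved, yielding graphs $G$, $G'$ and a bijection $\varphi\colon V(G)\to V(G')$ with $G-v\cong G'-\varphi(v)$ for every $v$. Stage~4 is verification: non-isomorphism of $G$ and $G'$ should be detected by a local invariant at a distinguished base vertex of $S$, exactly as for the tree case in \cite{BEHLP17}; one-endedness of $G$ is then inherited from $S$ because every gadget is finite and attached locally, so any escape sequence in $G$ eventually lies in the single end of $S$, while bounded maximum degree is preserved throughout by construction.

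I expect the main obstacle to be Stage~3. A one-ended scaffold is considerably more rigid than a tree: only finitely many vertices of $S$ lie between the root and any prescribed level, so the finite partial automorphisms of the scaffold that may permute gadgets are severely restricted, which in turn limits the promises that can be posed and honoured. The whole construction must therefore be orchestrated so that the moves demanded by the back-and-forth are always compatible with the rigid finite structure beneath each level of the scaffold. Striking this balance between the flexibility needed to induce non-reconstruction and the rigidity required to force a single end is the technical heart of the argument.
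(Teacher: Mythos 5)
Your proposal fixes a one-ended scaffold $S$ in advance and then tries to encode the non-reconstructible tree construction as a labelling of $S$ by bounded-size gadgets. You yourself flag the central difficulty: a fixed one-ended scaffold is far too rigid, because finite vertex cuts separate the root from each ``level'' and thereby pin down the scaffold automorphisms available to permute gadgets. That concern is correct, and it is exactly what sinks the plan as stated. The back-and-forth machinery of \cite{BEHLP17} requires you, at each stage, to attach unboundedly complex new structure behind freshly revealed promise leaves; you cannot decide in advance how to pack these ever-growing demands into bounded gadgets at vertices of a fixed $S$, and the automorphisms of $S$ that you would need to realise each new partial hypomorphism simply will not be there.

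The paper resolves this not by fixing a scaffold but by constructing the scaffold recursively alongside the graphs. At stage $n$ one has non-reconstructible-in-progress graphs $G_n$, $H_n$ together with an auxiliary tree $F_n$, built by the same promise-closure process as $G_n$ and $H_n$ and hence endowed, by construction, with automorphisms $\pi_{n,x}$ that mirror the partial hypomorphisms $h_{n,x}$ (property~\ref{commutes}). At the next stage one first forms the promise-closures $G'_{n+1}$, $H'_{n+1}$, $F_{n+1}$ (Lemmas \ref{petersclaim} and \ref{maxclaim}), and then \emph{glues} the one-ended grid-like graph $F_{n+1}\times\N$ onto both $G'_{n+1}$ and $H'_{n+1}$ along the coloured leaves. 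The commuting diagram of Lemma~\ref{maxclaim} ensures (via Lemma~\ref{l:extend}) that the gluing is compatible with the partial hypomorphisms, and Lemma~\ref{l:oneend}/Corollary~\ref{c:oneend} show the gluing collapses the many ends of $G'_{n+1}$ into one because the glued leaf set is dense for the end space. So the one end does not come from a pre-existing scaffold: it is manufactured afresh at every stage, with exactly the symmetries demanded by the promises posed so far. That is the key idea your proposal is missing. Your Stage~4 claim that one-endedness is ``inherited from $S$ because every gadget is finite'' also glosses over the fact that, in any construction that genuinely mirrors the tree case, the accumulated non-scaffold structure is itself infinite and tree-like; controlling its ends requires something like the paper's Lemma~\ref{l:oneend}, not just locality of the attachments.
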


\begin{theorem}\label{t:count}
There is a connected countably-ended non-reconstructible graph with bounded maximum degree.
\end{theorem}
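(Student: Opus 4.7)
The plan is to adapt the construction used to prove Theorem \ref{t:one}. That construction produces a one-ended non-reconstructible pair $(G_1, H_1)$ with hypomorphism $\varphi_1 : V(G_1) \to V(H_1)$ by attaching carefully designed decorations along a one-ended skeleton and invoking the promise-structure technique of \cite{BEHLP17}; a key role is played by a shift-like automorphism of the skeleton which induces $\varphi_1$.

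For Theorem \ref{t:count}, I would re-run the construction with the one-ended skeleton replaced by a \emph{countably-ended} skeleton $S_\omega$ that still admits the requisite shift automorphism. A natural candidate is a bi-infinite spine $(s_i)_{i \in \mathbb{Z}}$ to which a one-ended ``flare'' (for instance a pendant copy of a suitably chosen locally finite tree, or a pendant copy of the one-ended skeleton from Theorem \ref{t:one} itself) is rooted at each $s_i$, with the flares chosen invariant under the spine shift $s_i \mapsto s_{i+1}$. This $S_\omega$ has bounded degree, is connected, has countably many ends (two from the spine plus one per flare, accumulating to $\aleph_0$), and carries a $\mathbb{Z}$-shift action compatible with the decoration scheme of Theorem \ref{t:one}.

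Attaching the decorations and promise structure from the proof of Theorem \ref{t:one} along the spine and defining the hypomorphism $\varphi : V(G) \to V(H)$ via the spine shift, the verification that $G - v \cong H - \varphi(v)$ proceeds case-by-case. For $v$ on the spine or in one of the decorations, the argument is identical to that of Theorem \ref{t:one}. Non-isomorphism of $G$ and $H$ is inherited from the decoration-based invariant used in the one-ended case, since the extra flares are identical in $G$ and $H$, and so any isomorphism $G \to H$ would restrict to an isomorphism between the spine-plus-decoration parts, contradicting Theorem \ref{t:one}. Bounded degree and the precise end count are immediate from the construction.

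The principal obstacle is the additional case of deleting a vertex $v$ lying on one of the flares, since such a deletion leaves the spine-plus-decoration core untouched---but the cores of $G$ and $H$ are by design non-isomorphic. Resolving this forces the flares to be interwoven with the decorations, so that deleting a flare vertex does perturb the core and the spine shift can absorb the perturbation in a hypomorphism-preserving way; equivalently, one must build the flares into the promise-structure bookkeeping from the outset, rather than attaching them as inert ``pendants'' at the end of the construction. I expect this to be the principal extension beyond the proof of Theorem \ref{t:one}, and the main place where the promise-structure framework of \cite{BEHLP17} needs to be refined rather than merely reapplied.
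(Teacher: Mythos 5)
You correctly identify the critical obstacle in your own sketch: if the additional ends come from inert pendant ``flares'' attached to a skeleton, then deleting a flare vertex leaves the decorated core unaffected, and since the cores of $G$ and $H$ are non-isomorphic by design, the hypomorphism cannot be extended to such vertices. Your proposed fix --- ``build the flares into the promise-structure bookkeeping from the outset'' --- is the right diagnosis, but it is left entirely unexecuted; you acknowledge this is ``the principal extension beyond the proof of Theorem~\ref{t:one}'' and ``the main place where the promise-structure framework needs to be refined rather than merely reapplied.'' As written, this is a gap, not a proof.

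The paper's resolution is considerably simpler than a refinement of the promise framework. One re-runs the \emph{same} recursive construction of \S\ref{s:bandf}, changing only the base case: instead of finite $G_0$, $H_0$, one starts with two one-ended graphs $G_0$, $H_0$ each containing a single pendant ray (Figure~\ref{f:countbase}). Because the rays sit \emph{inside} $G_0$ and $H_0$, every copy of $G_0$ or $H_0$ created by the promise closure automatically carries its own ray, so the hypomorphism verification of Lemma~\ref{petersclaim} covers ``flare'' vertices exactly as it covers any other vertex of $G_n \cup H_n$ --- no new case analysis and no refinement of the promise machinery is needed. The only changes to the invariants are replacing \ref{ends} and \ref{dense} by \ref{ends'} and \ref{dense'} (one limit end and infinitely many free ends, with $\closure{R_n \cup B_n}$ meeting precisely the non-free ends), which allows Corollary~\ref{c:countend} to be used in place of Corollary~\ref{c:oneend}: the gluing of $F_{n+1} \times \N$ merges all accumulation ends into a single thick end while leaving the pendant rays free.

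Two further issues with your sketch. First, your non-isomorphism argument --- that ``any isomorphism $G \to H$ would restrict to an isomorphism between the spine-plus-decoration parts'' --- is not justified: an arbitrary isomorphism need not respect your spine/flare decomposition. The paper avoids this entirely by using the mii-spectrum invariant, which distinguishes $G_n$ from $H_n$ at every finite stage and passes to the limit, so the argument of \S\ref{subsec:result} carries over verbatim. Second, your end count for the skeleton (``two from the spine plus one per flare'') is a count before gluing; after gluing the spine ends merge with the thick end, and the countability of the free ends in the limit graph is obtained not by enumerating flares but by observing that $\Omega(G)$ is compact metrizable, hence second countable, hence has only countably many isolated points.
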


Since every locally finite connected graph has either finitely many, countably many or continuum many ends, Theorems \ref{t:old}, \ref{t:one} and \ref{t:count} together with the results of \cite{BH74,A81,T78} and the results of Nash-Williams \cite{NW87,NW912} provide a complete picture about what can be said about number of ends versus reconstruction:
\begin{itemize}
	\item A locally finite tree with at most countably many ends is reconstructible; but there are non-reconstructible locally finite trees with continuum many ends.
    \item A locally finite connected graph with at least two, but a finite number of ends is reconstructible; but there are non-reconstructible locally finite connected graphs with one, countably many, and continuum many ends respectively.
\end{itemize}

This paper is organised as follows: In the next section we give a short, high-level overview of our constructions which answer Nash-Williams' problems. In Sections~\ref{s:closure} and \ref{s:thickening}, we develop the technical tools necessary for our construction, and in Sections~\ref{s:proofone} and \ref{s:prooftwo}, we prove Theorems~\ref{t:one} and \ref{t:count}.

For standard graph theoretical concepts we follow the notation in \cite{D10}.

\section{Sketch of the construction}

In this section we sketch the main ideas of the construction in three steps. First, we quickly recall our construction of two hypomorphic, non-isomorphic locally finite trees from \cite{BEHLP17}. We will then outline how to adapt the construction to obtain a one-ended-, and a countably-ended counterexample respectively.

\subsection{The tree case}
This section contains a very brief summary of the much more detailed sketch from \cite{BEHLP17}. The strategy is to build trees $T$ and $S$ recursively, where at each step of the construction we ensure for some new vertex $v$ already chosen for $T$ that there is a corresponding vertex $w$ of $S$ with $T - v \cong S - w$, or vice versa. This will ensure that by the end of the construction, the trees we have built are hypomorphic. 

More precisely, at step $n$ we will construct subtrees $T_n$ and $S_n$ of our eventual trees, where some of the leaves of these subtrees have been coloured in two colours, say red and blue. 
We will only further extend the trees from these coloured leaves, and we will extend from leaves of the same colour in the same way. We also make sure that earlier partial isomorphisms between $T_n - v_i \cong S_n - w_i$ preserve leaf colours. Together, these requirements guarantee that earlier partial isomorphisms always extend to the next step.

The $T_n$ will be nested, and we will take $T$ to be the union of all of them; similarly the $S_n$ will be nested and we take $S$ to be the union of all of them. To ensure that $T$ and $S$ do not end up being isomorphic, we first ensure, for each $n$, that there is no isomorphism from $T_n$ to $S_n$. Our second requirement is that $T$ or $S$ beyond any coloured leaf of $T_n$ or $S_n$ begins with a long non-branching path, longer than any such path appearing in $T_n$ or $S_n$.  Together, this implies that $T$ and $S$ are not isomorphic. 

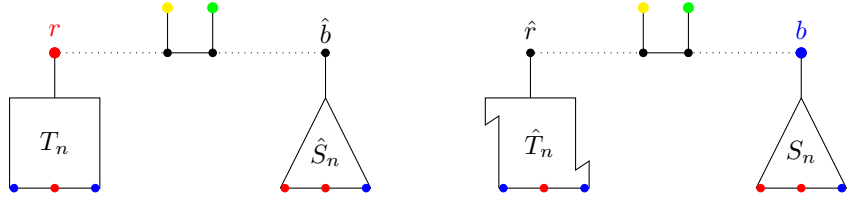
\begin{figure}[ht!]
\begin{subfigure}[t]{0.5\textwidth}
\centering
\begin{tikzpicture}[scale=0.6]

\node[text=red] at (1,3.5) {$r$};
\node at (7,3.5) {$\hat{b}$};

\node[draw, circle,scale=.3, fill] (Stop) at (7,3) {};
  
\draw (0,0) -- (2,0) -- (2,2) -- (0,2) -- (0,0);
\draw (1,2) -- (1,3);

\node at (1,1) {$T_n$};
  
\draw (6,0) -- (8,0) -- (7,2) -- (6,0);
\draw (7,2) -- (7,3);

\node at (7,0.8) {$\hat{S}_n$};

\node[draw, red, circle,scale=.3, fill] () at (1,.0) {};
\node[draw, blue, circle,scale=.3, fill] () at (0.1,.0) {};
\node[draw, blue, circle,scale=.3, fill] () at (1.9,.0) {};

\node[draw, red, circle,scale=.3, fill] () at (6.1,.0) {};
\node[draw, red, circle,scale=.3, fill] () at  (7,.0) {};
\node[draw, blue, circle,scale=.3, fill] () at (7.9,.0) {};
  
\node[draw, circle,scale=.3, fill] (gadget1) at (3.5,3) {};
\node[draw, circle,scale=.3, fill] (gadget2) at (4.5,3) {};
\draw (gadget1) -- (gadget2);

\node[draw, red, circle,scale=.4, fill] (Ttop) at (1,3) {};
    
\draw[dotted] (Ttop) -- (gadget1);
\draw[dotted] (Stop) -- (gadget2);
    
\draw(gadget1) -- (3.5,4);
\node[draw, yellow, circle,scale=.4, fill] (leaf1) at (3.5,4) {};
\node[draw, green, circle,scale=.4, fill] (leaf1) at (4.5,4) {};
\draw (gadget2) -- (leaf1);

\node[draw, red, circle,scale=.4, fill] (Ttop) at (1,3) {};
     
\end{tikzpicture}
\end{subfigure}%
\begin{subfigure}[t]{0.5\textwidth}
\centering
\begin{tikzpicture}[scale=0.6]

\node[draw, circle,scale=.3, fill] (Ttop) at (1,3) {};
\node[text=blue] at (7,3.5) {$b$};
\node at (1,3.5) {$\hat{r}$};

\node[draw, blue, circle,scale=.4, fill] (Stop) at (7,3) {};
  
\draw (0.3,0) -- (2.3,0) -- (2.3,0.6) -- (2,0.4)-- (2,2) -- (0,2) -- (0,1.4)--(0.3,1.6)--(0.3,0);
\draw (1,2) -- (1,3);

\node at (1.2,1) {$\hat{T}_n$};
  
\draw (6,0) -- (8,0) -- (7,2) -- (6,0);
\draw (7,2) -- (7,3);

\node at (7,0.8) {$S_n$};

\node[draw, red, circle,scale=.3, fill] () at (1.3,.0) {};
\node[draw, blue, circle,scale=.3, fill] () at (0.4,.0) {};
\node[draw, blue, circle,scale=.3, fill] () at (2.2,.0) {};

\node[draw, red, circle,scale=.3, fill] () at (6.1,.0) {};
\node[draw, red, circle,scale=.3, fill] () at  (7,.0) {};
\node[draw, blue, circle,scale=.3, fill] () at (7.9,.0) {};
  
\node[draw, circle,scale=.3, fill] (gadget1) at (3.5,3) {};
\node[draw, circle,scale=.3, fill] (gadget2) at (4.5,3) {};
\draw (gadget1) -- (gadget2);
    
\draw[dotted] (Ttop) -- (gadget1);
\draw[dotted] (Stop) -- (gadget2);
    
\draw(gadget1) -- (3.5,4);
\node[draw, yellow, circle,scale=.4, fill] (leaf1) at (3.5,4) {};
\node[draw, green, circle,scale=.4, fill] (leaf1) at (4.5,4) {};
\draw (gadget2) -- (leaf1);
\node[draw, blue, circle,scale=.4, fill] (Stop) at (7,3) {};
     
\end{tikzpicture}
\end{subfigure}
\caption{A first approximation of $T_{n+1}$ on the left, and $S_{n+1}$ on the right. All dotted lines are long non-branching paths.}
\label{sketchfig1}
\end{figure}

{\it Algorithm Stage One:} Suppose now that we have already constructed $T_n$ and $S_n$ and wish to construct $T_{n+1}$ and $S_{n+1}$. Suppose further that we are given a vertex $v$ of $T_n$ for which we wish to find a partner $w$ in $S_{n+1}$ so that $T - v$ and $S - w$ are isomorphic. We begin by building a tree $\hat{T}_n \not \cong T_n$ which has some vertex $w$ such that $T_n - v \cong \hat{T}_n - w$. This can be done by taking the components of $T_n - v$ and arranging them suitably around the new vertex $w$.

We will take $S_{n+1}$ to include $S_n$ and $\hat{T}_n$, with the copies of red and blue leaves in $\hat{T}_n$ also coloured red and blue respectively. As indicated on the right in Figure \ref{sketchfig1}, we add long non-branching paths to some blue leaf $b$ of $S_n$ and to some red leaf $\hat{r}$ of $\hat{T}_n$ and join these paths at their other endpoints by some edge $e_n$. We also join two new leaves $y$ and $g$ to the endvertices of $e_n$. We colour the leaf $y$ yellow and the leaf $g$ green. To ensure that $T_{n+1} - v \cong S_{n+1} - w$, we take $T_{n+1}$ to include $T_n$ together with a copy $\hat{S}_n$ of $S_n$, with its leaves coloured appropriately, and joined up in the same way, as indicated on the left in Figure \ref{sketchfig1}. Note that, whilst $\hat{S}_n$ and $S_n$ are isomorphic as graphs, we make a distinction as we want to lift the partial isomorphisms between $T_n - v_i \cong S_n - w_i$ to these new graphs, and our notation aims to emphasize the natural inclusions $T_n \subset T_{n+1}$ and $S_n \subset S_{n+1}$.

{\it Algorithm Stage Two:} We now have committed ourselves to two targets which are seemingly irreconcilable: first, we promised to extend in the same way at each red or blue leaf of $T_n$ and $S_n$, but we also need that $T_{n+1} - v \cong S_{n+1} - w$. The solution is to copy the same subgraph appearing beyond $r$ in Fig.~\ref{sketchfig1}, including its coloured leaves,  onto all the other red leaves of $S_n$ and $T_n$. Similarly we copy the subgraph appearing beyond the blue leaf $b$ of $S_n$ onto all other blue leaves of $S_n$ and $T_n$. In doing so, we create new red and blue leaves, and we will keep adding, step by step, further copies of the graphs appearing beyond $r$ and $b$ in Fig.~\ref{sketchfig1} respectively onto all red and blue leaves of everything we have constructed so far.

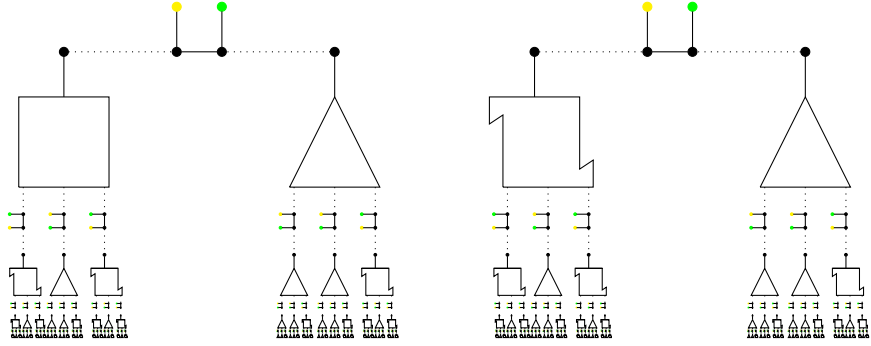
\begin{figure}[ht!]
\begin{subfigure}[t]{0.5\textwidth}
\centering
\begin{tikzpicture}[x=.5cm,y=.5cm, scale=1.2, every node/.style={transform shape}]
\node[draw, circle,scale=.3, fill] (Ttop) at (1,3) {};

\node[draw, circle,scale=.3, fill] (Stop) at (7,3) {};
  
\draw (0,0) -- (2,0) -- (2,2) -- (0,2) -- (0,0);
\draw (1,2) -- (1,3);

\draw (6,0) -- (8,0) -- (7,2) -- (6,0);
\draw (7,2) -- (7,3);

\pic[scale=0.3, every node/.style={transform shape}] at (0.1,0) {decdecsquare};
\pic[scale=0.3, every node/.style={transform shape}] at (1,0) {decdectriangle};
\pic[scale=0.3, every node/.style={transform shape}] at (1.9,0) {decdecsquare};

\pic[scale=0.3, every node/.style={transform shape}] at (6.1,0) {decdectriangle};
\pic[scale=0.3, every node/.style={transform shape}] at (7,0) {decdectriangle};
\pic[scale=0.3, every node/.style={transform shape}] at (7.9,0) {decdecsquare};
  
\node[draw, circle,scale=.3, fill] (gadget1) at (3.5,3) {};
\node[draw, circle,scale=.3, fill] (gadget2) at (4.5,3) {};
\draw (gadget1) -- (gadget2);
    
\draw[dotted] (Ttop) -- (gadget1);
\draw[dotted] (Stop) -- (gadget2);
    
\draw(gadget1) -- (3.5,4);
\node[draw, yellow, circle,scale=.3, fill] (leaf1) at (3.5,4) {};
\node[draw, green, circle,scale=.3, fill] (leaf1) at (4.5,4) {};
\draw (gadget2) -- (leaf1);
\end{tikzpicture}

\end{subfigure}%
\begin{subfigure}[t]{0.5\textwidth}
\centering
\begin{tikzpicture}[x=.5cm,y=.5cm, scale=1.2, every node/.style={transform shape}]

\node[draw, circle,scale=.3, fill] (Ttop) at (1,3) {};
;

\node[draw, circle,scale=.3, fill] (Stop) at (7,3) {};
  
\draw (0.3,0) -- (2.3,0) -- (2.3,0.6) -- (2,0.4)-- (2,2) -- (0,2) -- (0,1.4)--(0.3,1.6)--(0.3,0);
\draw (1,2) -- (1,3);

\draw (6,0) -- (8,0) -- (7,2) -- (6,0);
\draw (7,2) -- (7,3);

\pic[scale=0.3, every node/.style={transform shape}] at (0.4,0) {decdecsquare};
\pic[scale=0.3, every node/.style={transform shape}] at (1.3,0) {decdectriangle};
\pic[scale=0.3, every node/.style={transform shape}] at (2.2,0) {decdecsquare};

\pic[scale=0.3, every node/.style={transform shape}] at (6.1,0) {decdectriangle};
\pic[scale=0.3, every node/.style={transform shape}] at (7,0) {decdectriangle};
\pic[scale=0.3, every node/.style={transform shape}] at (7.9,0) {decdecsquare};

\node[draw, circle,scale=.3, fill] (gadget1) at (3.5,3) {};
\node[draw, circle,scale=.3, fill] (gadget2) at (4.5,3) {};
\draw (gadget1) -- (gadget2);
    
\draw[dotted] (Ttop) -- (gadget1);
\draw[dotted] (Stop) -- (gadget2);
    
\draw(gadget1) -- (3.5,4);
\node[draw, yellow, circle,scale=.3, fill] (leaf1) at (3.5,4) {};
\node[draw, green, circle,scale=.3, fill] (leaf1) at (4.5,4) {};
\draw (gadget2) -- (leaf1);

\end{tikzpicture}

\end{subfigure}
\caption{A sketch of $T_{n+1}$ and $S_{n+1}$ after countably many steps.}
\label{sketchfig2}
\end{figure}

After countably many steps we have dealt with all red and blue leaves, and it can be checked that both our targets are achieved. We take these new trees to be $S_{n+1}$ and $T_{n+1}$. They are non-isomorphic, as after removing all long non-branching paths, $T_{n+1}$ contains $T_n$ as a component, whereas $S_{n+1}$ does not.

\subsection{The one-ended case}
\label{s:1end}
To construct a one-ended non-reconstructible graph, we initially follow the same strategy as in the tree case and build locally finite graphs $G_n$ and $H_n$ and some partial hypomorphisms between them. Simultaneously, however, we will also build one-ended locally finite graphs of a grid-like form $F_n \times \N$ (the Cartesian product of a locally finite tree $F_n$ with a ray) which share certain symmetries with $G_n$ and $H_n$. These will allow us to glue $F_n \times \N$ onto both $G_n$ and $H_n$, in order to make them one-ended, without spoiling the partial hypomorphisms. Let us illustrate this idea by explicitly describing the first few steps of the construction.

We start with two non-isomorphic finite graphs $G_0$ and $H_0$, such that $G_0$ and $H_0$ each have exactly one red and one blue leaf. After stage one of our algorithm, our approximations to $G_1$ and $H_1$ as in Figure~\ref{sketchfig1} contain, in each of $G_0$, $\hat{H}_0$, $\hat{G}_0$ and $H_0$, one coloured leaf. In stage two, we add copies of these graphs recursively. It follows that the resulting graphs $G'_1$ and $H'_1$ have the global structure of a double ray, along which parts corresponding to copies of $G_0$, $\hat{H}_0$, $\hat{G}_0$ and $H_0$ appear in a repeating pattern. Crucially, however, each graph $G'_1$ and $H'_1$ has infinitely many yellow and green leaves, which appear in an alternating pattern extending to infinity in both directions along the double ray.

Consider the minor $F_1$ of $G'_1$ obtained by collapsing every subgraph corresponding to $G_0$, $\hat{H}_0$, $\hat{G}_0$ and $H_0$ to a single point. Write $\psi_G \colon G'_1 \to F_1$ for the quotient map. Then $F_1$ is a double ray with alternating coloured leaves hanging off it. Note that we could have started with $H'_1$ and obtained the same $F_1$. In other words, $F_1$ approximates the global structures of both $G'_1$ and $H'_1$. Consider the one-ended grid-like graph $F_1 \times \N$, where we let $F_1 \times \singleton{0}$ inherit the colours from $F_1$. We now form $G_1$ and $H_1$ by gluing $F_1 \times \N$ onto $G'_1$, by identifying corresponding coloured vertices $y$ and $\psi_G(y)$, and similarly for $H'_1$.\footnote{For technical reasons, in the actual construction we identify $\psi_G(y)$ with the corresponding base vertex of the leaf $y$ in $G'_1$. In this way the coloured leaves of $G'_1$ remain leaves, and we can continue our recursive construction.} Since the coloured leaves contained both ends of our graphs in their closure, the graphs $G_1$ and $H_1$ are now one-ended.

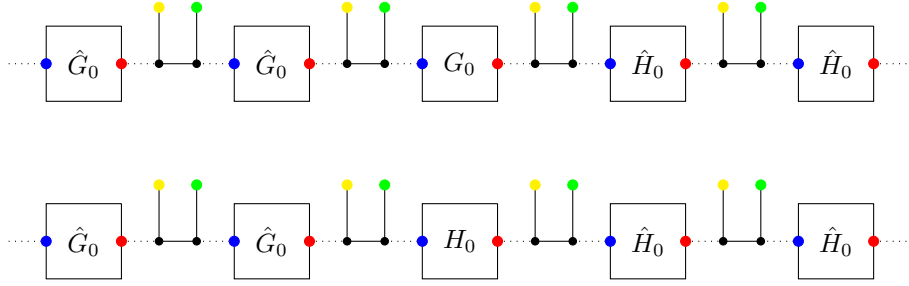
\begin{figure}
\begin{tikzpicture}[scale=0.5]

\draw (-5,0) -- (-4,0) (0,0)--(1,0)  (5,0)--(6,0) (-9,0)--(-10,0);

\draw[dotted] (-14,0)--(-13,0) (-11,0)--(-10,0) (-9,0) -- (-8,0) (-6,0) -- (-5,0)  (-4,0)--(-3,0)    (-1,0)--(0,0) (1,0)--(2,0)  (4,0)--(5,0) (6,0)--(7,0) (9,0)--(10,0);

\draw (-5,0) -- (-5,1.5)  (-4,0) -- (-4,1.5)  (0,0) -- (0,1.5)  (1,0) -- (1,1.5) (5,0) -- (5,1.5)  (6,0) -- (6,1.5) (-9,0)--(-9,1.5) (-10,0)--(-10,1.5);

\draw (-1,1) -- (-3,1) -- (-3,-1) -- (-1,-1) -- (-1,1);

\draw (-6,1) -- (-8,1) -- (-8,-1) -- (-6,-1) -- (-6,1);

\draw (4,1) -- (2,1) -- (2,-1) -- (4,-1) -- (4,1);

\draw (9,1) -- (7,1) -- (7,-1) -- (9,-1) -- (9,1);

\draw (-11,1) -- (-13,1) -- (-13,-1) -- (-11,-1) -- (-11,1);
\node[draw, circle,scale=.3, fill]  at (0,0) {};
\node[draw, circle,scale=.3, fill]  at (1,0) {};
\node[draw, blue, circle,scale=.4, fill]  at (2,0) {};
\node[draw,red, circle,scale=.4, fill]  at (4,0) {};
\node[draw, circle,scale=.3, fill]  at (5,0) {};
\node[draw, circle,scale=.3, fill]  at (6,0) {};
\node[draw,blue, circle,scale=.4, fill]  at (7,0) {};
\node[draw,red, circle,scale=.4, fill]  at (9,0) {};
\node[draw,red, circle,scale=.4, fill]  at (-1,0) {};
\node[draw, blue, circle,scale=.4, fill]  at (-3,0) {};
\node[draw, circle,scale=.3, fill]  at (-4,0) {};
\node[draw, circle,scale=.3, fill]  at (-5,0) {};
\node[draw, red, circle,scale=.4, fill]  at (-6,0) {};
\node[draw,blue, circle,scale=.4, fill]  at (-8,0) {};
\node[draw, circle,scale=.3, fill]  at (-9,0) {};
\node[draw, circle,scale=.3, fill]  at (-10,0) {};
\node[draw, red, circle,scale=.4, fill]  at (-11,0) {};
\node[draw,blue, circle,scale=.4, fill]  at (-13,0) {};

\node[draw, yellow, circle,scale=.4, fill]  at (0,1.5) {};
\node[draw, green, circle,scale=.4, fill]  at (1,1.5) {};

\node[draw, yellow, circle,scale=.4, fill] at (-5,1.5) {};
\node[draw, green, circle,scale=.4, fill]  at (-4,1.5) {};

\node[draw, yellow, circle,scale=.4, fill]  at (5,1.5) {};
\node[draw, green, circle,scale=.4, fill]  at (6,1.5) {};

\node[draw, yellow, circle,scale=.4, fill]  at (-10,1.5) {};
\node[draw, green, circle,scale=.4, fill]  at (-9,1.5) {};

\node at (-2,0) {$G_0$};

\node at (-7,0) {$\hat{G}_0$};

\node at (-12,0) {$\hat{G}_0$};

\node at (3,0) {$\hat{H}_0$};

\node at (8,0) {$\hat{H}_0$};

\end{tikzpicture}
\ \\
\vspace{1cm}
\begin{tikzpicture}[scale=0.5]

\draw (-5,0) -- (-4,0) (0,0)--(1,0)  (5,0)--(6,0) (-9,0)--(-10,0);

\draw[dotted] (-14,0)--(-13,0) (-11,0)--(-10,0) (-9,0) -- (-8,0) (-6,0) -- (-5,0)  (-4,0)--(-3,0)    (-1,0)--(0,0) (1,0)--(2,0)  (4,0)--(5,0) (6,0)--(7,0) (9,0)--(10,0);

\draw (-5,0) -- (-5,1.5)  (-4,0) -- (-4,1.5)  (0,0) -- (0,1.5)  (1,0) -- (1,1.5) (5,0) -- (5,1.5)  (6,0) -- (6,1.5) (-9,0)--(-9,1.5) (-10,0)--(-10,1.5);

\draw (-1,1) -- (-3,1) -- (-3,-1) -- (-1,-1) -- (-1,1);

\draw (-6,1) -- (-8,1) -- (-8,-1) -- (-6,-1) -- (-6,1);

\draw (4,1) -- (2,1) -- (2,-1) -- (4,-1) -- (4,1);

\draw (9,1) -- (7,1) -- (7,-1) -- (9,-1) -- (9,1);

\draw (-11,1) -- (-13,1) -- (-13,-1) -- (-11,-1) -- (-11,1);
\node[draw, circle,scale=.3, fill]  at (0,0) {};
\node[draw, circle,scale=.3, fill]  at (1,0) {};
\node[draw, blue, circle,scale=.4, fill]  at (2,0) {};
\node[draw,red, circle,scale=.4, fill]  at (4,0) {};
\node[draw, circle,scale=.3, fill]  at (5,0) {};
\node[draw, circle,scale=.3, fill]  at (6,0) {};
\node[draw,blue, circle,scale=.4, fill]  at (7,0) {};
\node[draw,red, circle,scale=.4, fill]  at (9,0) {};
\node[draw,red, circle,scale=.4, fill]  at (-1,0) {};
\node[draw, blue, circle,scale=.4, fill]  at (-3,0) {};
\node[draw, circle,scale=.3, fill]  at (-4,0) {};
\node[draw, circle,scale=.3, fill]  at (-5,0) {};
\node[draw, red, circle,scale=.4, fill]  at (-6,0) {};
\node[draw,blue, circle,scale=.4, fill]  at (-8,0) {};
\node[draw, circle,scale=.3, fill]  at (-9,0) {};
\node[draw, circle,scale=.3, fill]  at (-10,0) {};
\node[draw, red, circle,scale=.4, fill]  at (-11,0) {};
\node[draw,blue, circle,scale=.4, fill]  at (-13,0) {};

\node at (-2,0) {$H_0$};

\node at (-7,0) {$\hat{G}_0$};

\node at (-12,0) {$\hat{G}_0$};

\node at (3,0) {$\hat{H}_0$};

\node at (8,0) {$\hat{H}_0$};

\node[draw, yellow, circle,scale=.4, fill]  at (0,1.5) {};
\node[draw, green, circle,scale=.4, fill]  at (1,1.5) {};

\node[draw, yellow, circle,scale=.4, fill] at (-5,1.5) {};
\node[draw, green, circle,scale=.4, fill]  at (-4,1.5) {};

\node[draw, yellow, circle,scale=.4, fill]  at (5,1.5) {};
\node[draw, green, circle,scale=.4, fill]  at (6,1.5) {};

\node[draw, yellow, circle,scale=.4, fill]  at (-10,1.5) {};
\node[draw, green, circle,scale=.4, fill]  at (-9,1.5) {};

\end{tikzpicture}

\caption{A sketch of $G'_1$ (above) and $H'_1$ (below).}
\label{G2}
\end{figure}

It remains to check that our partial isomorphism $h_1 \colon G'_1-v_1 \to H'_1-w_1$ guaranteed by step two can be extended to $G_1-v_1 \to H_1-w_1$. This can be done essentially because of the following property: let us write $\script{L}(\cdot)$ for the set of coloured leaves. It can be checked that there is an automorphism $\pi_1 \colon F_1 \to F_1$ such that the diagram 
\begin{center}
\begin{tikzpicture}[scale=1.5]
  \matrix (m)
    [
      matrix of math nodes,
      row sep    = 3em,
      column sep = 7em
    ]
    {
      \script{L}(G'_1)              & \script{L}(H'_1) \\
      \script{L}(F_1) &  \script{L}(F_1)           \\
    };
  \path
    (m-1-1) edge [->] node [left] {\scriptsize{$\psi_G$}} (m-2-1)
    (m-1-1.east |- m-1-2)
      edge [->] node [above] {\scriptsize{$h_1 \restriction \script{L}(G'_1)$}} (m-1-2);
        \path
     (m-2-1) edge [->] node [above] {\scriptsize{$\pi_1  \restriction \script{L}(F_1)$}} (m-2-2);
        \path
     (m-1-2) edge [->] node [right] {\scriptsize{$\psi_H$}} (m-2-2);
\end{tikzpicture}
\end{center}
is colour-preserving and commutes. Hence, $\pi_1 \times \operatorname{id}$ is an automorphism of $F_1 \times \N$ which is compatible with our gluing procedure, so it can be combined with $h_1$ to give us the desired isomorphism.

We are now ready to describe the general step. Instead of describing $F_n$ as a minor of $G_n$, which no longer works na\"{\i}vely at later steps, we will directly build $F_n$ by recursion, so that it satisfies the properties of the above diagram.

Suppose at step $n$ we have constructed locally finite graphs $G_n$ and $H_n$, and also a locally finite tree $F_n$ where some leaves are coloured in one of two colours. Furthermore, suppose we have a family of isomorphisms 
\[
\script{H}_n = \set{h_x \colon G_n - x \to H_n - \varphi(x)}:{x \in X_n},
\]
for some subset $X_n \subset V(G_n)$, a family of isomorphisms $\Pi_n = \set{\pi_x \colon F_n \to F_n}:{x \in X_n}$, and colour-preserving bijections $\psi_{G_n} \colon \script{L}(G_n) \to \script{L}(F_n)$ and $\psi_{H_n} \colon \script{L}(H_n) \to \script{L}(F_n)$ such that the corresponding commutative diagram from above holds for each $x$. We construct $G'_{n+1}$ and $H'_{n+1}$ according to stages one and two of the previous algorithm. As before our isomorphisms $h_x$ will lift to isomorphisms between $G'_{n+1} - x$ and $H'_{n+1} -\varphi(x)$.

\begin{figure}[ht!]
\begin{tikzpicture}[scale=0.6]

\node[draw, red, circle,scale=.4, fill] (Ttop) at (1,3) {};
\node[text=red] at (1,3.5) {$\psi_{G_n}(r)$};
\node[text=blue] at (7,3.5) {$\psi_{H_n}(b)$};

\node[draw, blue, circle,scale=.3, fill] (Stop) at (7,3) {};
  
\draw (0,0) -- (2,0) -- (2,2) -- (0,2) -- (0,0);
\draw (1,2) -- (1,3);

\node at (1,1) {$F^G_n$};
  
\draw (6,0) -- (8,0) -- (8,2) -- (6,2) -- (6,0);
\draw (7,2) -- (7,3);

\node at (7,1) {$F^H_n$};

\node[draw, red, circle,scale=.3, fill] () at (1,.0) {};
\node[draw, blue, circle,scale=.3, fill] () at (0.1,.0) {};
\node[draw, blue, circle,scale=.3, fill] () at (1.9,.0) {};

\node[draw, red, circle,scale=.3, fill] () at (6.1,.0) {};
\node[draw, red, circle,scale=.3, fill] () at  (7,.0) {};
\node[draw, blue, circle,scale=.3, fill] () at (7.9,.0) {};
  
\node[draw, circle,scale=.3, fill] (gadget1) at (3.5,3) {};
\node[draw, circle,scale=.3, fill] (gadget2) at (4.5,3) {};
\draw (gadget1) -- (gadget2);
    
\draw (Ttop) -- (gadget1);
\draw (Stop) -- (gadget2);
\draw(gadget1) -- (3.5,4);
\node[draw, yellow, circle,scale=.4, fill] (leaf1) at (3.5,4) {};
\node[draw, green, circle,scale=.4, fill] (leaf1) at (4.5,4) {};
\draw (gadget2) -- (leaf1);
\node[draw, red, circle,scale=.4, fill] (Ttop) at (1,3) {};
\end{tikzpicture}

\caption{The auxiliary graph $\tilde{F}_n$.}
\label{sketch_aux}
\end{figure}
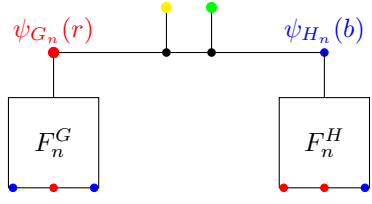

{\it Algorithm Stage Three.} As indicated in Figure~\ref{sketch_aux}, we take two copies $F_n^G$ and $F_n^H$ of $F_n$, and glue them together mimicking stage one of the algorithm, i.e.\ connect $\psi_{G_n}(r)$ in $F_n^G$ by a path of length three to $\psi_{H_n}(b)$ in $F_n^H$, and attach two new leaves coloured yellow and green in the middle of the path.  Call the resulting graph $\tilde{F}_n$. We then apply stage two of the algorithm to this graph, gluing again and again onto every blue vertex a copy of the graph of $\tilde{F}_n$ behind $\psi_{H_n}(b)$, and similarly for every red leaf, to obtain a tree $F_{n+1}$. Since this procedure is, in structural terms, so similar to the construction of $G'_{n+1}$ and $H'_{n+1}$, it can be shown that we do obtain a colour-preserving commuting diagram of the form

\begin{center}
\begin{tikzpicture}
  \matrix (m)
    [
      matrix of math nodes,
      row sep    = 3em,
      column sep = 7em
    ]
    {
      \script{L}(G'_{n+1})              & \script{L}(H'_{n+1}) \\
      \script{L}(F_{n+1}) &  \script{L}(F_{n+1})           \\
    };
  \path
    (m-1-1) edge [->] node [left] {\scriptsize{$\psi_{G_{n+1}}$}} (m-2-1)
    (m-1-1.east |- m-1-2)
      edge [->] node [above] {\scriptsize{$h_x \restriction \script{L}(G'_{n+1})$}} (m-1-2);
        \path
     (m-2-1) edge [->] node [above] {\scriptsize{$\pi_x  \restriction \script{L}(F_{n+1})$}} (m-2-2);
        \path
     (m-1-2) edge [->] node [right] {\scriptsize{$\psi_{H_{n+1}}$}} (m-2-2);
\end{tikzpicture}
\end{center}

As before, this means that we can indeed glue together $G'_{n+1}$ and $F_{n+1} \times \N$, and $H'_{n+1}$ and $F_{n+1} \times \N$ to obtain one-ended graphs $G_{n+1}$ and $H_{n+1}$ as desired.

At the end of our construction, after countably many steps, we have built two graphs $G$ and $H$ which are hypomorphic, and for the same reasons as in the tree case the two graphs will not be isomorphic. Further, since all $G_n$ and $H_n$ are one-ended, so will be $G$ and $H$.

\subsection{The countably-ended case} In order to produce hypomorphic graphs with countably many ends we follow the same procedure as for the one-ended case, except that we start with one-ended (non-isomorphic) graphs $G_0$ and $H_0$.

After the first and second stage of our algorithm, the resulting graphs $G'_1$ and $H'_1$ will again consist of infinitely many copies of $G_0$ and $H_0$ glued together along a double ray. After gluing $F_1 \times \mathbb{N}$ to these graphs as before, we obtain graphs with one thick end, with many coloured leaves tending to that end, as well as infinitely many thin ends, coming from the copies of $G_0$ and $H_0$, each of which contained a ray. These thin ends will eventually be rays, and so have no coloured leaves tending towards them. This guarantees that in the next step, when we glue $F_2 \times \mathbb{N}$ onto $G'_2$ and $H'_2$, the thin ends will not be affected, and that all the other ends in the graph will be amalgamated into one thick end.

Then, in each stage of the construction, the graphs $G_n$ and $H_n$ will have exactly one thick end, again with many coloured leaves tending towards it, and infinitely many thin ends each of which is eventually a ray. This property lifts to the graphs $G$ and $H$ constructed in the limit: they will have one thick end and infinitely many ends which are eventually rays. However, since $G$ and $H$ are countable, there can only be countably many of these rays. Hence the two graphs $G$ and $H$ have countably many ends in total, and as before they will be hypomorphic but not isomorphic.

\section{Closure with respect to promises}\label{s:closure}

\label{sec:defpromiseclosure}
A \emph{bridge} in a graph $G$ is an edge $e=\{x,y\}$ such that $x$ and $y$ lie in different components of $G-e$. Given a directed bridge $\vec{e}=\vec{xy}$ in some graph $G=(V,E)$, we denote by $G(\vec{e})$ the unique component of $G-e$ containing the vertex $y$. We think of $G(\vec{e})$ as a rooted graph with root $y$.

\begin{defn}[Promise structure]
A \emph{promise structure} $\script{P}=\p{G,\vec{P},\script{L}}$ is a triple consisting of: 
\begin{itemize}
\item a graph $G$,
\item $\vec{P}=\set{\vec{p}_i}:{i \in I}$ a set of directed bridges $\vec{P} \subset\vec{E}(G)$, and
\item $\script{L}=\set{L_i}:{i \in I}$ a set of pairwise disjoint sets of leaves of $G$.
\end{itemize}
We insist further that, if the component $G(\vec{p_i})$ consists of a single leaf $c \in L_j$, then $i=j$.
\end{defn}
Often, when the context is clear, we will not make a distinction between $\script{L}$ and the set $\bigcup_i L_i$, for notational convenience. 

We call an edge $\vec{p}_i \in \vec{P}$ a \emph{promise edge}, and leaves $\ell \in L_i$ \emph{promise leaves}. A promise edge $\vec{p_i} \in \vec{P}$ is called a \emph{placeholder-promise} if the component $G(\vec{p_i})$ consists of a single leaf $c \in L_i$, which we call a \emph{placeholder-leaf}. We write 
\[
\script{L}_p =\set{ L_i}:{\vec{p_i} \text{ a placeholder-promise}} \text{    and    } \script{L}_q = \script{L} \setminus \script{L}_p.
\]

Given a leaf $\ell$ in $G$, there is a unique edge $q_{\ell} \in E(G)$ incident with $\ell$, and this edge has a natural orientation $\vec{q_{\ell}}$ towards $\ell$. Informally, we think of $\ell \in L_i$ as the `promise' that if we extend $G$ to a graph $H \supset G$, we will do so in such a way that $H(\vec{q_{\ell}}) \cong H(\vec{p_i})$.

\begin{defn}[Leaf extension]
Given an inclusion $H\supseteq G$ of graphs and a set $L$ of leaves of $G$, $H$ is called a \emph{leaf extension}, or more specifically an \emph{$L$-extension, of $G$}, if:
\begin{itemize}
\item every component of $H$ contains precisely one component of $G$, and
\item every component of $H - G$ is adjacent to a unique vertex $\ell$ of $G$, and we have $\ell \in L$.
\end{itemize}
\end{defn}

In \cite{BEHLP17}, given a promise structure $\script{P}=\p{G,\vec{P},\script{L}}$, it is shown how to construct a graph $\cl(G) \supset G$ which has the following properties.

\begin{prop}[Closure w.r.t a promise structure, cf.~{\cite[Proposition 3.3]{BEHLP17}}]\label{p:closure}
Let $G$ be a graph and let $\script{P} = \p{G,\vec{P},\script{L}}$ be a promise structure. Then there is a graph $\cl(G)$, called the \emph{closure of $G$ with respect to $\script{P}$}, such that:
\begin{enumerate}[series=properties,label={\upshape(cl.\arabic{*})}]
\item\label{i:extension} $\cl(G)$ is an $\script{L}_q$-extension of $G$, 
\item\label{i:keepspromises}  for every $\vec{p}_i \in \vec{P}$ and all $\ell \in L_i$, $$\cl(G)(\vec{p}_i) \cong \cl(G)(\vec{q}_{\ell})$$ are isomorphic as rooted graphs.
\end{enumerate}
\end{prop}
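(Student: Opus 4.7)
The construction of $\cl(G)$ proceeds by recursion: at every non-placeholder promise leaf we glue a fresh rooted copy of the promised subgraph, and then iterate, because the fresh copies will themselves contain promise leaves demanding the same treatment. Concretely, write $y_i$ for the head of the promise edge $\vec{p_i} = \vec{x_iy_i}$ and, for each non-placeholder $\vec{p_i}$, set $R_i := G(\vec{p_i})$, viewed as a rooted graph with root $y_i$. Put $G_0 := G$ and $A_0 := \bigcup \script{L}_q$. Given $G_n$ together with a set $A_n \subseteq V(G_n)$ of \emph{active} leaves, form $G_{n+1}$ from $G_n$ by taking, for each $\ell \in A_n$ with $\ell \in L_i$, a fresh rooted copy $\p{R_i^{(\ell)},\, y_i^{(\ell)}}$ of $\p{R_i,\, y_i}$, forming the disjoint union with $G_n$, and identifying $y_i^{(\ell)}$ with $\ell$; let $A_{n+1}$ consist of the images, inside the new copies, of all non-placeholder promise leaves of the corresponding $R_i$. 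Finally set $\cl(G) := \bigcup_n G_n$.

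Property \ref{i:extension} is then immediate, since every component of $\cl(G) - G$ is a tower of freshly attached copies based at a unique $\ell \in A_0 \subseteq \bigcup \script{L}_q$. For \ref{i:keepspromises}, the placeholder case is trivial: no leaf in a placeholder-$L_i$ is ever activated, so both $\cl(G)(\vec{q_\ell})$ and $\cl(G)(\vec{p_i})$ reduce to a single-vertex rooted graph. For a non-placeholder $\vec{p_i}$, the decisive observation is that the closure rule is \emph{uniform}: every non-placeholder leaf in every $L_j$ — whether originally in $G$ or arising inside some attached copy — is extended in exactly the same way, namely by gluing a fresh copy of $R_j$ and recursively closing.

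Granted this uniformity, the required rooted isomorphism $\phi_\ell \colon \cl(G)(\vec{q_\ell}) \to \cl(G)(\vec{p_i})$ is built level-by-level. On level $1$ take the canonical rooted isomorphism $R_i^{(\ell)} \to R_i$ sending $y_i^{(\ell)} \mapsto y_i$. Each non-placeholder promise leaf $v^{(\ell)} \in R_i^{(\ell)}$ corresponds under this bijection to a promise leaf $v \in L_j \cap V(R_i)$ of the same type, and beyond each of them the uniform rule has attached a fresh copy of $R_j$ together with all of its further recursive extensions. The induction hypothesis supplies rooted isomorphisms between these matching sub-extensions, and these fit together compatibly at the identified roots to yield the global isomorphism $\phi_\ell$.

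The main point to watch is that the recursion be unambiguous. Pairwise disjointness of the $L_j$ is what makes the rule ``attach a copy of $R_j$ at each $\ell \in L_j$'' well-defined at every promise leaf, and the technical proviso in the definition of a promise structure excludes the pathological case $G(\vec{p_i}) = \{c\}$ with $c \in L_j$ and $j \neq i$ — which would otherwise conflate ``vertex being attached'' with ``vertex at which attachment occurs''. Granted this, the construction is unambiguous and both listed properties follow as above.
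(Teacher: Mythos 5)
Your construction is the same as the paper's (which itself only sketches the argument and refers back to \cite{BEHLP17}): iteratively glue fresh rooted copies of $G(\vec{p}_i)$ onto promise leaves, tracking which leaves are newly created at each stage, and take the union. Restricting the active set to $\script{L}_q$ rather than all of $\script{L}$ is only a cosmetic difference, since gluing a single-vertex copy onto a placeholder leaf is a no-op, and your observation that the proviso in the definition of a promise structure prevents the root of an attached copy from itself being a promise leaf is exactly the point of that proviso. Properties \ref{i:extension} and \ref{i:keepspromises} are argued as the paper would. The one place to tighten: the phrase ``the induction hypothesis supplies rooted isomorphisms between these matching sub-extensions'' reads as circular if taken to mean \ref{i:keepspromises} applied to the deeper leaves $v$ and $v^{(\ell)}$. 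What you actually need (and what your ``level-by-level'' framing is gesturing at) is an ordinary induction on $n$ producing a compatible chain of partial rooted isomorphisms $\phi_\ell^{(n)} \colon \cl(G)(\vec{q}_\ell) \cap G_n \to \cl(G)(\vec{p}_i) \cap G_{n-1}$, with $\phi_\ell = \bigcup_n \phi_\ell^{(n)}$; stated that way the argument is complete and non-circular.
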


Since the existence of $\cl(G)$ is crucial to our proof, we briefly remind the reader how to construct such a graph. As a first approximation, in order to try to achieve (\ref{i:keepspromises}), we glue a copy of the component $G(\vec{p}_i)$ onto each leaf $\ell \in L_i$, for each $i \in I$. We call this the \emph{1-step extension $G^{(1)}$} of $G$. If there were no promise leaves in the component $G(\vec{p}_i)$, then the promises in $L_i$ would be satisfied. However, if there are, then we have grown $G(\vec{p}_i)$ by adding copies of various $G(\vec{p}_j)$s behind promise leaves appearing in $G(\vec{p}_i)$. 

However, remembering all promise leaves inside the newly added copies of $G(\vec{p}_i)$ we glued behind each $\ell \in L_i$, we continue this process indefinitely, growing the graph one step at a time by gluing copies of (the original) $G(\vec{p}_i)$ to promise leaves $\ell'$ which have appeared most recently as copies of $\ell \in L_i$. After a countable number of steps the resulting graph $\cl(G)$ satisfies Proposition \ref{p:closure}. We note also that the maximum degree of $\cl(G)$ equals that of $G$.

\begin{defn}[Promise-respecting map]
\label{promiserespectingmap}
Let $G$ be a graph, $\script{P} = \p{G,\vec{P},\script{L}}$ be a promise structure on $G$, and let $T_1$ and $T_2$ be two components of $G$.

Given $x \in T_1$ and $y \in T_2$, a bijection $\varphi \colon T_1 - x \rightarrow T_2 - y$ is \emph{$\vec{P}$-respecting} (with respect to $\script{P}$) if the image of $L_i \cap T_1$ under $\varphi$ is $L_i \cap T_2$ for all $i$.
\end{defn}

We can think of $\script{P}$ as defining a $|\vec{P}|$-colouring on some sets of leaves. Then a mapping is $\vec{P}$-respecting if it preserves leaf colours.

Suppose that $\vec{p_i}$ is a placeholder promise, and $G = H^{(0)} \subseteq H^{(1)} \subseteq \cdots$ is the sequence of $1$-step extensions whose direct limit is $\cl(G)$. Then, if we denote by $L^{(n)}_i$ the set of promise leaves associated with $\vec{p_i}$ in $H^{(n)}$, it follows that $L_i^{(n)} \supseteq L_{i}^{(n-1)}$ since $G(\vec{p_i})$ is just a single vertex $c_i \in L_i$. For every placeholder promise $\vec{p}_i \in \vec{P}$, we define $\cl(L_i)= \bigcup_n L_i^{(n)}$.

\begin{defn}[Closure of a promise structure]
\label{def_closurestructure} The \emph{closure} of the promise structure $\p{G, \vec{P}, \script{L}}$ is the promise structure $\cl(\script{P}) = \p{\cl(G), \cl(\vec{P}),\cl(\script{L})}$, where:
\begin{itemize}
\item $\cl(\vec{P})=\set{\vec{p}_i}:{\text{$\vec{p_i} \in \vec{P}$ is a placeholder-promise}}$, 
\item $\cl(\script{L}) = \{\cl(L_i) \colon \text{$\vec{p_i} \in \vec{P}$ is a placeholder-promise}\}$.
\end{itemize}
\end{defn}

\begin{prop}[{\cite[Proposition 3.3]{BEHLP17}}]
\label{p:alfnbadjklfng}
Let $G$ be a graph and let $\p{G,\vec{P},\script{L}}$ be a promise structure. Then $\cl(G)$ satisfies:
\begin{enumerate}[resume=properties,label={\upshape(cl.\arabic{*})}]
\item\label{i:keepslabelledpromises}  for every $\vec{p_i} \in \vec{P}$ and every $\ell \in L_i$, $$\cl(G)(\vec{p_i}) \cong \cl(G)(\vec{q_{\ell}})$$ are isomorphic as rooted graphs, and this isomorphism is $\cl(\vec{P})$-respecting with respect to $\cl(\script{P})$.
\end{enumerate}
\end{prop}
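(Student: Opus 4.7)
The plan is to refine the construction of the closure recalled above so that the isomorphism produced by Proposition~\ref{p:closure} is obtained as a direct limit of naturally label-preserving ``copy isomorphisms'' at each stage of the $1$-step extension chain; one then checks that this label-preservation is inherited in the limit and translates into the $\cl(\vec{P})$-respecting property for placeholder promises.

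Concretely, I would write $G = H^{(0)} \subseteq H^{(1)} \subseteq H^{(2)} \subseteq \cdots$ for the chain of $1$-step extensions whose union is $\cl(G)$, and for each $j \in I$ let $L_j^{(n)}$ denote the set of promise leaves associated with $\vec{p}_j$ in $H^{(n)}$. Fix $\vec{p}_i \in \vec{P}$ and $\ell \in L_i$. The goal is to build by induction on $n$ a chain of compatible rooted isomorphisms
\[
\varphi_n \colon H^{(n)}(\vec{p}_i) \to H^{(n)}(\vec{q}_{\ell})
\]
satisfying the strong label-preservation that $\varphi_n$ sends $L_j^{(n)} \cap H^{(n)}(\vec{p}_i)$ bijectively onto $L_j^{(n)} \cap H^{(n)}(\vec{q}_{\ell})$ for \emph{every} $j \in I$. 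The base case is the canonical rooted isomorphism from $G(\vec{p}_i)$ onto the copy of $G(\vec{p}_i)$ glued behind $\ell$ when forming $H^{(1)}$; it is label-preserving because the $1$-step extension, by definition, carries the original labels over to the freshly glued copies. For the inductive step, recall that $H^{(n+1)}$ is obtained from $H^{(n)}$ by gluing, for each $j$ and each promise leaf in $L_j^{(n)}$, a copy of $G(\vec{p}_j)$ behind it. Since $\varphi_n$ already pairs the $L_j^{(n)}$-labelled leaves on the two sides, I can extend $\varphi_n$ to $\varphi_{n+1}$ by sending each newly glued copy on the $\vec{p}_i$-side to the correspondingly glued copy on the $\vec{q}_{\ell}$-side via the canonical copy isomorphism, and the resulting $\varphi_{n+1}$ inherits the label-preservation from the base case.

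Taking $\varphi = \bigcup_n \varphi_n$ then yields the rooted isomorphism $\cl(G)(\vec{p}_i) \to \cl(G)(\vec{q}_{\ell})$. To derive that $\varphi$ is $\cl(\vec{P})$-respecting with respect to $\cl(\script{P})$, fix a placeholder promise $\vec{p}_j$; every $c \in \cl(L_j)$ lies in some $L_j^{(n)}$ by Definition~\ref{def_closurestructure}, so $\varphi(c) = \varphi_n(c) \in L_j^{(n)} \subseteq \cl(L_j)$ by the inductive property, and applying the same argument to $\varphi^{-1}$ delivers the desired bijection between $\cl(L_j) \cap \cl(G)(\vec{p}_i)$ and $\cl(L_j) \cap \cl(G)(\vec{q}_{\ell})$. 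The only subtlety, and the step that I expect to need the most careful bookkeeping, is the simultaneous label-preservation across \emph{all} indices $j$ at each finite stage; this hinges on the observation that when a copy of $G(\vec{p}_k)$ is glued behind a leaf in $L_k^{(n)}$, every promise leaf of label $j$ occurring inside that copy is itself added to $L_j^{(n+1)}$, which is precisely the recursive definition of the $L_j^{(n)}$'s used to form $\cl(L_j)$.
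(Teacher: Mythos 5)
Your core strategy---recursively building label-preserving rooted isomorphisms along the chain $G = H^{(0)} \subseteq H^{(1)} \subseteq \cdots$ of $1$-step extensions, and passing to the direct limit---is the right one and closely parallels the style of argument the paper itself uses in Lemma~\ref{l:blabla}. Note, though, that the paper does not reprove this proposition here (it is imported from \cite{BEHLP17}), so the comparison is against the argument implicit in the recalled construction of $\cl(G)$.

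There is, however, a genuine bookkeeping error in the displayed specification of the stage maps. For a non-placeholder promise $\vec{p}_i$ and $\ell \in L_i$, the rooted graph $H^{(n)}(\vec{q}_{\ell})$ trails $H^{(n)}(\vec{p}_i)$ by exactly one round of gluing: $H^{(0)}(\vec{q}_{\ell})$ is the single vertex $\ell$, and more generally $H^{(n)}(\vec{q}_{\ell}) \cong H^{(n-1)}(\vec{p}_i)$ as rooted labelled graphs. Consequently the maps you announce, $\varphi_n \colon H^{(n)}(\vec{p}_i) \to H^{(n)}(\vec{q}_{\ell})$, do not exist at finite stages (the two sides have different sizes for $n \geq 1$). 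Your own description of the base case already betrays the required fix: it is a map from $G(\vec{p}_i) = H^{(0)}(\vec{p}_i)$ onto the copy glued behind $\ell$ in $H^{(1)}$, i.e.\ onto $H^{(1)}(\vec{q}_{\ell})$, not $H^{(0)}(\vec{q}_{\ell})$. Restate the chain with the offset $\varphi_n \colon H^{(n)}(\vec{p}_i) \to H^{(n+1)}(\vec{q}_{\ell})$, each extending the previous via the canonical copy isomorphisms on the pieces freshly glued at stage $n+1$, and the rest---simultaneous label preservation in all colours $j$, passage to $\cl(G)$, and the restriction to the sets $\cl(L_j) = \bigcup_n L_j^{(n)}$ for placeholder promises $\vec{p}_j$---goes through exactly as you describe.
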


It is precisely this property \ref{i:keepslabelledpromises} of the promise closure that will allow us to maintain partial hypomorphisms during our recursive construction.

The last two results of this section serve as preparation for growing $G_{n+1}$, $H_{n+1}$ and $F_{n+1}$ `in parallel', as outlined in the third stage of the algorithm in \S\ref{s:1end}. If $\script{L} =  \set{L_i}:{i \in I}$ and $\script{L'} =  \set{L'_i}:{i \in I}$, we say a map $\psi : \bigcup \script{L} \rightarrow \bigcup \script{L'}$ is \emph{colour-preserving} if  $\psi(L_i) \subseteq L'_i$ for every $i$.

\begin{lemma}\label{l:blabla}
Let $\p{G,\vec{P},\script{L}}$ and $\p{G',\vec{P'},\script{L'}}$ be promise structures, and let $G = H^{(0)} \subseteq H^{(1)} \subseteq \cdots$ and $G' = H'^{(0)} \subseteq H'^{(1)} \subseteq \cdots$ be 1-step extensions approximating their respective closures. 

Assume that $\vec{P}=\Set{\vec{p}_1,\ldots,\vec{p}_k}$ and $\vec{P'}=\Set{\vec{r}_1,\ldots,\vec{r}_k}$, and that there is a colour-preserving bijection
$$\psi \colon \bigcup\script{L} \to \bigcup\script{L}'$$
such that (recall that $\script{L}(\cdot)$ is the set of leaves of a graph that are in $\script{L}$)
$$\psi \restriction G(\vec{p}_i) \colon  \script{L}(G(\vec{p}_i)) \to  \script{L}'(G'(\vec{r}_i))$$
is still a colour-preserving bijection for all $\vec{p}_i \in \vec{P}$.

Then for each $i \leq k$ there is a sequence of colour-preserving bijections 
$$\alpha^i_n \colon \script{L}\p{H^{(n)}(\vec{p_i})} \to \script{L}'\p{H'^{(n)}(\vec{r_i})}$$
such that $\alpha^i_{n+1}$ extends $\alpha^i_n$.
\end{lemma}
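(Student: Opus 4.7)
The plan is to construct the family $\p{\alpha_n^i}$ by induction on $n$, simultaneously for all $i \leq k$, exploiting the recursive description of the $1$-step extensions. For the base case $n=0$, I would simply set $\alpha_0^i := \psi \restriction \script{L}\p{G(\vec{p}_i)}$; by hypothesis this is already a colour-preserving bijection $\script{L}\p{G(\vec{p}_i)} \to \script{L}'\p{G'(\vec{r}_i)}$.

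For the inductive step, assume the family $\p{\alpha_n^i}_{i \leq k}$ has been defined. The structure I would exploit is this: going from $H^{(n)}$ to $H^{(n+1)}$, at each freshly appeared non-placeholder promise leaf $\ell$ of type $j$ (one that appeared most recently as a copy of a leaf in $L_j$) we glue a new rooted copy $K_\ell$ of the rooted graph $G(\vec{p}_j)$, its root being identified with $\ell$; analogously on the primed side we glue copies $K'_{\ell'}$ of $G'(\vec{r}_j)$ at the corresponding fresh type-$j$ leaves. Restricted to the subtrees of interest, the same description passes to $H^{(n+1)}(\vec{p}_i)$ and $H'^{(n+1)}(\vec{r}_i)$. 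I would then define $\alpha_{n+1}^i$ as follows: on a leaf $\mu \in \script{L}\p{H^{(n)}(\vec{p}_i)}$ that persists into $\script{L}\p{H^{(n+1)}(\vec{p}_i)}$ (in particular every placeholder leaf), set $\alpha_{n+1}^i(\mu) := \alpha_n^i(\mu)$; on any freshly appeared leaf $\lambda$ of $H^{(n+1)}(\vec{p}_i)$, identify the unique fresh type-$j$ promise leaf $\ell \in L_j \cap H^{(n)}(\vec{p}_i)$ such that $\lambda \in K_\ell$; by the colour-preservation of $\alpha_n^i$, $\alpha_n^i(\ell) \in L_j' \cap H'^{(n)}(\vec{r}_i)$, so the attachment there is a copy $K'_{\alpha_n^i(\ell)}$ of $G'(\vec{r}_j)$; finally, let $\alpha_{n+1}^i(\lambda)$ be the leaf of $K'_{\alpha_n^i(\ell)}$ corresponding to $\lambda$ under the canonical identifications $K_\ell \cong G(\vec{p}_j)$ and $K'_{\alpha_n^i(\ell)} \cong G'(\vec{r}_j)$, composed with the given bijection $\psi \restriction \script{L}\p{G(\vec{p}_j)}$.

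Verification is then largely routine bookkeeping: the attached copies $K_\ell$ are pairwise disjoint and exhaust all freshly appeared leaves (and similarly on the primed side), so $\alpha_{n+1}^i$ is a well-defined bijection; colour-preservation descends from that of $\alpha_n^i$ together with the standing hypothesis that $\psi \restriction \script{L}\p{G(\vec{p}_j)}$ is colour-preserving for every $j$; and the extension property $\alpha_{n+1}^i \supseteq \alpha_n^i$ holds by construction on the common part of their domains. The only subtlety to watch---and the main obstacle, such as it is---is the boundary compatibility at freshly appeared leaves: a type-$j$ leaf on the left must be mapped by $\alpha_n^i$ to a type-$j$ leaf on the right, so that the two attached copies are indeed isomorphic to the corresponding rooted graphs $G(\vec{p}_j)$ and $G'(\vec{r}_j)$ via the canonical identifications used to transport $\psi$. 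This is precisely the colour-preservation of $\alpha_n^i$ feeding back into the induction, so no deeper idea is needed beyond inductively propagating the given $\psi$ through the $1$-step extensions.
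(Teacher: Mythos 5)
Your proof is correct and takes essentially the same approach as the paper's: both set $\alpha^i_0 = \psi\restriction\script{L}(G(\vec p_i))$ and then, in the inductive step, keep $\alpha^i_n$ on the persisting (placeholder) leaves while transporting $\psi$ through the freshly attached copies of $G(\vec p_j)$ and $G'(\vec r_j)$, using colour-preservation of $\alpha^i_n$ to match up the attachment sites. You have merely spelled out the bookkeeping a bit more explicitly than the paper does.
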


\begin{proof}
Fix $i$. We proceed by induction on $n$. Put $\alpha^i_0 := \psi \restriction G(\vec{p}_i)$.

Now suppose that $\alpha^i_n$ exists. To form $H^{(n+1)}(\vec{p_i})$, we glued a copy of $G(\vec{p_j})$ to each $\ell \in L^{(n)}_j \cap H^{(n)}(\vec{p_i})$ for all $j \leq k$, and to construct $H'^{(n+1)}(\vec{r_i})$, we glued a copy of $G'(\vec{r_j})$ to each $\ell' \in L'^{(n)}_j \cap H'^{(n)}(\vec{r_i})$ for all $j \leq k$, in both cases keeping all copies of promise leaves.

By assumption, the second part can be phrased equivalently as: we glued on a copy of $G'(\vec{r_j})$ to each $\alpha^i_n(\ell)$ for $\ell\in L^{(n)}_j \cap H^{(n)}(\vec{r_i})$. Thus, we can now combine the bijections $\alpha^i_n(\ell)$ with all the individual bijections $\psi$ between all newly added $G(\vec{p_j})$ and $G'(\vec{r_j})$ to obtain a bijection $\alpha^i_{n+1}$ as desired.
\end{proof}

\begin{cor}
\label{cor_blablablabla}
In the above situation, for each $i$ there is a colour-preserving bijection $\alpha^i$ between $\script{L}\p{\cl(G)(\vec{p}_i)}$ and $\script{L}'\p{\cl(G')(\vec{r}_i)}$ with respect to the promise closures $\cl(\script{P})$ and $\cl(\script{P}')$.
\end{cor}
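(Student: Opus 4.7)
The plan is to take $\alpha^i$ to be the direct limit of the ascending chain of colour-preserving bijections supplied by Lemma~\ref{l:blabla}. Concretely, set
\[
\alpha^i := \bigcup_{n \in \N} \alpha^i_n.
\]
Since $\alpha^i_{n+1}$ extends $\alpha^i_n$ for every $n$, this union is a well-defined bijection between $\bigcup_n \script{L}\p{H^{(n)}(\vec{p_i})}$ and $\bigcup_n \script{L}'\p{H'^{(n)}(\vec{r_i})}$, and colour-preservation is inherited pointwise from the $\alpha^i_n$.

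It then remains to check that the restriction of $\alpha^i$ to the leaves living in the closure promise structures has the correct domain and image. By construction of the closure we have $\cl(G)(\vec{p_i}) = \bigcup_n H^{(n)}(\vec{p_i})$, and by Definition~\ref{def_closurestructure}, $\cl(\script{L})$ only retains the placeholder promises $\vec{p_j}$, with $\cl(L_j) = \bigcup_n L^{(n)}_j$. Hence $\script{L}(\cl(G)(\vec{p_i}))$ with respect to $\cl(\script{P})$ consists exactly of the placeholder promise leaves which appear at some finite stage inside $H^{(n)}(\vec{p_i})$; this is a subset of the domain of $\alpha^i$. Because $\alpha^i$ is colour-preserving, it sends placeholder leaves with label $j$ to placeholder leaves with the same label $j$ on the primed side, so the restriction is a bijection onto $\script{L}'(\cl(G')(\vec{r_i}))$, and it is colour-preserving in the sense of the closure promise structures.

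I expect the only real obstacle to be the bookkeeping between the two promise structures. At finite stages $\script{L}$ tracks promise leaves of every label, including the non-placeholder ones (which are leaves only momentarily before being covered by further extensions), whereas $\cl(\script{L})$ retains only the placeholder labels. Once this distinction is made explicit, colour-preservation at the level of the closure follows formally from the placeholder-compatibility built into each $\alpha^i_n$, and the union-of-a-chain argument proceeds routinely.
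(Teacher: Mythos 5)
Your proposal matches the paper's proof exactly: both define $\alpha^i$ as the union $\bigcup_n \alpha^i_n$ of the ascending chain from Lemma~\ref{l:blabla} and then observe that since each $\alpha^i_n$ respects all colours, the union in particular respects the placeholder colours that constitute $\cl(\script{P})$ and $\cl(\script{P}')$. Your added discussion of the bookkeeping between the finite-stage and closure promise structures is more explicit than the paper's two-line argument, but the underlying idea is identical.
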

\begin{proof}
Put $\alpha^i = \bigcup_n \alpha^i_n$. Because all $\alpha^i_n$ respected all colours, they respect in particular the placeholder promises which make up $\cl(\script{P})$ and $\cl(\script{P}')$.
\end{proof}

\section{Thickening the graph}
\label{s:thickening}

In this section, we lay the groundwork for the third stage of our algorithm, as outlined in \S\ref{s:1end}. Our aim is to clarify how gluing a one-ended graph $F$ onto a graph $G$ affects automorphisms and the end-space of the resulting graph.

\begin{defn}[Gluing sum]
\label{gluingsum}
Given two graphs $G$ and $F$, and a bijection $\psi$ with $\operatorname{dom}(\psi) \subseteq V(G)$ and $\operatorname{ran}(\psi) \subseteq V(F)$, the \emph{gluing sum of $G$ and $F$ along $\psi$}, denoted by $G \oplus_{\psi} F$, is the quotient graph $\p{G \cup F}/\sim$ where $v \sim \psi(v)$ for all $v \in \operatorname{dom}(\psi)$.
\end{defn}

Our first lemma of this section explains how a partial isomorphism from $G_{n} - x$ to $H_{n} - \phi(x)$ in our construction can be lifted to the gluing sum of $G_{n}$ and $H_n$ with a graph $F$ respectively.

\begin{lemma}\label{l:extend}
Let $G$, $H$ and $F$ be graphs, and consider two gluing sums $G \oplus_{\psi_G} F$ and $H \oplus_{\psi_H} F$ along partial bijections $\psi_G$ and $\psi_H$. Suppose there exists an isomorphism $h\colon G - x \rightarrow H - y$ that restricts to a bijection between $\operatorname{dom}(\psi_G)$ and $\operatorname{dom}(\psi_H)$.

Then $h$ extends to an isomorphism $(G \oplus_{\psi_G} F) - x \to (H \oplus_{\psi_H} F) - y$ provided there is an automorphism $\pi$ of $F$ such that $\pi \circ \psi_G (v) = \psi_H \circ h(v)$ for all $v \in \operatorname{dom}(\psi_G)$.

\end{lemma}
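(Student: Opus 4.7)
The plan is to define the desired extension piecewise, using $h$ on the $G$-part and $\pi$ on the $F$-part, and then check that the two pieces agree on the identifications made by the two gluing sums. First I would record the harmless but useful observation that, because $h \colon G-x \to H-y$ is a bijection that restricts to a bijection $\operatorname{dom}(\psi_G) \to \operatorname{dom}(\psi_H)$, we must have $x \notin \operatorname{dom}(\psi_G)$ and $y \notin \operatorname{dom}(\psi_H)$. Consequently, removing $x$ from $G \oplus_{\psi_G} F$ (resp.\ $y$ from $H \oplus_{\psi_H} F$) simply deletes the unidentified vertex, so the underlying sets of $(G \oplus_{\psi_G} F) - x$ and $(H \oplus_{\psi_H} F) - y$ are precisely the $\sim$-classes coming from $V(G) \setminus \{x\}$ and $V(F)$, with the analogous statement on the target side.

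Next, writing $[\cdot]$ for $\sim$-classes, I would define
$$\tilde h([v]) = \begin{cases} [h(v)] & \text{if } v \in V(G) \setminus \{x\},\\ [\pi(v)] & \text{if } v \in V(F). \end{cases}$$
The only non-trivial well-definedness check occurs when $v \in \operatorname{dom}(\psi_G)$, where both clauses apply to the class $[v] = [\psi_G(v)]$. Here I would invoke the commutativity hypothesis: the identification in the target gives $[h(v)] = [\psi_H(h(v))]$ because $h(v) \in \operatorname{dom}(\psi_H)$, and then $\psi_H(h(v)) = \pi(\psi_G(v))$ by assumption, so the two clauses agree. Conversely, any other identification collapsed by $\sim$ on the source is handled the same way, so $\tilde h$ is a well-defined map.

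To show $\tilde h$ is an isomorphism, I would verify bijectivity and edge-preservation separately. Surjectivity is immediate since $h$ covers $V(H) \setminus \{y\}$ and $\pi$ covers $V(F)$, and every class on the target side has a representative in one of these sets. For injectivity, suppose $\tilde h([u]) = \tilde h([v])$; splitting into cases according to whether $u,v$ lie in $V(G) \setminus \{x\}$ or $V(F)$, I would use injectivity of $h$ and of $\pi$ together with the commuting square $\pi \circ \psi_G = \psi_H \circ h$ (and its inverse form, obtained by applying $\pi^{-1}$ and $h^{-1}$) to conclude $[u] = [v]$. Finally, every edge of $(G \oplus_{\psi_G} F) - x$ is inherited from an edge of $G - x$ or an edge of $F$, and $h$ and $\pi$ each preserve adjacency and non-adjacency on their respective sides, so $\tilde h$ is a graph isomorphism extending $h$.

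I do not anticipate a genuine obstacle: the entire content of the lemma is packaged into the commutativity condition $\pi \circ \psi_G = \psi_H \circ h$, whose sole role is to make the two clauses of $\tilde h$ agree on the glued vertices. Everything else is bookkeeping about quotient graphs.
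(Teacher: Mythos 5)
Your proposal is essentially the paper's own proof: both define the map piecewise as $h$ on the $G$-part and $\pi$ on the $F$-part, observe that the commuting condition $\pi \circ \psi_G = \psi_H \circ h$ is exactly what makes the two clauses agree on the identified vertices, and then conclude that the result is an isomorphism because each piece is. You spell out the bijectivity and edge-preservation checks a bit more explicitly, but the argument is the same.
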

\begin{proof}
We verify that the map
$$ \hat{h} \colon \p{G \oplus_{\psi_G} F} - x \rightarrow \p{H \oplus_{\psi_H}F} - y, \quad v \mapsto \begin{cases} h(v) & \text{ if } v \in G-x, \text{ and} \\ \pi(v) & \text{ if } v \in F \end{cases} $$
is a well-defined isomorphism. It is well-defined, since if $v \sim \psi_G(v)$ in $G \oplus_{\psi_G} F$, then $\hat{h}(v) \sim \hat{h}(\psi_G(v))$ in $H \oplus_{\psi_H} F$ by assumption on $\pi$. Moreover, since $h$ and $\pi$ are isomorphisms, it follows that $\hat{h}$ is an isomorphism, too.
\end{proof}

For the remainder of this section, all graphs are assumed to be locally finite. A \emph{ray} in a graph $G$ is a one-way infinite path. Given a ray $R$, then for any finite vertex set $S \subset V(G)$ there is a unique component $C(R,S)$ of $G - S$ containing a tail of $R$. An \emph{end} in a graph is an equivalence class of rays under the relation
\[
R \sim R' \Leftrightarrow \text{for every finite vertex set }S \subset V(G) \text{ we have } C(R,S) = C(R',S).
\]
We denote by $\Omega(G)$ the set of ends in the graph $G$, and write $C(\omega,S):= C(R,S)$ with $R \in \omega$. Let $\Omega(\omega,S) = \set{\omega'}:{C(\omega',S) = C(\omega,S)}$. The singletons $\singleton{v}$ for $v \in V(G)$ and sets of the form $C(\omega,S) \cup \Omega(\omega,S)$ generate a compact metrizable topology on the set $V(G) \cup \Omega(G)$, which is known in the literature as $|G|$.\footnote{Normally $|G|$ is defined on the $1$-complex of $G$ together with its ends, but for our purposes it will be enough to just consider the subspace $V(G) \cup \Omega(G)$. See the survey paper of Diestel \cite{D11} for further details.} This topology allows us to talk about the closure of a set of vertices $X \subset V(G)$, denoted by $\closure{X}$. Write $\partial (X) = \closure{X} \setminus X = \closure{X} \cap \Omega(X)$ for the boundary of $X$: the collection of all ends in the closure of $X$. Then an end $\omega \in \Omega(G)$ lies in $\partial (X)$ if and only if for every finite vertex set $S \subset V(G)$, we have $\cardinality{X  \cap C(\omega,S)} = \infty$. Therefore $\Omega(G) = \partial (X)$ if and only if for every finite vertex set $S \subset V(G)$, every infinite component of $G - S$ meets $X$ infinitely often. In this case we say that $X$ is \emph{dense} for $\Omega(G)$.

Finally, an end $\omega \in \Omega(G)$ is \emph{free} if for some $S$, the set $\Omega(\omega,S) = \singleton{\omega}$. Then $\Omega'(G)$ denotes the \emph{non-free} (or {limit}-)ends. Note that $\Omega'(G)$ is a closed subset of $\Omega(G)$.

\begin{lemma}\label{l:oneend}
For locally finite connected graphs $G$ and $F$, consider the gluing sum $G \oplus_{\psi} F$ for a partial bijection $\psi$. If $F$ is one-ended and $\operatorname{dom}(\psi)$ is infinite, then $\Omega(G \oplus_\psi F) \cong \Omega (G) / \partial (\operatorname{dom}(\psi))$.
\end{lemma}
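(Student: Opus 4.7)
The plan is to construct an explicit bijection
\[
\Phi \colon \Omega(G)/\partial(\operatorname{dom}(\psi)) \to \Omega(G \oplus_\psi F),
\]
verify that it is continuous, and then invoke the standard fact that a continuous bijection from a compact space to a Hausdorff space is a homeomorphism. On the quotient, the collapsed point $*$ (representing $\partial(\operatorname{dom}(\psi))$) will be sent to the end of $G \oplus_\psi F$ represented by any ray in $F$, which is well-defined since $F$ is one-ended; for $\omega \in \Omega(G) \setminus \partial(\operatorname{dom}(\psi))$ the class $[\omega]$ will be sent to the end of $G \oplus_\psi F$ represented by any ray for $\omega$, viewed in the gluing via the natural subgraph inclusion $G \subseteq G \oplus_\psi F$.

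The key mechanism, used throughout, is that for any finite $S \subseteq V(G \oplus_\psi F)$, one-endedness of $F$ yields a unique infinite component $F^\infty$ of $F - (S \cap V(F))$, missing only finitely many vertices of $\operatorname{ran}(\psi)$. To check that $\Phi$ descends to the quotient, I would show that a ray $R$ in $G$ representing some $\omega \in \partial(\operatorname{dom}(\psi))$ is equivalent in the gluing to any ray in $F$: a tail of $R$ lies in a component of $G - (S \cap V(G))$ meeting $\operatorname{dom}(\psi)$ infinitely often, so it can be connected to $F^\infty$ through some unused $\operatorname{dom}(\psi)$-vertex. The same mechanism powers injectivity: if $\omega \notin \partial(\operatorname{dom}(\psi))$, I would pick a finite $S \subset V(G)$ making $C(\omega,S) \cap \operatorname{dom}(\psi)$ finite and add those finitely many vertices to $S$; the resulting component $C(\omega,S)$ is then disjoint from $\operatorname{dom}(\psi)$ and so persists intact as a component of $(G \oplus_\psi F) - S$ (no new $F$-connections can attach to it), separating $\Phi([\omega])$ both from $\Phi(*)$ and from any other $\Phi([\omega'])$.

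The step I expect to be the main obstacle is surjectivity, since a general ray in $G \oplus_\psi F$ may oscillate infinitely often between $G$ and $F$. Given $\omega^* \neq \Phi(*)$ with representative $R^*$, I would pick a finite $S_0$ separating a tail of $R^*$ from $F^\infty$; one-endedness of $F$ then forces $C^*(S_0) \cap V(F) \subseteq V(F) \setminus F^\infty$, which is finite, so adding this finite set to $S_0$ produces a finite $S^*$ with $C^*(S^*) \cap V(F) = \emptyset$. A short path argument then shows that $C^*(S^*)$ coincides with a single component $C_G$ of $G - (S^* \cap V(G))$: any $G$-path witnessing connectivity inside $C_G$ automatically avoids $S^*$, because its $V(F)$-vertices all lie in $\operatorname{dom}(\psi) \subseteq V(G)$ and are therefore already excluded by avoiding $S^* \cap V(G)$. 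A ray inside $C_G$ then yields an end $\omega \in \Omega(G)$ with $C(\omega, S^* \cap V(G)) = C_G$ disjoint from $\operatorname{dom}(\psi)$, so $\omega \notin \partial(\operatorname{dom}(\psi))$. Refining the argument for arbitrary finite $U \supseteq S^*$ (the same reasoning gives $C^*(U)$ as a component of $G - (U \cap V(G))$ inside $C_G$), any representative ray $R$ of $\omega$ stays in the same component of the gluing as $R^*$ at every finite cut, giving $\Phi([\omega]) = \omega^*$.

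Continuity of $\Phi$ is then routine from the topology of ends: basic open subsets of $\Omega(G \oplus_\psi F)$ are determined by finite cuts, and the separators engineered in the injectivity and surjectivity arguments show that their $\Phi$-preimages are open in the quotient, with neighbourhoods of $*$ corresponding to open saturated subsets of $\Omega(G)$ around the closed set $\partial(\operatorname{dom}(\psi))$. Since $\Omega(G \oplus_\psi F)$ is compact and $\Omega(G)/\partial(\operatorname{dom}(\psi))$ is Hausdorff (because $\partial(\operatorname{dom}(\psi))$ is a closed subset of $\Omega(G)$), the continuous bijection $\Phi$ is automatically a homeomorphism, finishing the proof.
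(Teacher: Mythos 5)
Your proposal is correct and takes essentially the same route as the paper's proof: both construct a continuous bijection between $\Omega(G)/\partial(\operatorname{dom}(\psi))$ and $\Omega(G\oplus_\psi F)$ (the paper via a continuous surjection $f\colon\Omega(G)\to\Omega(G\oplus_\psi F)$ whose only non-trivial fibre is $\partial(\operatorname{dom}(\psi))$, you directly on the quotient), both exploit that for any finite separator $S$ the part of $F$ outside its unique infinite component is finite, and both finish with the compact-to-Hausdorff argument. One small slip: in your final paragraph the roles of compact and Hausdorff are interchanged --- what the standard lemma needs is that the domain $\Omega(G)/\partial(\operatorname{dom}(\psi))$ is compact (as a quotient of the compact space $\Omega(G)$) and the codomain $\Omega(G\oplus_\psi F)$ is Hausdorff --- but since both spaces are in fact compact Hausdorff, the conclusion stands unharmed.
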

\begin{proof}
Note first that for locally finite graphs $G$ and $F$, also $G \oplus_{\psi} F$ is locally finite. Observe further that all rays of the unique end of $F$ are still equivalent in $G \oplus_\psi F$, and so $G \oplus_\psi F$ has an end $\hat{\omega}$ containing the single end of $F$.

We are going to define a continuous surjection $f \colon \Omega(G) \to \Omega(G \oplus_\psi F)$ with the property that $f$ has precisely one non-trivial fibre, namely $f^{-1}(\hat{\omega}) = \partial \p{\operatorname{dom}(\psi)}$. It then follows from definition of the quotient topology that $f$ induces a continuous bijection from the compact space $\Omega (G) / \partial \p{\operatorname{dom}(\psi)}$ to the Hausdorff space $\Omega(G \oplus_\psi F)$, which, as such, is necessarily a homeomorphism.

The mapping $f$ is defined as follows. Given an end $\omega \in \Omega(G) \setminus \partial \p{\operatorname{dom}(\psi)}$, there is a finite $S \subset V(G)$ such that $C(\omega,S) \cap \operatorname{dom}(\psi) = \emptyset$, and so $C=C(\omega,S)$ is also a component of $\p{G \oplus_{\psi} F} - S$, which is disjoint from $F$. Define $f$ to be the identity between $\Omega(G) \cap \closure{C}$ and $\Omega(G \oplus_\psi F) \cap \closure{C}$, while for all remaining ends $\omega \in \Omega(G) \cap \closure{\operatorname{dom}(\psi)}$, we put $f(\omega) = \hat{\omega}$. %It is clear that the only non-trivial fibre of $f$ is $f^{-1}(\hat{\omega})$.

To see that this assignment is continuous at $\omega \in \Omega(G) \cap \closure{\operatorname{dom}(\psi)}$, it suffices to show that $C:=C(\omega, S) \subset G - S$ is a subset of $C':=C(\hat{\omega},S) \subset \p{G \oplus_\psi F} -S$ for any finite set $S \subset G \oplus_{\psi} F$. To see this inclusion, note that by choice of $\omega$, we have $\cardinality{ \operatorname{dom}(\psi) \cap C} = \infty$. %At the same time, since $F$ is one-ended, $F - S$ has precisely one infinite component, say $D$. Moreover, since $F$ is locally finite, it follows that $F - D$ is finite, and so all but finitely many elements of $\operatorname{ran}(\psi)$ are contained in $D$. Together, we conclude that there is $v \in \operatorname{dom}(\psi) \cap  C$ such that $\psi(v) \in D$. 
At the same time, since $F$ is both one-ended and locally finite, $F - S$ has precisely one infinite component $D$ and $F - D$ is finite, so as $\psi$ is a bijection, there is $v \in \operatorname{dom}(\psi) \cap C$ with $\psi(v) \in D$ (in fact, there are infinitely many such $v$).
Since $v$ and $\psi(v)$ get identified in $G \oplus_{\psi} F$, we conclude that $C \cup D$ is connected in $\p{G \oplus_{\psi} F} - S$, and hence that $C \cup D \subset C'$ as desired.

Finally, to see that $f$ is indeed surjective, note first that the fact that $\operatorname{dom}(\psi)$ is infinite implies that $\closure{\operatorname{dom}(\psi)} \cap \Omega(G) \neq \emptyset$, and so $\hat{\omega} \in \operatorname{ran}(f)$. Next, consider an end $\omega \in \Omega(G \oplus_\psi F)$ different from $\hat{\omega}$. Find a finite separator $S \subset V(G \oplus_\psi F)$ such that $C(\omega,S) \neq C(\hat{\omega},S)$. It follows that $\operatorname{dom}(\psi) \cap C(\omega,S) $ is finite. So there is a finite $S' \supseteq S$ such that $C:=C(\omega,S') \neq C(\hat{\omega},S')$ and $\operatorname{dom}(\psi) \cap C= \emptyset $. So by definition,  $f$ is a bijection between $\Omega(G) \cap \closure{C}$ and $\Omega(G \oplus_\psi F) \cap \closure{C}$, so $\omega \in \operatorname{ran}(f)$.
\end{proof}

\begin{cor}\label{c:oneend}
Under the above assumptions, if $\operatorname{dom}(\psi)$ is dense for $\Omega(G)$, then $G \oplus_{\psi} F$ is one-ended.
\end{cor}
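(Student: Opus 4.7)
The plan is to invoke Lemma \ref{l:oneend} directly and exploit the density assumption to collapse the quotient to a single point. First I would check connectedness of $G \oplus_\psi F$: since $G$ and $F$ are both connected and $\operatorname{dom}(\psi)$ is non-empty (it is infinite), any vertex of $F$ can be reached from any vertex of $G$ through a pair identified by $\psi$. Hence $G \oplus_\psi F$ is connected, and so "one-ended" just means $|\Omega(G \oplus_\psi F)| = 1$.

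Next, by Lemma \ref{l:oneend}, there is a homeomorphism
\[
\Omega(G \oplus_\psi F) \cong \Omega(G)/\partial\p{\operatorname{dom}(\psi)}.
\]
I would then unpack the hypothesis that $\operatorname{dom}(\psi)$ is dense for $\Omega(G)$. According to the paragraph preceding Lemma \ref{l:oneend}, density of a vertex set $X$ for $\Omega(G)$ is defined to mean exactly $\Omega(G) = \partial(X)$. Applied to $X = \operatorname{dom}(\psi)$, this yields $\Omega(G) = \partial\p{\operatorname{dom}(\psi)}$. Thus the quotient on the right-hand side collapses the entire space $\Omega(G)$ to one point, so $\Omega(G \oplus_\psi F)$ is a singleton, as required.

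There is no real obstacle here: the statement is essentially immediate from Lemma \ref{l:oneend} combined with the definition of density. The only thing worth double-checking is that the quotient notation $\Omega(G)/\partial\p{\operatorname{dom}(\psi)}$, as used in Lemma \ref{l:oneend}, really stands for the topological quotient that identifies all of $\partial\p{\operatorname{dom}(\psi)}$ to a single point (rather than, say, a partition-based quotient), so that collapsing the whole space produces a one-point space rather than the empty space. This is unambiguous from the proof of Lemma \ref{l:oneend}, where $\partial\p{\operatorname{dom}(\psi)}$ is precisely the unique non-trivial fibre $f^{-1}(\hat\omega)$ of the surjection $f$ onto $\Omega(G \oplus_\psi F)$; when $\partial\p{\operatorname{dom}(\psi)} = \Omega(G)$ the whole of $\Omega(G)$ maps to the single end $\hat\omega$, and no further verification is needed.
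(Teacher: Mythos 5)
Your proof is correct and is exactly the intended deduction: the paper states the corollary as an immediate consequence of Lemma~\ref{l:oneend}, and your argument---density means $\Omega(G) = \partial\p{\operatorname{dom}(\psi)}$, so the quotient in the lemma collapses to a single point---is precisely that. The connectedness check and the remark about the meaning of the quotient are sensible sanity checks but add nothing beyond what the lemma already guarantees.
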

\begin{cor}\label{c:countend}
Under the above assumptions, if $\closure{\operatorname{dom}(\psi)} \cap \Omega(G) = \Omega'(G)$, then $G \oplus_{\psi} F$ has at most one non-free end.
\end{cor}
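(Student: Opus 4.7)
The plan is to translate the statement into a pure point-set topology claim via Lemma~\ref{l:oneend}. Under the running assumptions of that lemma, it already produces a homeomorphism
$$\Omega(G \oplus_\psi F) \;\cong\; \Omega(G) / \partial(\operatorname{dom}(\psi)),$$
and the hypothesis of the corollary is precisely that $\partial(\operatorname{dom}(\psi)) = \Omega'(G)$. (Since $\operatorname{dom}(\psi)$ is infinite, its closure in the compact space $|G|$ meets $\Omega(G)$, so $\partial(\operatorname{dom}(\psi))$ is non-empty and Lemma~\ref{l:oneend} applies.) The key dictionary I would highlight at the very start is that, in a locally finite graph, an end is free if and only if it is an isolated point of the end space: the defining condition $\Omega(\omega,S) = \{\omega\}$ for some finite $S$ just says that some basic open neighbourhood of $\omega$ in $|G|$ meets $\Omega(G)$ only in $\omega$. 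Hence the corollary reduces to showing that the quotient space $\Omega(G)/\Omega'(G)$ has at most one non-isolated point, namely the class $\hat\omega := [\Omega'(G)]$.

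To verify this reduced claim, I would fix any class $[\omega] \neq \hat\omega$. Then $\omega \notin \Omega'(G)$ is a free end of $G$, so $\{\omega\}$ is open in $\Omega(G)$. Since $\{\omega\}$ is disjoint from the only non-trivial equivalence class $\Omega'(G)$, it is saturated under the quotient relation, and hence by the universal property of the quotient topology its image $\{[\omega]\}$ is open in $\Omega(G)/\Omega'(G)$. Thus $[\omega]$ is isolated in the quotient, and transporting along the homeomorphism of Lemma~\ref{l:oneend} shows that the corresponding end of $G \oplus_\psi F$ is free. The only class that can possibly fail to be isolated is $\hat\omega$, giving the desired bound of at most one non-free end of $G \oplus_\psi F$.

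The only genuine obstacle is making the dictionary between the analytic notion of freeness (``there is $S$ with $\Omega(\omega,S) = \{\omega\}$'') and the topological notion of being an isolated point of $\Omega(G)$ fully explicit; once this is in place, everything else is a mechanical application of the quotient topology together with Lemma~\ref{l:oneend}, with no case analysis on how many free ends accumulate on $\Omega'(G)$ required.
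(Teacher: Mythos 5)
Your proposal is correct and matches the paper's intended derivation: the paper states Corollary~\ref{c:countend} as an immediate consequence of Lemma~\ref{l:oneend}, and your argument simply makes explicit the two ingredients that make it immediate, namely that the hypothesis says $\partial(\operatorname{dom}(\psi)) = \Omega'(G)$, and that in a locally finite graph an end is free precisely when it is isolated in $\Omega(G)$, so the quotient $\Omega(G)/\Omega'(G)$ has at most one non-isolated point.
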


We remark that more direct proofs for Corollaries~\ref{c:oneend} and \ref{c:countend} can be given that do not need the full power of Lemma~\ref{l:oneend}.

\section{The construction}
\label{s:proofone}

\subsection{Preliminary definitions} In the precise statement of our construction in \S\ref{s:bandf}, we are going to employ the following notation.

\begin{defn}[Bare path, maximally bare path]
A path $P=v_0,v_1,\ldots,v_n$ in a graph $G$ is called a \emph{bare path} if $\deg_G(v_i)=2$ for all internal vertices $v_i$ for $0< i < n$. The path $P$ is a \emph{maximal bare path} (or \emph{maximally bare}) if in addition $\deg_G(v_0) \neq 2 \neq\deg_G(v_n)$. An infinite path $P=v_0,v_1,v_2, \ldots$ is \emph{maximally bare} if $\deg_G(v_0)\neq 2$ and $\deg_G(v_i)=2$ for all $i \geq 1$. 
\end{defn}

\begin{defn}[Bare-spectrum]
The \emph{bare-spectrum} of $G$ is
$$ \Sigma(G):= \set{k \in \N}:{G \text{ contains an maximally bare path of length } k }. $$
If $\Sigma(G)$ is finite, we let $\sigma_0(G) = \max \Sigma(G)$ and  $\sigma_1(G) =  \max \p{ \Sigma(G) \setminus \singleton{\sigma_0(G)}}$.
\end{defn}

\begin{lemma}\label{l:miibound}
Let $e$ be an edge of a locally finite graph $G$. If $\Sigma(G)$ is finite, then $\Sigma(G - e)$ is finite.
\end{lemma}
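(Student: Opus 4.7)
The key observation is that the edge removal $G \to G - e$ changes vertex degrees only at $x$ and $y$ (where $e = \{x, y\}$), each decreasing by exactly $1$. Every vertex outside $\{x, y\}$ thus retains its degree. I will use this to bound the length of any finite mii-path in $G - e$, so that $\Sigma(G - e)$ is a bounded subset of $\mathbb{N}$ and hence finite.

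Let $P = v_0, v_1, \ldots, v_n$ be any finite mii-path in $G - e$, and set $K := \sigma_0(G)$ if $\Sigma(G) \neq \emptyset$, else $K := 0$. Every interior vertex of $P$ has $(G-e)$-degree $2$, so has $G$-degree $2$ if it is not in $\{x, y\}$ and $G$-degree $3$ if it is in $\{x, y\}$. Since at most two vertices of $P$ lie in $\{x, y\}$, there are at most two such \emph{split} interior vertices; cutting $P$ at them produces at most three subpaths $Q_1, \ldots, Q_m$ whose interiors consist entirely of $G$-degree-$2$ vertices. A subpath whose endpoints both have $G$-degree $\neq 2$ is an mii-path of $G$, so of length at most $K$. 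This deals with all \emph{inner} subpaths, and also with the \emph{outer} subpaths (those containing $v_0$ or $v_n$) whose outermost endpoint either lies outside $\{x, y\}$ or lies in $\{x, y\}$ with $G$-degree $\neq 2$.

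The only remaining case is that $v_0$ or $v_n$ equals $x$ or $y$ with $G$-degree $2$; say $v_0 = x$ with $\deg_G(x) = 2$. Then every vertex of the outer subpath $Q$ containing $v_0$ has $G$-degree $2$ except possibly its other endpoint. Thus $Q$ is a segment of the unique maximal chain of $G$-degree-$2$ vertices starting at $x$ and heading away from $e$, and terminates at the first vertex of $G$-degree $\neq 2$; its length is a constant $L_x$ determined purely by $G$ and $e$. A symmetric analysis gives $L_y$ when $v_n$ is bad. In the doubly-bad case $\{v_0, v_n\} = \{x, y\}$ with both of $G$-degree $2$, there is no room for interior split points, and $P + e$ forms a cycle in $G$ of a fixed length, so $|P|$ is itself a specific constant. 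Summing these contributions yields $n \leq 3K + L$ for some constant $L = L(G, e)$, whence $\Sigma(G - e)$ is a bounded subset of $\mathbb{N}$ and therefore finite.

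The main obstacle I anticipate is the bookkeeping to enumerate the configurations based on the positions of $x$ and $y$ on $P$ and their $G$-degrees, and to verify in each configuration that the length bound holds. No deeper idea beyond the elementary degree-tracking above should be required.
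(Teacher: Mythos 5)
Your proposal is correct, and it takes a genuinely different route from the paper's proof. The paper argues by \emph{counting}: it first observes that any vertex of degree $\le 2$ lies on at most one mii-path, and then, since only $x$ and $y$ change degree when $e$ is removed, it deduces that at most two finite mii-paths of $G-e$ fail to be subpaths of finite mii-paths of $G$ (one ``new'' path associated with $x$, one with $y$). Hence $\Sigma(G-e)$ contains at most two elements beyond $\Sigma(G)$, and is therefore finite. You instead prove a quantitative \emph{length bound}: you take an arbitrary finite mii-path $P$ of $G-e$, cut it at its (at most two) interior vertices lying in $\{x,y\}$, observe that each piece with both endpoints of $G$-degree $\ne 2$ is a finite mii-path of $G$ and so has length at most $\sigma_0(G)$, and handle the remaining ``bad'' outer pieces (and the doubly-bad cycle case) by showing their lengths are constants $L_x$, $L_y$, $L_{xy}$ determined by $G$ and $e$. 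This yields $n \le 3\sigma_0(G) + L(G,e)$, so $\Sigma(G-e)$ is bounded. Both arguments are valid. Your version buys an explicit bound on $\max\Sigma(G-e)$ but pays for it with more case analysis (identifying which endpoints are ``bad'', defining $L_x$ and $L_y$ via the unique degree-$2$ chain out of a new leaf, and separately handling the cycle case); the paper's counting argument is shorter, needing only the single observation about degree-$\le 2$ vertices, though it leaves the degree configurations $\deg_G(x) \in \{1\}\cup\{4,5,\dots\}$ to the reader.
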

\begin{proof}
Observe first that every vertex of degree $\leq 2$ in any graph can lie on at most one maximally bare path.

We now claim that for an edge $e = xy$, there are at most two finite maximally bare paths in $G-e$ which are not subpaths of finite maximally bare paths of $G$.

Indeed, if $\deg{x}=3$ in $G$, then $x$ can now be the interior vertex of one new finite maximally bare path in $G-e$. And if $\deg{x}=2$ in $G$, then $x$ can now be end-vertex of one new finite maximally bare path in $G-e$ (this is relevant if $x$ lies on an infinite maximally bare path of $G$). The argument for $y$ is the same, so the claim follows.
\end{proof}

\begin{defn}[Spectrally distinguishable]
Given two graphs $G$ and $H$, we say that $G$ and $H$ are \emph{spectrally distinguishable} if there is some $k \geq 3$ such that $k \in \Sigma(G) \triangle \Sigma(H) = \Sigma(G) \setminus \Sigma(H) \cup  \Sigma(H) \setminus \Sigma(G)$.
\end{defn}

Note that being spectrally distinguishable is a strong certificate for being non-isomorphic.

\begin{defn}[$k$-ball]
For $G$ a subgraph of $H$, and $k > 0$, the $k$-ball $\Ball_H(G, k)$ is the induced subgraph of $H$ on the set of vertices at distance at most $k$ of some vertex of $G$.
\end{defn}

\begin{defn}[Proper bare extension; infinite growth]
Let $G$ be a graph, $B$ a subset of leaves of $G$, and $H$ a component of $G$.
\begin{itemize}
\item  A graph $\hat{G} \supset H$ is an {\em bare extension} of $H$ at $B$ to length $k$ if $\Ball_{\hat{G}}(H,k)$ can be obtained from $H$ by adjoining, at each vertex $\ell \in B \cap V(H)$, a new path of length $k$ starting at $\ell$, and a new leaf whose only neighbour is $\ell$.\footnote{We note that this is a slightly different definition of an bare extension to that in \cite{BEHLP17}.}
\item A leaf $\ell$ in a graph $G$ is \emph{proper} if the unique neighbour of $\ell$ in $G$ has degree $\geq 3$. An bare extension is called \emph{proper} if every leaf in $B$ is proper.
\item An bare extension  $\hat{G}$ of $G$ is \emph{of infinite growth} if every component of $\hat{G} - G$ is infinite.
\end{itemize}
\end{defn}

\subsection{The back-and-forth construction}\label{s:bandf}
Our aim in this section is to prove our main theorem announced in the introduction.

\begin{mainresult}
There are two hypomorphic connected one-ended infinite graphs $G$ and $H$ with maximum degree five such that $G$ is not isomorphic to $H$.
\end{mainresult}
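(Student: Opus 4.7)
The plan is to execute the three-stage back-and-forth construction sketched in Section~\ref{s:1end}, maintaining at each stage $n \in \N$ the following invariants: locally finite graphs $G_n, H_n$ of maximum degree at most $5$ with promise structures $\script{P}^G_n, \script{P}^H_n$ whose placeholder-promises carry the colours red and blue; a locally finite tree $F_n$ with a matching promise structure $\script{P}^F_n$; colour-preserving bijections $\psi_{G_n}, \psi_{H_n}$ from the coloured leaves of $G_n, H_n$ onto those of $F_n$; and a finite family $\script{H}_n = \set{h_x}:{x \in X_n}$ of partial isomorphisms $h_x \colon G_n - x \to H_n - \varphi(x)$ together with matching automorphisms $\pi_x$ of $F_n$ making the diagram of Section~\ref{s:1end} commute. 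For the base case I would start with spectrally distinguishable $G_0, H_0$, each with one red and one blue leaf, an appropriately matched $F_0$, and $X_0 = \emptyset$.

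For the inductive step I would fix an enumeration of $V(G) \cup V(H)$ and pick the next unaddressed vertex $v$. Stages~1 and~2 from Section~\ref{s:1end} then extend $G_n$ and $H_n$ by grafting copies of opposite components behind a new yellow/green bridge carrying an mii-path of length $k_n > \sigma_0(G_n) + \sigma_0(H_n)$, and closing up the new promise structure (Propositions~\ref{p:closure} and~\ref{p:alfnbadjklfng}) to obtain $G'_{n+1}, H'_{n+1}$; every $h_x \in \script{H}_n$ and the new isomorphism at $v$ lift thanks to property~\ref{i:keepslabelledpromises}. In parallel (Stage~3) I would apply the analogous grafting to $F_n$ and pass to the closure to obtain $F_{n+1}$, invoking Lemma~\ref{l:blabla} and Corollary~\ref{cor_blablablabla} to extend $\psi_{G_{n+1}}, \psi_{H_{n+1}}$ colour-preservingly and to lift each $\pi_x$ to an automorphism of $F_{n+1}$. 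Finally I would set $G_{n+1} := G'_{n+1} \disjointSum_{\psi_{G_{n+1}}} (F_{n+1} \square \N)$ and $H_{n+1} := H'_{n+1} \disjointSum_{\psi_{H_{n+1}}} (F_{n+1} \square \N)$, applying Lemma~\ref{l:extend} with the matching $\pi_x$ to push each partial isomorphism through the gluing.

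Setting $G = \bigcup_n G_n$ and $H = \bigcup_n H_n$, the back-and-forth enumeration ensures every vertex is eventually addressed, so $G$ and $H$ are hypomorphic. Non-isomorphism follows from a spectral argument: the mii-path of length $k_n$ introduced at stage $n$ strictly exceeds all previous mii-lengths, while the grids $F_{n+1} \square \N$ can contribute mii-paths of length at most $1$, so Lemma~\ref{l:miibound} forces the spectra of $G$ and $H$ to differ after removing a specifically placed bridge. Connectedness is immediate from the construction, and the degree bound of $5$ follows from the fact that interior vertices of $F_{n+1} \square \N$ yield the extremal case (at most $3$ from $F_{n+1}$ plus $2$ from the ray). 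One-endedness is handled inductively: the image of $\psi_{G_n}$ is dense for $\Omega(G'_n)$ by construction, so Corollary~\ref{c:oneend} yields one-endedness of each $G_n$, and hence of $G$. The main obstacle I anticipate is synchronising Stage~3 with Stages~1--2 so that the closure on $F_n$ produces exactly the coloured leaves demanded by the closures of $G_n$ and $H_n$, and that the symmetries encoded by the $\pi_x$ suffice to mediate the required commutativity when invoking Lemma~\ref{l:extend}; the rest of the argument then reduces to bookkeeping once this parallel promise-respecting recursion is shown to be executable.
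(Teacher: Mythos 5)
Your proposal follows the same three-stage architecture as the paper's actual construction and invokes the right machinery (promise closure, parallel construction of $F_n$, the gluing sum, Lemma~\ref{l:extend}, Corollary~\ref{c:oneend}), but there are two places where, as written, the argument would not quite go through.

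First, the gluing map. You propose $G_{n+1} := G'_{n+1} \disjointSum_{\psi_{G_{n+1}}} (F_{n+1} \square \N)$, i.e.\ identifying the coloured leaves of $G'_{n+1}$ directly with vertices of $F_{n+1} \square \{0\}$. This breaks the recursion: after such an identification, the coloured leaves are no longer leaves of $G_{n+1}$, so the next iteration has no proper leaves at which to graft. The paper avoids this by gluing at the \emph{neighbours} of the coloured leaves, defining $\chi_{G_{n+1}}$ to send the unique degree-$3$ neighbour of each leaf $\ell \in \script{L}(G'_{n+1})$ to $\psi_{G'_{n+1}}(\ell)$, so that the coloured leaves survive as proper leaves in $G_{n+1}$. (This is also what makes the degree bound work: a degree-$3$ vertex of $G'_{n+1}$ is identified with a degree-$2$ vertex of $F_{n+1} \square \N$, giving degree $5$, and interior vertices of $F_{n+1}\square\N$ contribute at most $3+2=5$.) This is not mere bookkeeping --- without it the invariant that $R_{n+1}\cup B_{n+1}$ is a set of proper leaves fails.

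Second, one-endedness of the limit. You write that each $G_n$ being one-ended ``hence'' gives one-endedness of $G$. This does not follow in general: an increasing union of one-ended graphs need not be one-ended. The paper's argument uses the extra invariants \ref{ballsG}(a) and (b) --- that $G_{n+1}$ is an mii-extension of $G_n$ \emph{of infinite growth} and that the $(k_n+1)$-ball around $G_n$ misses the new coloured leaves --- to force any second infinite component of $G-S$ to eventually persist as an infinite component inside $G_{n+1}-G_n$, giving a contradiction. Your outline does not record these ball/mii-extension invariants, so the step from ``each $G_n$ one-ended'' to ``$G$ one-ended'' is a real gap. Similarly, your non-isomorphism sketch needs exactly those ball invariants (to show that pruning long mii-paths from $G$ recovers, up to a proper leaf-extension with spectrum perturbed only by $\{1,2\}$, the finite-stage graph $G_n$), rather than just Lemma~\ref{l:miibound}, which only gives finiteness of the spectrum after removing one edge.

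Apart from these two points, the strategy, the invariants you list, and the role of Lemmas~\ref{l:blabla}, \ref{l:extend} and Corollaries~\ref{cor_blablablabla}, \ref{c:oneend} match the paper's proof.
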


To do this we shall recursively construct, for each $n \in \N$, 
\begin{itemize}
\item disjoint rooted connected graphs $G_n$ and $H_n$, 
\item disjoint sets $R_n$ and $B_n$ of proper leaves of the graph $G_n \cup H_n$,
\item trees $F_n$,
\item disjoint sets $R'_n$ and $B'_n$ of leaves of $F_n$,
\item bijections $\psi_{G_n} \colon V(G_n) \cap \p{R_n \cup B_n} \to R'_n \cup B'_n$ and \newline $\psi_{H_n} \colon V(H_n) \cap \p{R_n \cup B_n} \to R'_n \cup B'_n$,
\item finite sets $X_n \subset V(G_n)$ and $Y_n \subset V(H_n)$, and bijections $\varphi_n \colon X_n \to Y_n$,
\item a family of isomorphisms $\script{H}_n = \set{h_{n,x} \colon G_n - x \to H_n - \varphi_n(x)}:{x \in X_n}$,
\item a family of automorphisms $\Pi_n = \set{\pi_{n,x} \colon F_n \to F_n}:{x \in X_n}$,
\item a strictly increasing sequence of integers $k_n \geq 2$,
\end{itemize}
such that for all $n \in \N$:\footnote{If the statement involves an object indexed by $n-1$ we only require that it holds for $n \geq 1$.}
\begin{enumerate}[label={\upshape($\dagger$\arabic{*})}]
	\item\label{nested} $G_{n-1} \subset G_{n}$ and $H_{n-1} \subset H_{n}$ as induced subgraphs,
    	\item\label{123} the vertices of $G_n$ and $H_n$ all have degree at most 5, 
        \item\label{12} the vertices of $F_n$ all have degree at most 3, 
       \item\label{colouredroots} the root of $G_n$ is in $R_n$ and the root of $H_n$ is in $B_n$,
	  \item\label{kbigenough} $\sigma_0(G_n)=\sigma_0(H_n) = k_n$,
        \item \label{specdis} $G_n$ and $H_n$ are spectrally distinguishable,
        \item \label{ends} $G_n$ and $H_n$ have at most one end,
        \item \label{dense} $\Omega(G_n \cup H_n) \subset \closure{R_n \cup B_n}$,
 		 \item \label{ballsG}
         		\begin{enumerate}[label=(\alph*)]
                	\item $G_{n}$ is a (proper) bare extension of infinite growth of $G_{n-1}$ at\newline $R_{n-1} \cup B_{n-1}$ to length $k_{n-1}+1$, and
                    \item $\Ball_{G_{n}}(G_{n-1}, k_{n-1}+1)$ does not meet $R_{n} \cup B_{n}$,
                \end{enumerate}
 		\item \label{ballsH}
        		\begin{enumerate}[label=(\alph*)]
                	\item $H_{n}$ is a (proper) bare extension of infinite growth of $H_{n-1}$ at\newline $R_{n-1} \cup B_{n-1}$ to length $k_{n-1}+1$, and
                    \item $\Ball_{H_{n}}(H_{n-1}, k_{n-1}+1)$ does not meet $R_{n} \cup B_{n}$,
                    \end{enumerate}
		\item \label{enumerations}there are enumerations $V(G_n) = \set{t_j}:{j \in J_n}$ and $V(H_n) = \set{s_j}:{j \in J_n}$ such that
		\begin{itemize}
    			\item $J_{n-1} \subset J_{n} \subset \N$, 
                \item $\set{t_j}:{j \in J_{n}}$ extends the enumeration $\set{t_j}:{j \in J_{n-1}}$ of $V(G_{n-1})$, and similarly for $\set{s_j}:{j \in J_n}$,
        			\item $\cardinality{\N \setminus J_n} = \infty$,
    			\item $\Set{0,1,\ldots,n} \subset J_n $,
       		\end{itemize}
 	\item \label{don'tmesswithleaves} $\set{t_j, s_j}:{j \leq n} \cap \p{R_n \cup B_n} = \emptyset$,
	\item\label{XandY} the finite sets of vertices $X_n$ and $Y_n$ satisfy $\cardinality{X_n} = n = \cardinality{Y_n}$, and
		\begin{itemize}
			\item $X_{n-1} \subset X_{n}$ and $Y_{n-1} \subset Y_{n}$,  
			\item $\varphi_{n} \restriction X_{n-1} = \varphi_{n-1}$,
			\item $\set{t_j}:{j \leq \lfloor (n-1)/2 \rfloor} \subset X_n$ and $\set{s_j}:{j \leq \lfloor n/2 \rfloor - 1} \subset Y_n$,
            \item $(X_n \cup Y_n) \cap (R_n \cup B_n) = \emptyset$,
		\end{itemize}
	\item\label{hypomorphism} the families of isomorphisms $\script{H}_n$ satisfy 
		\begin{itemize}
			\item  $h_{n, x} \restriction \p{G_{n-1} - x} = h_{n-1,x}$ for all $x \in X_{n-1}$,   
                         \item the image of $R_n \cap V(G_n)$ under $h_{n,x}$ is $R_n \cap V(H_n)$ for all $x \in X_n$,
                         \item the image of $B_n \cap V(G_n)$ under $h_{n,x}$ is $B_n \cap V(H_n)$ for all $x \in X_n$.
                         \end{itemize}
     \item\label{commutes} the families of automorphisms $\Pi_n$ satisfy 
		\begin{itemize}
                         \item $\pi_{n,x} \restriction R'_n$ is a permutation of $R'_n$ for each $x \in X_n$, 
                         \item $\pi_{n,x} \restriction B'_n$ is a permutation of $B'_n$ for each $x \in X_n$,   
     \item for each $x \in X_n$, the following diagram commutes: \begin{center}
\begin{tikzpicture}
  \matrix (m)
    [
      matrix of math nodes,
      row sep    = 3em,
      column sep = 9em
    ]
    {
      \script{L}(G_{n})              & \script{L}(H_{n}) \\
      \script{L}(F_{n}) &  \script{L}(F_{n})           \\
    };
  \path
    (m-1-1) edge [->] node [left] {\scriptsize{$\psi_{G_{n}}$}} (m-2-1)
    (m-1-1.east |- m-1-2)
      edge [->] node [above] {\scriptsize{$h_{n,x} \restriction \script{L}(G_{n})$}} (m-1-2);
        \path
     (m-2-1) edge [->] node [above] {\scriptsize{$\pi_{n,x}  \restriction \script{L}(F_{n})$}} (m-2-2);
        \path
     (m-1-2) edge [->] node [right] {\scriptsize{$\psi_{H_{n}}$}} (m-2-2);
\end{tikzpicture}
\end{center} 
\end{itemize}
	I.e.\ for every $\ell \in \script{L}(G_{n}) := V(G_n) \cap \p{R_n \cup B_n}$ we have $\pi_{n,x}(\psi_{G_n}(\ell)) = \psi_{H_n}(h_{n,x}(\ell))$.   
\end{enumerate}

\subsection{
The construction yields the desired non-reconstructible one-ended graphs.
}
\label{subsec:result}

By property~\ref{nested}, we have $G_0 \subset G_1 \subset G_2 \subset \cdots $ and $H_0 \subset H_1 \subset H_2 \subset \cdots$. Let $G$ and $H$ be the union of the respective sequences. Then both $G$ and $H$ are connected, and as a consequence of \ref{123}, both graphs have maximum degree $5$. 

We claim that the map $\varphi =\bigcup_{n} \varphi_n$ is a hypomorphism between $G$ and $H$. Indeed, it follows from \ref{enumerations} and \ref{XandY} that $\varphi$ is a well-defined bijection from $V(G)$ to $V(H)$. To see that $\varphi$ is a hypomorphism, consider any vertex $x$ of $G$. This vertex appears as some $t_j$ in our enumeration of $V(G)$, so the map
$$h_x = \bigcup_{n > 2j } h_{n,x} \colon G-x \to H-\varphi(x),$$ 
is a well-defined isomorphism by \ref{hypomorphism} between $G-x$ and $H-\varphi(x)$.

Now suppose for a contradiction that there exists an isomorphism $f \colon G \rightarrow H$. Then $f$ maps $t_0$ into $H_n$ for some $n \in \N$. Properties \ref{kbigenough} and \ref{ballsG} imply that after deleting all maximally bare paths in $G$ of length  $>k_n$, the connected component $C$ of $t_0$ is a leaf extension of $G_n$ adding one further leaf to every vertex in $V(G_n) \cap \p{R_n \cup B_n}$. Similarly, properties \ref{kbigenough} and \ref{ballsH} imply that after deleting all maximally bare paths in $H$ of length  $>k_n$, the connected component $D$ of $f(t_0)$ is a leaf-extension of $H_n$ adding one further leaf to every vertex in $V(H_n) \cap \p{R_n \cup B_n}$. Note that $f$ restricts to an isomorphism between $C$ and $D$. However, since $C$ and $D$ are proper extensions, we have $\Sigma(C) \triangle \Sigma(G_n) \subseteq \{1,2\}$ and $\Sigma(D) \triangle \Sigma(H_n) \subseteq \{1,2\}$. Hence, since $G_n$ and $H_n$ are spectrally distinguishable by \ref{specdis}, so are $C$ and $D$, a contradiction. We have established that $G$ and $H$ are non-isomorphic reconstructions of each other.

Finally, for $G$ being one-ended, we now show that for every finite vertex separator $S \subset V(G)$, the graph $G - S$ has only one infinite component (the argument for $H$ is similar). Suppose for a contradiction $G-S$ has two infinite components $C_1$ and $C_2$. Consider $n$ large enough such that $S \subset V(G_n)$. Since $G_k$ is one-ended for all $k$ by \ref{ends}, we may assume that $C_1 \cap G_k$ falls apart into finite components for all $k \geq n$. Since $C_1$ is infinite and connected, it follows from \ref{ballsG}(b) that $C_1$ intersects $G_{n+1} - G_n$. But since $G_{n+1}$ is an bare extension of $G_n$ of infinite growth by \ref{ballsG}(a), we see that that $C_1 \cap \p{G_{n+1} - G_n}$ contains an infinite component, a contradiction.

\subsection{
The base case: there are finite rooted graphs $G_0$ and $H_0$ satisfying requirements \ref{nested}--\ref{commutes}.
}
\label{subsec:basecase}

Choose a pair of spectrally distinguishable, equally sized graphs $G_0$ and $H_0$ with maximum degree $\leq 5$ and $\sigma_0(G_0) = \sigma_0(H_0) = k_0$. Pick a proper leaf each as roots $\rooot{G_0}$ and $\rooot{H_0}$ for $G_0$ and $H_0$, and further proper leaves $\ell_b \in G_0$ and $\ell_r \in H_0$.

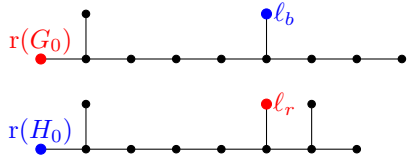
\begin{figure}[ht!]
\centering
\begin{tikzpicture}[scale=0.6]
\def \n {0}
\def \l {1}

\foreach \s in {1,...,9}
{
  \node[draw, circle,scale=.3, fill] (blob\s) at (\s,\n) {};
}

\foreach \s in {1,...,8}
{
	 \pgfmathsetmacro\t{\s + 1}
  \draw (blob\s) -- (blob\t);
}

\foreach \s in {2,6}
{
  \node[draw, circle,scale=.3, fill] (leaf\s) at (\s,\l) {};
   \draw (blob\s) -- (leaf\s);
}

\node[draw, red, circle,scale=.4, fill] (redroot) at (1,0) {};
\node[text=red] at (1,.4) {$\rooot{G_0}$};

\node[draw, blue, circle,scale=.4, fill] (blueleaf) at (6,1) {};
\node[text=blue] at (6.4,1) {$\ell_b$};

\def \n {-2}
\def \l {-1}

\foreach \s in {1,...,8}
{
  \node[draw, circle,scale=.3, fill] (blob\s) at (\s,\n) {};
}

\foreach \s in {1,...,7}
{
	 \pgfmathsetmacro\t{\s + 1}
  \draw (blob\s) -- (blob\t);
}

\foreach \s in {2,6,7}
{
  \node[draw, circle,scale=.3, fill] (leaf\s) at (\s,\l) {};
   \draw (blob\s) -- (leaf\s);
}

\node[draw, blue, circle,scale=.4, fill] (blueroot) at (1,-2) {};
\node[text=blue] at (1,-1.6) {$\rooot{H_0}$};

\node[draw, red, circle,scale=.4, fill] (redleaf) at (6,-1) {};
\node[text=red] at (6.4,-1) {$\ell_r$};

\end{tikzpicture}
\caption{A possible choice for the finite rooted graphs $G_0$ and $H_0$.}
\label{basecasefig}
\end{figure}

Define $R_0 = \{\rooot{G_0}, \ell_r \}$ and $B_0 = \{\rooot{H_0}, \ell_b\}$. We take $F_0$ to be two vertices $x$ and $y$ joined by an edge, with $R_0' = \{x\}$ and $B_0' = \{y\}$ and take $\psi_{G_0}$ to be the unique bijection sending $R_0 \cap G_0$ to $R'_0$ and $B_0 \cap G_0$ to $B'_0$, and similarly for $\psi_{H_0}$.
\begin{figure}[ht!]
\centering
\begin{tikzpicture}[scale=0.6]

\node[draw, red, circle,scale=.4, fill] (blob1) at (0,0) {};
\node[draw, blue, circle,scale=.4, fill] (blob2) at (1,0) {};

\draw (blob1) -- (blob2);

\node[text=blue] at (1.5,0) {$y$};

\node[text=red] at (-0.5,0) {$x$};

\end{tikzpicture}
\caption{$F_0$.}
\label{basecaseF}
\end{figure}

Let $J_0 = \Set{0,1,\ldots, |G_0|-1}$ and choose enumerations $V(G_0) = \set{t_j}:{j \in J_0}$ and $V(H_0) = \set{s_j}:{j \in J_0}$ with $t_0 \neq \rooot{G_0}$ and $s_0 \neq \rooot{H_0}$. Finally we let $X_0 = Y_0 = \script{H}_0 = \emptyset$. It is a simple check that conditions \ref{nested}--\ref{commutes} are satisfied.

\subsection{The inductive step: set-up}
\label{subsec:setup}

Now, assume that we have constructed graphs $G_k$ and $H_k$ for all $k \leq n$ such that \ref{nested}--\ref{commutes} are satisfied up to $n$. If $n=2m$ is even, then we have $\set{t_j}:{j \leq m-1} \subset X_{n}$ and in order to satisfy \ref{XandY} we have to construct $G_{n+1}$ and $H_{n+1}$ such that the vertex $t_m$ is taken care of in our partial hypomorphism. Similarly, if $n=2m+1$ is odd, then we have $\set{s_j}:{j \leq m-1} \subset Y_{n}$ and we have to construct $G_{n+1}$ and $H_{n+1}$ such that the vertex $s_m$ is taken care of in our partial hypomorphism. Both cases are symmetric, so let us assume in the following that $n=2m$ is even.

Now let $v$ be the vertex with the least index in the set $\set{t_j}:{j \in J_n} \setminus X_n$, i.e.\
\begin{align}
 v = t_i \; \text{ for } \; i = \min \set{j}:{t_j \in V(G_n) \setminus X_n}. \numberthis\label{defofx}
\end{align}

Then by assumption~\ref{XandY}, $v$ will be $t_m$, unless $t_m$ was already in $X_n$ anyway. In any case, since $\cardinality{X_n}= \cardinality{Y_n} = n$, it follows from \ref{enumerations} that $i \leq n$, so by \ref{don'tmesswithleaves}, $v$ does not lie in our leaf sets $R_n \cup B_n$, i.e.\
\begin{align}
 v \notin R_n \cup B_n. \numberthis\label{xdoesntmess}
\end{align}

In the next sections, we will demonstrate how to obtain graphs $G_{n+1} \supset G_{n}$, $H_{n+1}\supset H_{n}$ and $F_{n+1}$ with $X_{n+1}=X_n \cup \singleton{v}$ and $Y_{n+1} = Y_n \cup \singleton{\varphi_{n+1}(v)}$ satisfying \ref{nested}---\ref{ballsH} and \ref{XandY}--\ref{commutes}.

After we have completed this step, since $\cardinality{\N \setminus J_{n}} = \infty$, it is clear that we can extend our enumerations of $G_{n}$ and $H_{n}$ to enumerations of $G_{n+1}$ and $H_{n+1}$ as required, making sure to first list some new elements that do not lie in $R_{n+1} \cup B_{n+1}$.  This takes care of \ref{enumerations} and \ref{don'tmesswithleaves} and completes the step $n \mapsto n+1$.

\subsection{The inductive step: construction}
\label{subsec:constrinducstep}

We will construct the graphs $G_{n+1}$ and $H_{n+1}$ in three steps. First, in \S\ref{subsubsec:building} we construct graphs $G'_{n+1} \supset G_n$ and $H'_{n+1} \supset H_n$ such that there is a vertex $\phi_{n+1}(v) \in H'_{n+1}$ with $G'_{n+1} - v \cong H'_{n+1} - \phi_{n+1}(v)$. This first step essentially follows the argument from \cite[\S4.6]{BEHLP17}. We will also construct a graph $F_{n+1}$ via a parallel process. 

Secondly, in \S\ref{subsubsec:extending} we will show that there are well-behaved maps from the coloured leaves of $G'_{n+1}$ and $H'_{n+1}$ to $F_{n+1} \times \mathbb{N}$, such that analogues of \ref{hypomorphism} and \ref{commutes} hold for $G'_{n+1}$, $H'_{n+1}$ and $F_{n+1}$, giving us control over the corresponding gluing sum. 

Lastly, in \S\ref{subsubsec:gluing}, we do the actual gluing process and define all objects needed for step $n+1$ of our inductive construction.

\subsubsection{Building the auxiliary graphs}
\label{subsubsec:building}

Given the two graphs $G_n$ and $H_n$, we extend each of them through their roots as indicated in Figure~\ref{constructionfigure2} to graphs $\tilde{G}_n$ and $\tilde{H}_n$ respectively. 

Since $v$ is not the root of $G_n$, there is a unique component of $G_n - v$ containing the root, which we call $G_n(r)$. Let $G_n(v)$ be the induced subgraph of $G_n$ on the remaining vertices, including $v$. We remark that if $v$ is not a cutvertex of $G_n$, then $G_n(v)$ is just a single vertex $v$. Since $\sigma_0(G_n) = k_n$ by \ref{kbigenough} and $\deg(v) \leq 5$ by \ref{123}, it follows from an iterative application of Lemma~\ref{l:miibound} that $\Sigma\left(G_n(r)\right)$ and $\Sigma\left(G_n(v)\right)$ are finite. Let $k=\tilde{k}_n = \max \{ \sigma_0(G_n), \sigma_0 \left(G_n(r)\right), \sigma_0\left(G_n(v)\right), \sigma_0(H_n) \} + 1$.

\begin{figure}[ht!]
\begin{subfigure}[t]{0.5\textwidth}
\centering
\begin{tikzpicture}[scale=0.6]

\node[draw, circle,scale=.3, fill] (Ttop) at (1,3) {};
\node at (1.4,3.5) {$\rooot{G_{n}}$};
\node[draw, circle,scale=.3, fill] (extra1) at (0,3) {};
\draw (Ttop) -- (extra1);

\node[draw, circle,scale=.3, fill] (Stop) at (7,3) {};
\node[draw, circle,scale=.3, fill] (extra2) at (8,3) {};
\draw (Stop) -- (extra2);
  
\draw (0,0) -- (2,0) -- (1,2) -- (0,0);
\draw[->,red,very thick] (1,2) -- (1,3);

\node at (1.5,-.6) {$G_n$};

\node[draw, circle,scale=.3, fill] (vtx) at (1.05,.4) {};
\node at (1.5,.4) {$v$};

\draw (6,0) -- (8,0) -- (7,2) -- (6,0);
\draw (7,2) -- (7,3);

\node at (8.0,-0.6) {$\hat{H}_n$};

\node[draw, blue, circle,scale=.3, fill] () at (.5,.0) {};
\node[draw, blue, circle,scale=.3, fill] () at (1.2,.0){};
\node[draw, blue, circle,scale=.3, fill] () at (1.7,.0)  {};
\node[draw, red, circle,scale=.3, fill] () at (.7,.0) {};
\node[draw, red, circle,scale=.3, fill] () at (1.4,.0) {};

\node[draw, red, circle,scale=.3, fill] () at (6.5,.0) {};
\node[draw, red, circle,scale=.3, fill] () at (7.2,.0) {};
\node[draw, red, circle,scale=.3, fill] () at (7.7,.0)  {};
\node[draw, blue, circle,scale=.3, fill] () at  (6.7,.0) {};
\node[draw, blue, circle,scale=.3, fill] () at (7.4,.0) {};
  
\node[draw, circle,scale=.3, fill] (gadget1) at (3.5,3) {};
\node[draw, circle,scale=.3, fill] (gadget2) at (4.5,3) {};
\draw (gadget1) -- (gadget2);
    
\draw[dashed] (Ttop) -- (gadget1);
\draw[dashed] (Stop) -- (gadget2);
    
\draw[->,yellow,very thick] (gadget1) -- (3.5,4);
\node[draw, yellow, circle,scale=.4, fill] (leaf1) at (3.5,4) {};
\node at (2.7,4.5) {$\rooot{G'_{n+1}}$};
\node[draw, green, circle,scale=.3, fill] (leaf1) at (4.5,4) {};
\draw (gadget2) -- (leaf1);
\node at (4.5,4.5) {$g$};
     
\end{tikzpicture}
\caption*{The graph $\tilde{G}_{n}$.}
\end{subfigure}%
\begin{subfigure}[t]{0.5\textwidth}
\centering
\begin{tikzpicture}[scale=0.6]

\node[draw, circle,scale=.3, fill] (Ttop) at (1,3) {};
\node[draw, circle,scale=.3, fill] (extra1) at (0,3) {};
\draw (Ttop) -- (extra1);
\node[draw, circle,scale=.3, fill] (Stop) at (7,3) {};
\node at (6.8,3.5) {$\rooot{H_{n}}$};
\node[draw, circle,scale=.3, fill] (extra2) at (8,3) {};
\draw (Stop) -- (extra2);
  
\draw (0,0) -- (1,0) -- (1.6,1) -- (1,2) -- (0,0);
\draw (1,2) -- (1,3);
\node at (1,-.6) {$\hat{G}_n(r)$};

\node at (7.5,.6) {};

\draw (2,1) -- (3,1) -- (2.5,2) -- (2,1);
\node[draw, circle,scale=.3, fill] (blob3) at (2.5,2) {};
\node[draw, circle,scale=.3, fill] (blob4) at (2.5,3) {};
\draw (blob3) -- (blob4);
\node at (3,2) {$\hat{v}$};
\node[] at (2.6,.5) {$\hat{G}_n(\hat{v})$};
\node[draw, blue, circle,scale=.3, fill] () at (2.2,1){};
\node[draw, red, circle,scale=.3, fill] () at (2.5,1)  {};
\node[draw, blue, circle,scale=.3, fill] () at (2.8,1) {};

\draw (6,0) -- (8,0) -- (7,2) -- (6,0);
\draw[->,blue,very thick] (7,2) -- (7,3);

\node at (8.0,-.6) {$H_n$};

\node[draw, blue, circle,scale=.3, fill] () at (.5,.0) {};
\node[draw, red, circle,scale=.3, fill] () at (.7,.0) {};

\node[draw, red, circle,scale=.3, fill] () at (6.5,.0) {};
\node[draw, red, circle,scale=.3, fill] () at (7.2,.0) {};
\node[draw, red, circle,scale=.3, fill] () at (7.7,.0)  {};
\node[draw, blue, circle,scale=.3, fill] () at  (6.7,.0) {};
\node[draw, blue, circle,scale=.3, fill] () at (7.4,.0) {};
  
\node[draw, circle,scale=.3, fill] (gadget1) at (3.5,3) {};
\node[draw, circle,scale=.3, fill] (gadget2) at (4.5,3) {};
\draw (gadget1) -- (gadget2);
    
\draw[dotted](Ttop) -- (gadget1);
\draw[dotted] (Stop) -- (gadget2);
    
\draw[->,green,very thick] (gadget2) -- (4.5,4);
        
\node[draw, green, circle,scale=.3, fill] (leaf1) at (4.5,4) {};
\node at (5.4,4.5) {$\rooot{H'_{n+1}}$};

\node[draw, yellow, circle,scale=.3, fill] (leaf1) at (3.5,4) {};
\node at (3.5,4.5) {$y$};

\draw (gadget1) -- (leaf1);
     
\end{tikzpicture}
\caption*{The graph $\tilde{H}_{n}$.}
\end{subfigure}
\caption{All dotted lines are maximally bare paths of length at least $k+1=\tilde{k}_n+1$.}
\label{constructionfigure2}
\end{figure}
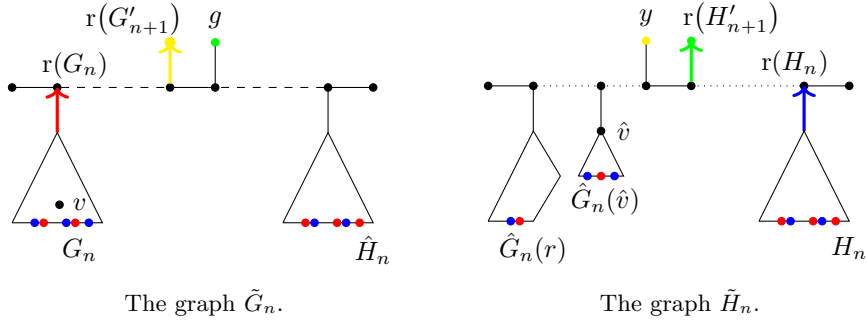

To obtain $\tilde{G}_n$, we extend $G_n$ through its root $\rooot{G_n} \in R_n$ by a path 
$$\rooot{G_n} = u_0, u_1,\dotsc,u_{p-1},u_{p}=\rooot{\hat{H}_n}$$ 
of length $p=4(\tilde{k}_n+1) + 1$, where at its last vertex $u_p$ we glue a rooted copy $\hat{H}_n$ of $H_n$ (via an isomorphism $\hat{z} \leftrightarrow z$), identifying $u_p$ with the root of $\hat{H}_n$.

Next, we add two additional leaves at $u_0$ and $u_p$, so that $\deg(\rooot{G_n})=3=\deg\left(\rooot{\hat{H}_n}\right)$. Further, we add a leaf $\rooot{G'_{n+1}}$ at $u_{2k+2}$, which will be our new root for the next tree $G'_{n+1}$; and another leaf $g$ at $u_{2k+3}$. This completes the construction of $\tilde{G}_n$.

The construction of $\tilde{H}_n$ is similar, but not entirely symmetric. For its construction, we extend $H_n$ through its root $\rooot{H_n} \in B_n$ by a path
$$\rooot{H_n} = v_p, v_{p-1}, \dotsc,v_1,v_0=\rooot{\hat{G}_n(r)}$$ 
of length $p$, where at its last vertex $v_0$ we glue a copy $\hat{G}_n(r)$ of $G_n(r)$, identifying $v_0$ with the root of $\hat{G}_n(r)$. Then, we take a copy $\hat{G}_n(\hat{v})$ of $G_n(v)$ and connect $\hat{v}$ via an edge to $v_{k+1}$.

Finally, as before, we add two leaves at $v_0$ and $v_p$ so that $\deg\left(\rooot{\hat{G}_n(r)}\right)=3=\deg\left(\rooot{H_n}\right)$. Next, we add a leaf $\rooot{H'_{n+1}}$ to $v_{2k+3}$, which will be our new root for the next tree $H'_{n+1}$; and another leaf $y$ to $v_{2k+2}$. This completes the construction of $\tilde{H}_n$.

By the induction assumption, certain leaves of $G_n$ have been coloured with one of the two colours in $R_n \cup B_n$, and also some leaves of $H_n$ have been coloured with one of the two colours in $R_n \cup B_n$. In the above construction, we colour leaves of $\hat{H}_n$, $\hat{G}_n(r)$ and $\hat{G}_n(\hat{v})$ accordingly:
\begin{align}
\begin{split}
\tilde{R}_n &= \p{R_n \cup \set{\hat{z} \in \hat{H}_n \cup \hat{G}_n(r) \cup \hat{G}_n(\hat{v})}:{z \in R_n}} \setminus \Set{\rooot{G_n},\rooot{\hat{G}_n(r)}},  \nonumber \\
\tilde{B}_n &= \p{B_n \cup \set{\hat{z} \in \hat{H}_n \cup \hat{G}_n(r) \cup \hat{G}_n(\hat{v})}:{z \in B_n}} \setminus \Set{\rooot{H_n},\rooot{\hat{H}_n}}.
\end{split}
\numberthis\label{defoftildeBn}
\end{align}

Now put $M_n := \tilde{G}_n \cup \tilde{H}_n$ and consider the following promise structure $\script{P}=\p{M_n, \vec{P}, \script{L}}$ on $M_n$, consisting of four promise edges $\vec{P} = \Set{\vec{p}_1,\vec{p}_2,\vec{p}_3, \vec{p}_4}$ and corresponding leaf sets $\script{L}=\Set{L_1, L_2, L_3, L_4}$, as follows:
\begin{align} 
\begin{split}
\bullet \; \vec{p}_1 &\text{ pointing in } G_{n} \text{ towards }\rooot{G_n}, \text{ with } L_1=\tilde{R}_n, \\
\bullet \; \vec{p}_2 &\text{ pointing in } H_n \text{ towards }\rooot{H_n}, \text{ with } L_2=\tilde{B}_n, \\
\bullet \; \vec{p}_3 &\text{ pointing in }\tilde{G}_n \text{ towards }\rooot{G'_{n+1}}, \text{ with } L_3=\Set{\rooot{G'_{n+1}},y}, \\
\bullet \; \vec{p}_4 &\text{ pointing in } \tilde{H}_n \text{ towards } \rooot{H'_{n+1}}, \text{ with } L_4=\Set{\rooot{H'_{n+1}}, g}.
\end{split}
\numberthis\label{defofpromises}
\end{align}

Note that our construction so far has been tailored to provide us with a $\vec{P}$-respecting isomorphism
\begin{align}
h \colon \tilde{G}_n - v \to \tilde{H}_n - \hat{v}. \numberthis \label{defofhn+1}
\end{align}

Consider the closure $\cl(M_n)$ with respect to the above defined promise structure $\script{P}$. 
Since $\cl(M_n)$ is a leaf-extension of $M_n$, it has two connected components, just as $M_n$. We now define
\begin{align}
\begin{split}
G'_{n+1} &= \text{ the component containing } G_n \text{ in } \cl(M_n), \\
H'_{n+1} &= \text{ the component containing } H_n \text{ in } \cl(M_n).
\end{split}
\numberthis\label{defofTnplus1andSnplus1}
\end{align}
It follows that $\cl(M_n) = G'_{n+1} \cup H'_{n+1}$. Further, since $\vec{p}_3$ and $\vec{p}_4$ are placeholder promises, $\cl(M_n)$ carries a corresponding promise structure, cf. Def.~\ref{def_closurestructure}. We define 
\begin{align}
R_{n+1} = \cl(L_3) \; \text{ and } \; B_{n+1} = \cl(L_4).
\numberthis\label{defofKnplus1andLnplus1}
\end{align}

Lastly, set
\begin{align}
\begin{split}
X_{n+1} &= X_n \cup \singleton{v}, \\
Y_{n+1} &= Y_n \cup \singleton{\hat{v}}, \\
\varphi_{n+1} &= \varphi_n \cup \Set{(v,\hat{v})}, \\
k_{n+1} &= 2(\tilde{k}_n + 1).
\end{split}\numberthis \label{nextXandY}
\end{align}

We now build $F_{n+1}$ in a similar fashion to the above procedure. That is, we take two copies of $F_n$ and join them pairwise through their roots as indicated in Figure~\ref{sketchfigconstrF} to form a graph $\tilde{F}_n$. We consider the graph $N_n = \tilde{F}_n \cup \hat{\tilde{F}}_n$, and take $F_{n+1}$ to be one of the components of $\cl(N_n)$ (unlike for $\cl(M_n)$, both components of $\cl(N_n)$ are isomorphic).

\begin{figure}[ht!]
\begin{subfigure}[t]{0.5\textwidth}
\centering
\begin{tikzpicture}[scale=0.6]

\node[draw, circle,scale=.3, fill] (Ttop) at (1,3) {};
\node at (1,3.5) {$\psi_{G_n}(\rooot{G_n})$};
\node at (7,3.5) {$\psi_{H_n}(\rooot{H_n})$};

\node[draw, circle,scale=.3, fill] (Stop) at (7,3) {};
  
\draw (0,0) -- (2,0) -- (2,2) -- (0,2) -- (0,0);
\draw[->,red,very thick] (1,2) -- (1,3);

\node at (1,1) {$F^G_n$};
  
\draw (6,0) -- (8,0) -- (8,2) -- (6,2) -- (6,0);
\draw (7,2) -- (7,3);

\node at (7,1) {$F^H_n$};

\node[draw, red, circle,scale=.3, fill] () at (1,.0) {};
\node[draw, blue, circle,scale=.3, fill] () at (0.1,.0) {};
\node[draw, blue, circle,scale=.3, fill] () at (1.9,.0) {};

\node[draw, red, circle,scale=.3, fill] () at (6.1,.0) {};
\node[draw, red, circle,scale=.3, fill] () at  (7,.0) {};
\node[draw, blue, circle,scale=.3, fill] () at (7.9,.0) {};
  
\node[draw, circle,scale=.3, fill] (gadget1) at (3.5,3) {};
\node[draw, circle,scale=.3, fill] (gadget2) at (4.5,3) {};
\draw (gadget1) -- (gadget2);
    
\draw (Ttop) -- (gadget1);

\draw (Stop) -- (gadget2);

\draw[->,yellow,very thick](gadget1) -- (3.5,4);
\node[draw, yellow, circle,scale=.4, fill] (leaf1) at (3.5,4) {};
\node[draw, green, circle,scale=.4, fill] (leaf1) at (4.5,4) {};
\draw (gadget2) -- (leaf1);
\node at (3.5,4.5) {$x$};
\node at (4.5,4.5) {$y$};
     
\end{tikzpicture}
\subcaption*{The graph $\tilde{F_n}$.}

\end{subfigure}%
\begin{subfigure}[t]{0.5\textwidth}
\centering
\begin{tikzpicture}[scale=0.6]

\node[draw, circle,scale=.3, fill] (Ttop) at (1,3) {};
\node at (1,3.5) {$\widehat{\psi_{G_n}(\rooot{G_n})}$};
\node at (7,3.5) {$\widehat{\psi_{H_n}(\rooot{H_n})}$};

\node[draw, circle,scale=.3, fill] (Stop) at (7,3) {};
  
\draw (0,0) -- (2,0) -- (2,2) -- (0,2) -- (0,0);
\draw (1,2) -- (1,3);

\node at (1.2,1) {$\hat{F}^G_n$};
  
\draw (6,0) -- (8,0) -- (8,2) -- (6,2) -- (6,0);
\draw[->,blue,very thick] (7,2) -- (7,3);

\node at (7,1) {$\hat{F}^H_n$};

\node[draw, red, circle,scale=.3, fill] () at (1,.0) {};
\node[draw, blue, circle,scale=.3, fill] () at (0.1,.0) {};
\node[draw, blue, circle,scale=.3, fill] () at (1.9,.0) {};

\node[draw, red, circle,scale=.3, fill] () at (6.1,.0) {};
\node[draw, red, circle,scale=.3, fill] () at  (7,.0) {};
\node[draw, blue, circle,scale=.3, fill] () at (7.9,.0) {};
  
\node[draw, circle,scale=.3, fill] (gadget1) at (3.5,3) {};
\node[draw, circle,scale=.3, fill] (gadget2) at (4.5,3) {};
\draw (gadget1) -- (gadget2);
    
\draw (Ttop) -- (gadget1);
\draw (Stop) -- (gadget2);
    
\draw(gadget1) -- (3.5,4);
\node[draw, yellow, circle,scale=.4, fill] (leaf1) at (3.5,4) {};
\node[draw, green, circle,scale=.4, fill] (leaf1) at (4.5,4) {};
\draw[->,green,very thick] (gadget2) -- (leaf1);
\node at (3.5,4.5) {$\hat{x}$};
\node at (4.5,4.5) {$\hat{y}$};

\end{tikzpicture}
\subcaption*{The graph $\hat{\tilde{F_n}}$.}
\end{subfigure}
\caption{The graph $N_n = \tilde{F}_n \cup \hat{\tilde{F}}_n$.}
\label{sketchfigconstrF}
\end{figure}
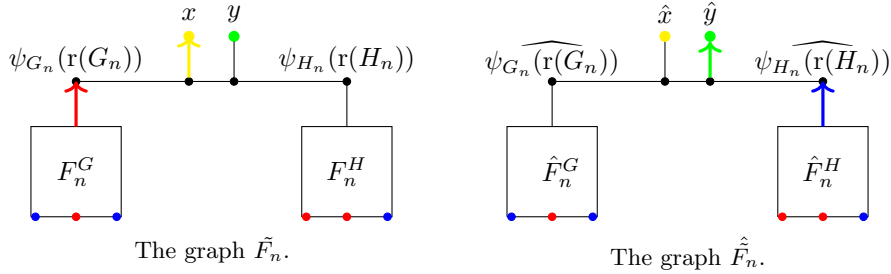

More precisely we take two copies of $F_n$, which we will denote by $F^G_n$ and $F^H_n$. We extend $F^G_n$ through the image of the $\rooot{G_n}$ under the bijection $\psi_{G_n}$ by a path
$$\psi_{G_n}(\rooot{G_n}) = u_0, u_1,u_2,u_3 = \psi_{H_n}(\rooot{H_n})$$ 
of length three, where $\psi_{G_n}(\rooot{G_n})$ is taken in $F^G_n$ and $\psi_{H_n}(\rooot{H_n})$ is taken in $F^H_n$. Further, we add a leaf $x$ at $u_1$, and another leaf $y$ at $u_2$. We will consider the graph $N_n = \tilde{F}_n \cup \hat{\tilde{F}}_n$ as in Figure \ref{sketchfigconstrF} formed by taking two disjoint copies of $\tilde{F}_n$.

By the induction assumption, certain leaves of $F_n$ have been coloured with one of the two colours in $R'_n \cup B'_n$. In the above construction, we colour leaves of $F^G_n,F^H_n,\hat{F}^G_n$ and $\hat{F}^H_n$ accordingly:
\begin{align}
\begin{split}
\tilde{R}'_n &= \Set{w \in F^G_n \cup F^H_n \cup\hat{F}^G_n \cup \hat{F}^H_n\, : \, w \in R'_n } \setminus \Set{\psi_{G_n}(\rooot{G_n}) , \widehat{\psi_{G_n}(\rooot{G_n})}} \nonumber \\
\tilde{B}'_n &= \Set{w \in F^G_n \cup F^H_n \cup\hat{F}^G_n \cup \hat{F}^H_n\, : \, w \in B'_n } \setminus \Set{\psi_{H_n}(\rooot{H_n}), \widehat{\psi_{H_n}(\rooot{H_n})}}.
\end{split}
\numberthis\label{defofBn'}
\end{align}

Now consider the following promise structure $\script{P}'=\p{N_n, \vec{P}', \script{L}'}$ on $N_n$, consisting of four promise edges $\vec{P}' = \Set{\vec{r}_1,\vec{r}_2,\vec{r}_3, \vec{r}_4}$ and corresponding leaf sets $\script{L}'=\Set{L'_1, L'_2, L'_3, L'_4}$, as follows:
\begin{align} 
\begin{split}
\bullet \; \vec{r}_1 &\text{ pointing in } F^G_n \text{ towards } \psi_{G_n}(\rooot{G_n}), \text{ with } L'_1=\tilde{R}'_n, \\
\bullet \; \vec{r}_2 &\text{ pointing in } \hat{F}^H_n \text{ towards } \widehat{\psi_{H_n}(\rooot{H_n})}, \text{ with } L'_2=\tilde{B}'_n, \\
\bullet \; \vec{r}_3 &\text{ pointing in }\tilde{F}_n \text{ towards } x, \text{ with } L'_3=\Set{x,\hat{x}}, \\
\bullet \; \vec{r}_4 &\text{ pointing in }\hat{\tilde{F}}_n \text{ towards } \hat{y}, \text{ with } L'_4=\Set{y,\hat{y}}.
\end{split}
\numberthis\label{defofpromises'}
\end{align}

Consider the closure $\cl(N_n)$ with respect to the promise structure $\script{P}'$ defined above. 
Since $\cl(N_n)$ is a leaf-extension of $N_n$, it has two connected components, and we define $F_{n+1}$ to be the component containing $F^G_n$ in $\cl(N_n)$. Since $\vec{r}_3$ and $\vec{r}_4$ are placeholder promises, $\cl(N_n)$ carries a corresponding promise structure, cf. Def.~\ref{def_closurestructure}. We define 
\begin{align}
R'_{n+1} = \cl(L'_3) \cap F_{n+1} \; \text{ and } \; B'_{n+1} = \cl(L'_4) \cap F_{n+1} .
\numberthis\label{defofnewFpromise}
\end{align}

\subsubsection{Extending maps}
\label{subsubsec:extending}

In order to glue $F_{n+1} \times \mathbb{N}$ onto $G'_{n+1}$ and $H'_{n+1}$ we will need to show that that analogues of \ref{hypomorphism} and \ref{commutes} hold for $G'_{n+1}$, $H'_{n+1}$ and $F_{n+1}$. Our next lemma is essentially \cite[Claim 4.13]{BEHLP17}, and is an analogue of \ref{hypomorphism}. We briefly remind the reader of the details, as we need to know the nature of our extensions in our later claims. 

\begin{lemma}\label{petersclaim}
There is a family of isomorphisms $\script{H}'_{n+1} = \set{h'_{n+1,x}}:{x \in X_{n+1}}$ witnessing that $G'_{n+1} - x$ and $H'_{n+1} - \varphi_{n+1}(x)$ are isomorphic for all $x \in X_{n+1}$, such that $h'_{n+1,x}$ extends $h_{n,x}$ for all $x \in X_n$. 
\end{lemma}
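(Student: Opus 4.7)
My plan is to construct the family $\script{H}'_{n+1}$ by treating two disjoint types of index separately: the newly handled vertex $x = v$, and the inherited indices $x \in X_n$.

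For $x = v$, the construction of $\tilde{G}_n$ and $\tilde{H}_n$ in \S\ref{subsubsec:building} was explicitly designed so that the bijection $h$ in~\eqref{defofhn+1} is a $\vec{P}$-respecting isomorphism between the two components $\tilde{G}_n - v$ and $\tilde{H}_n - \hat{v}$ of $M_n - \{v,\hat{v}\}$. To lift $h$ to an isomorphism between $G'_{n+1} - v$ and $H'_{n+1} - \hat{v}$, I would mimic the closure construction by induction along the $1$-step extensions $M_n = H^{(0)} \subseteq H^{(1)} \subseteq \cdots$ approximating $\cl(M_n)$: at step $k{+}1$, the newly glued copy of some $M_n(\vec{p}_i)$ at a promise leaf $\ell$ on the $\tilde{G}_n$-side matches the copy glued at $h(\ell)$ on the $\tilde{H}_n$-side (both are copies of the \emph{same} rooted graph $M_n(\vec{p}_i)$), so the partial isomorphism extends naturally. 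This is the same argument as in \cite[Claim~4.13]{BEHLP17}.

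For $x \in X_n$, the situation is more delicate: $h_{n,x}$ cannot be extended to a $\vec{P}$-respecting isomorphism $\tilde{G}_n - x \to \tilde{H}_n - \varphi_n(x)$, since $\tilde G_n\setminus G_n$ and $\tilde H_n\setminus H_n$ are structurally different by design. Instead, I would extend $h_{n,x}$ at the closure level, leaf by leaf. The key fact to invoke is~\ref{i:keepspromises}: for every red leaf $\ell\in R_n\cap V(G_n)\setminus\{\rooot{G_n}\}$ the rooted graph $G'_{n+1}(\vec{q}_\ell)$ is isomorphic to $\cl(M_n)(\vec{p}_1)$, and analogously for blue leaves and $\vec{p}_2$. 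By~\ref{hypomorphism}, $h_{n,x}$ preserves leaf colours; and by~\ref{colouredroots}, $\rooot{G_n}$ is red while $\rooot{H_n}$ is blue, so $h_{n,x}(\rooot{G_n})$ is a genuine red promise leaf in $L_1=\tilde R_n$ (in particular distinct from $\rooot{H_n}$), and the same conclusion applies on the $H'_{n+1}$-side. Choosing, for each coloured leaf $\ell$ of $G_n$, a rooted-graph isomorphism
\[
\sigma_\ell : G'_{n+1}(\vec{q}_\ell) \to H'_{n+1}(\vec{q}_{h_{n,x}(\ell)})
\]
taking $\ell \mapsto h_{n,x}(\ell)$, I would then set $h'_{n+1,x} := h_{n,x} \cup \bigcup_\ell \sigma_\ell$. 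Since each $\sigma_\ell$ is a rooted isomorphism and $h_{n,x}$ is an isomorphism on $G_n - x$, and edges between the pieces are only the $\vec{q}_\ell$ (correctly matched since $\sigma_\ell$ preserves roots), this is an isomorphism $G'_{n+1} - x \to H'_{n+1} - \varphi_n(x)$ extending $h_{n,x}$.

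The main bookkeeping step to verify is that the pieces truly partition $G'_{n+1} - x$, and in particular that the special leaf $\rooot{G_n}$ is handled consistently: although $\rooot{G_n}$ is a red leaf of $G_n$, it is \emph{not} a promise leaf of $\script{P}$ (it was removed when defining $\tilde R_n$), so no closure attachment is glued at it directly; however, by definition $G'_{n+1}(\vec{q}_{\rooot{G_n}}) = \cl(M_n)(\vec p_1)$, which \emph{natively} contains the entire extra structure $\tilde G_n\setminus G_n$ together with all its closure attachments. Hence no vertex of $G'_{n+1} - x$ is forgotten, and the special status of the root is absorbed into this single piece, where it matches via $\sigma_{\rooot{G_n}}$ the promise-closure attachment behind $h_{n,x}(\rooot{G_n})\in L_1$ on the $H'_{n+1}$-side.
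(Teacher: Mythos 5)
Your proof is correct and follows the same basic structure as the paper's: split into the fresh index $x=v$ (lift the $\vec{P}$-respecting isomorphism $h$ of \eqref{defofhn+1} through the $1$-step extensions, exactly as in \cite[Claim~4.13]{BEHLP17}) and the inherited indices $x\in X_n$ (extend $h_{n,x}$ by matching up the pendant rooted graphs $G'_{n+1}(\vec{q}_\ell)$ and $H'_{n+1}(\vec{q}_{h_{n,x}(\ell)})$). Your treatment of the root is also the paper's: $\vec{q}_{\rooot{G_n}}=\vec{p}_1$ so that piece is $\cl(M_n)(\vec p_1)$ by definition, and $h_{n,x}(\rooot{G_n})$ lies in $\tilde R_n$ because by \ref{colouredroots} the two roots carry different colours and by \ref{hypomorphism} the map $h_{n,x}$ preserves colours. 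The symmetric fact that some blue leaf of $G_n$ maps to $\rooot{H_n}$ is handled by the same mechanism, which you leave implicit.

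The one substantive difference is the choice of the piece isomorphisms $\sigma_\ell$. You invoke only \ref{i:keepspromises}, so your $\sigma_\ell$ are arbitrary rooted isomorphisms; the paper invokes \ref{i:keepslabelledpromises} and fixes specific $\cl(\vec{P})$-respecting isomorphisms $f_\ell\colon\cl(M_n)(\vec q_\ell)\to\cl(M_n)(\vec p_i)$, then sets $\sigma_\ell=f_{h_{n,x}(\ell)}^{-1}\circ f_\ell$. For the lemma as literally stated, your version suffices. However, the stronger construction is consumed immediately afterwards: Lemma~\ref{maxclaim} requires $h'_{n+1,x}$ to restrict to a map $\script{L}(G'_{n+1})\to\script{L}(H'_{n+1})$ (i.e.\ to respect the new colour classes $R_{n+1}=\cl(L_3)$ and $B_{n+1}=\cl(L_4)$), and its commuting-diagram computation uses the identity $f_{h_{n,x}(\ell)}\circ h'_{n+1,x}\restriction G'_{n+1}(\vec q_\ell)=f_\ell$ verbatim; likewise the later verification of \ref{hypomorphism} at stage $n+1$ needs the $\cl(\vec P)$-respecting property. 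With arbitrary $\sigma_\ell$ none of this is available. So you should upgrade: require each $\sigma_\ell$ to be $\cl(\vec P)$-respecting and take the specific form $f_{h_{n,x}(\ell)}^{-1}\circ f_\ell$; then your argument coincides with the paper's.
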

\begin{proof}
The graphs $G'_{n+1}$ and $H'_{n+1}$ defined in (\ref{defofTnplus1andSnplus1}) are obtained from $\tilde{G}_n$ and $\tilde{H}_n$ by attaching at every leaf in $\tilde{R}_n$ a copy of the rooted graph $\cl(M_n)(\vec{p}_1)$, and by attaching at every leaf in $\tilde{B}_n$ a copy of the rooted graph $\cl(M_n)(\vec{p}_2)$ by \ref{i:keepspromises}.

From (\ref{defofhn+1}) we know that there is a $\vec{P}$-respecting isomorphism 
$$h \colon \tilde{G}_n - v \to \tilde{H}_n - \varphi_{n+1}(v).$$
In other words, $h$ maps promise leaves in $L_i \cap V(\tilde{G}_n)$ bijectively to the promise leaves in $L_i \cap V(\tilde{H}_n)$ for all  $i=1,2,3,4$.

There is for each $\ell \in \tilde{R}_n \cup \tilde{B}_n \cup \{ \rooot{G_n}, \rooot{H_n}\}$ a $\cl(\vec{P})$-respecting isomorphism of rooted graphs
\begin{align}
f_\ell \colon  \cl(M_n)(\vec{q}_{\ell}) \cong \cl(M_n)(\vec{p}_i) \numberthis \label{f_ell}
\end{align}
given by \ref{i:keepslabelledpromises} for $\ell \in (\tilde{R}_n \cup \tilde{B}_n)$, where $i$ equals blue or red depending on whether $\ell \in \tilde{R}_n$ or $\tilde{B}_n$, and for the roots of $G_n$ and $H_n$ we have $\vec{q_r}= \vec{p_i}$ and the isomorphism is the identity. Hence, for each $\ell$,
\[
f_{h(\ell)}^{-1} \circ f_\ell \colon \cl(M_n)(\vec{q}_{\ell}) \cong \cl(M_n)(\vec{q}_{h(\ell)})
\]
is a $\cl(\vec{P})$-respecting isomorphism of rooted graphs. By combining the isomorphism $h$ between $\tilde{G}_n - v$ and $\tilde{H}_n - \varphi_{n+1}(v)$ with these isomorphisms between each $\cl(M_n)(\vec{q}_{\ell})$ and $\cl(M_n)(\vec{q}_{h(\ell)})$ we get a $\cl(\vec{P})$-respecting isomorphism 
$$h'_{n+1,v} \colon G'_{n+1} - v \to H'_{n+1} - \varphi_{n+1}(v).$$

To extend the old isomorphisms $h_{n,x}$ (for $x \in X_n$), note that $G'_{n+1}$ and $H'_{n+1}$ are obtained from $G_n$ and $H_n$ by attaching at every leaf in $R_n$ a copy of the rooted graph $\cl(M_n)(\vec{p}_1)$, and similarly by attaching at every leaf in $B_n$ a copy of the rooted graph $\cl(M_n)(\vec{p}_2)$. By induction assumption \ref{hypomorphism}, for each $x \in X_n$ the isomorphism
$$ h_{n,x} \colon G_n - x \rightarrow H_n - \varphi_n(x) $$
maps the red leaves of $G_n$ bijectively to the red leaves of $H_n$, and the blue leaves of $G_n$ bijectively to the blue leaves of $H_n$. Thus, by (\ref{f_ell}),
$$f_{h_{n,x}(\ell)}^{-1} \circ f_\ell \colon \cl(M_n)(\vec{q}_\ell) \cong \cl(M_n)(\vec{q}_{h_{n,x}(\ell)})$$
are $\cl(\vec{P})$-respecting isomorphisms of rooted graphs for all $\ell \in (R_n \cup B_n) \cap V(G_n)$. By combining the isomorphism $h_{n,x}$ between $G_n - x$ and $H_n - \varphi_n(x)$ with these isomorphisms between each $\cl(M_n)(\vec{q}_{\ell})=G'_{n+1}(\vec{q}_{\ell})$ and $\cl(M_n)(\vec{q}_{h_{n,x}(l)})=H'_{n+1}(\vec{q}_{h_{n,x}(l)})$, we obtain a $\cl(\vec{P})$-respecting extension 
\[
h'_{n+1,x} \colon G'_{n+1} - x \rightarrow H'_{n+1} - \varphi_n(x). \qedhere
\]
\end{proof}

Our next claim should be seen as an approximation to property \ref{commutes}. Recall that $\cl(N_n)$ has two components $F_{n+1} \cong \hat{F}_{n+1}$.
\begin{lemma}\label{maxclaim}
There are colour-preserving bijections 
$$\psi_{G'_{n+1}} \colon V(G'_{n+1}) \cap \p{R_{n+1} \cup B_{n+1}} \to R'_{n+1} \cup B'_{n+1},$$  
$$\psi_{H'_{n+1}} \colon V(H'_{n+1}) \cap \p{R_{n+1} \cup B_{n+1}} \to 
\hat{R}'_{n+1} \cup \hat{B}'_{n+1},$$
and a family of isomorphisms 
$$\hat{\Pi}_{n+1} = \set{\hat{\pi}_{n+1,x} \colon F_{n+1} \to \hat{F}_{n+1}}:{x \in X_{n+1}}$$
such that for each $x \in X_{n+1}$ the following diagram commutes.
\begin{center}
\begin{tikzpicture}
  \matrix (m)
    [
      matrix of math nodes,
      row sep    = 3em,
      column sep = 9em
    ]
    {
      \script{L}(G'_{n+1})              & \script{L}(H'_{n+1}) \\
      \script{L}'(F_{n+1}) &  \script{L}'(\hat{F}_{n+1})           \\
    };
  \path
    (m-1-1) edge [->] node [left] {\scriptsize{$\psi_{G'_{n+1}}$}} (m-2-1)
    (m-1-1.east |- m-1-2)
      edge [->] node [above] {\scriptsize{$h'_{n+1,x} \restriction \script{L}(G'_{n+1})$}} (m-1-2);
        \path
     (m-2-1) edge [->] node [above] {\scriptsize{$\hat{\pi}_{n+1,x}  \restriction \script{L}'(F_{n+1})$}} (m-2-2);
        \path
     (m-1-2) edge [->] node [right] {\scriptsize{$\psi_{H'_{n+1}}$}} (m-2-2);
\end{tikzpicture}
\end{center}
\end{lemma}

\begin{proof} 
{\em Defining $\psi_{G'_{n+1}}$ and $\psi_{H'_{n+1}}$.} By construction, we can combine the maps $\psi_{G_n}$ and $\psi_{H_n}$ to obtain a natural colour-preserving bijection 
$$\psi \colon \script{L}\p{M_n} \to \script{L}'\p{N_n},$$ 
which satisfies the assumptions of Lemma~\ref{l:blabla}. Thus, by Corollary~\ref{cor_blablablabla}, there are bijections 
$$\alpha^i \colon \script{L}\p{\cl(M_n)(\vec{p}_i)} \to \script{L}'\p{\cl(N_n)(\vec{r}_i)}$$ 
which are colour-preserving with respect to the promise structures $\cl(\script{P})$ and $\cl(\script{P}')$ on $\cl(M_n)$ and $\cl(N_n)$, respectively.

We now claim that $\psi$ extends to a colour-preserving bijection (w.r.t.\ $\cl(\script{P})$)
$$\cl(\psi) \colon \script{L}\p{\cl(M_n)} \to \script{L}'\p{\cl(N_n)}.$$
Indeed, by \ref{i:keepslabelledpromises}, for every $\ell \in \tilde{R}'_n \cup \tilde{B}'_n$, there is a $\vec{P'}$-respecting rooted isomorphism
\begin{align}
g_\ell \colon \cl(N_n)(\vec{q}_\ell) \to \cl(N_n)(\vec{r}_i), \numberthis \label{g_ell}
\end{align}
where $i$ equals blue or red depending on whether $\ell \in \tilde{R}'_n$ or $\tilde{B}'_n$. As in the case of (\ref{f_ell}) we define the maps $g_{r}$ with $\vec{q}_r = \vec{r}_i$ for the roots of $F^G_n$ and $\hat{F}^H_n$ respectively to be the identity.
Together with the rooted isomorphisms $f_\ell$ from (\ref{f_ell}), it follows that for each $\ell \in  \tilde{R}_n \cup \tilde{B}_n \cup \{ \rooot{G_n}, \rooot{H_n}\}$, the map
$$\psi_\ell = g_{\psi(\ell)}^{-1} \circ \alpha^i \circ f_\ell \colon \script{L}\p{\cl(M_n)(\vec{q}_\ell)} \to \script{L}\p{\cl(N_n)(\vec{q}_{\psi(\ell)})}$$
is a colour-preserving bijection. Now combine $\psi$ with the individual $\psi_\ell$ to obtain $\cl(\psi)$. We then put  
$$\psi_{G'_{n+1}} = \cl(\psi) \restriction G'_{n+1} \quad \text{and} \quad \psi_{H'_{n+1}} = \cl(\psi) \restriction H'_{n+1}.$$

{\em Defining isomorphisms $\hat{\Pi}_{n+1}$.} To extend the old isomorphisms $\pi_{n,x}$, given by the induction assumption, note that by \ref{i:keepspromises}, $F_{n+1}$ is obtained from $F_n$ by attaching at every leaf in $R'_n$ a copy of the rooted graph $F_{n+1}(\vec{r}_1)$, and similarly by attaching at every leaf in $B'_n$ a copy of the rooted graph $F_{n+1}(\vec{r}_2)$. For each $x \in X_n$ let us write $\hat{\pi}_{n,x}$ for the map sending each $z \in F^G_n$ to the copy of $\pi_{n,x}(z)$ in $\hat{F}^H_n$. By the induction assumption \ref{commutes}, for each $x \in X_n$ the isomorphism
$$ \hat{\pi}_{n,x} \colon F^G_n \rightarrow \hat{F}^H_n$$
preserves the colour of red and blue leaves. Thus, using the maps $g_\ell$ from (\ref{g_ell}), the mappings
$$g_{\hat{\pi}_{n,x}(\ell)}^{-1} \circ g_\ell \colon \cl(N_n)(\vec{q}_\ell) \cong \cl(N_n)(\vec{q}_{\hat{\pi}_{n,x}(\ell)}) $$
are $\cl(\vec{P'})$-respecting isomorphisms of rooted graphs for all $\ell \in R'_n \cup B'_n$. By combining the isomorphism $\pi_{n,x}$ with these isomorphisms between each $F_{n+1}(\vec{q}_{\ell})$ and $\hat{F}_{n+1}(\vec{q}_{\hat{\pi}_{n,x}(\ell)})$, we obtain a $\cl(\vec{P}')$-respecting extension 
\[
\hat{\pi}_{n+1,x} \colon F_{n+1} \rightarrow \hat{F}_{n+1}.
\]
For the new isomorphism $\hat{\pi}_{n+1,v} \colon F_{n+1} \to \hat{F}_{n+1}$, we simply take the `identity' map which extends the map sending each $z \in \tilde{F}_n$ to $\hat{z} \in \hat{\tilde{F}}_n$.

{\em The diagram commutes.} To see that the new diagram above commutes, for each $x \in X_n$ it suffices to check that for all $\ell \in (R_n \cup B_n) \cap V(G_n)$ we have
$$\hat{\pi}_{n+1,x} \circ \psi_{G'_{n+1}} \restriction \script{L}\p{G'_{n+1}(\vec{q}_\ell)} = \psi_{H'_{n+1}} \circ h'_{n+1,x}\restriction \script{L}\p{G'_{n+1}(\vec{q}_\ell)},$$
which by construction of $\cl(\psi)$ above is equivalent to showing that 
$$\hat{\pi}_{n+1,x} \circ \psi_\ell  = \psi_{h_{n,x}(\ell)} \circ h'_{n+1,x}.$$
By definition of $\psi_\ell$ this holds if and only if
$$\hat{\pi}_{n+1,x} \circ g_{\psi(\ell)}^{-1} \circ \alpha^i \circ f_\ell  =g_{\psi(h_{n,x}(\ell))}^{-1} \circ \alpha^i \circ f_{h_{n,x}(\ell)} \circ h'_{n+1,x}.$$
Now by construction of $\hat{\pi}_{n+1,x}$ and  $h'_{n+1,x}$, we have
$$\hat{\pi}_{n+1,x} \circ g_{\psi(\ell)}^{-1} = g_{\hat{\pi}_{n,x}(\psi(\ell))}^{-1} \quad \text{and} \quad f_{h_{n,x}(\ell)} \circ h'_{n+1,x} = f_\ell.$$
Hence, the above is true if and only if
$$g_{\hat{\pi}_{n,x}(\psi(\ell))}^{-1} \circ \alpha^i \circ f_\ell  =g_{\psi(h_{n,x}(\ell))}^{-1} \circ \alpha^i \circ f_\ell.$$
Finally, this last line holds since $\psi(\ell) = \psi_{G_n}(\ell)$ and $\psi(h_{n,x}(\ell)) = \psi_{H_n}(h_{n,x}(\ell))$ by definition of $\psi$, and because 
$$\hat{\pi}_{n,x} \circ \psi_{G_n} (\ell) = \psi_{H_n} \circ h_{n,x}(\ell)$$
by the induction assumption.

For $\hat{\pi}_{n+1,v}$ we see that, as above, it will be sufficient to show that for all $\ell \in (\tilde{R}_n \cup \tilde{B}_n) \cap V(\tilde{G}_n)$ we have
$$\hat{\pi}_{n+1,v} \circ \psi_\ell  = \psi_{h'_{n+1,v}(\ell)} \circ h'_{n+1,v},$$
which reduces as before to showing that,
$$g_{\hat{\pi}_{n+1,v}(\psi(\ell))}^{-1} \circ \alpha^i \circ f_\ell  =g_{\psi(h'_{n+1,v}(\ell))}^{-1} \circ \alpha^i \circ f_\ell.$$
Recall that, $\hat{\pi}_{n+1,v}$ sends each $v$ to $\hat{v}$ and also, since $h'_{n+1,v} \restriction \tilde{G_n} = h$, the image of every leaf $\ell \in (\tilde{R}_n \cup \tilde{B}_n) \cap V(\tilde{G}_n)$ is simply $\hat{l} \in \hat{G}_n(v) \cup \hat{G}_n(r)$. Hence we wish to show that
$$g_{\hat{(\psi(\ell))}}^{-1} \circ \alpha^i \circ f_\ell =g_{\psi(\hat{l})}^{-1} \circ \alpha^i \circ f_\ell,$$
that is, 
$$\hat{(\psi(\ell))} = \psi(\hat{l}),$$
which follows from the construction of $\psi$.
\end{proof}

\subsubsection{Gluing the graphs together}
\label{subsubsec:gluing}
Let us take the cartesian product of $F_{n+1}$ with a ray, which we simply denote by $F_{n+1} \times \mathbb{N}$. If we identify $F_{n+1}$ with the subgraph $F_{n+1} \times \{0\}$, then we can interpret both $\psi_{G'_{n+1}}$ and $\psi_{H'_{n+1}}$ as maps from $\script{L}(G'_{n+1})$ and $\script{L}(H'_{n+1})$ to a set of vertices in $F_{n+1} \times \mathbb{N}$, under the natural isomorphism between $\hat{F}_{n+1}$ and $F_{n+1}$.

Instead of using the function $\psi_{G'_{n+1}}$ directly for our gluing operation, we identify, for every leaf $\ell$ in $\script{L}(G'_{n+1})$ the \emph{unique neighbour} of $\ell$ with $\psi_{G'_{n+1}}(l)$. Formally, define a bijection
$\chi_{G_{n+1}}$ between the neighbours of $\script{L}(G'_{n+1})$ and $\script{L}'(F_{n+1})$ via
\begin{align}
\chi_{G_{n+1}} = \set{(z_1,z_2)}:{\exists \ell \in \script{L}(G'_{n+1}) \text{ s.t. } z_1 \in N(\ell) \text{ and }\psi_{G'_{n+1}} (l) = z_2}, \numberthis \label{chi_G}
\end{align}
and similarly
\begin{align}
\chi_{H_{n+1}} = \set{(z_1,z_2)}:{\exists \ell \in \script{L}(H'_{n+1}) \text{ s.t. } z_1 \in N(\ell) \text{ and }\psi_{H'_{n+1}} (l) = z_2}. \numberthis \label{chi_H}
\end{align}
Since two promise leaves in $G'_{n+1}$ or $H'_{n+1}$ are never adjacent to the same vertex, $\chi_{G_{n+1}}$ and $\chi_{H_{n+1}}$ are indeed bijections. Moreover, since all promise leaves were proper, the vertices in the domain of $\chi_{G_{n+1}}$ and $\chi_{H_{n+1}}$ have degree at least 3.  Using our notion of gluing-sum (see Def.~\ref{gluingsum}), we now define
\begin{align}
G_{n+1} := G'_{n+1} \oplus_{\chi_{G_{n+1}}} (F_{n+1} \times \mathbb{N}) \text{    and     } H_{n+1} := H'_{n+1} \oplus_{\chi_{H_{n+1}}} (F_{n+1} \times \mathbb{N}). \numberthis \label{gluing}
\end{align}

We consider $R_{n+1}$, $B_{n+1}$, $X_{n+1}$ and $Y_{n+1}$ as subsets of $G_{n+1}$ and $H_{n+1}$ in the natural way. Then $\psi_{G_{n+1}}$ and $\psi_{H_{n+1}}$ can be taken to be the maps $\psi_{G'_{n+1}}$ and $\psi_{H'_{n+1}}$, again identifying $\hat{F}_{n+1}$ with $F_{n+1}$ in the natural way. We also take the roots of $G_{n+1}$ and $H_{n+1}$ to be the roots of $G'_{n+1}$ and $H'_{n+1}$ respectively

This completes the construction of graphs $G_{n+1}$, $H_{n+1}$, and $F_{n+1}$, the coloured leaf sets $R_{n+1}, B_{n+1}, R'_{n+1}$, and $B'_{n+1}$, the bijections $\psi_{G_{n+1}}$ and $\psi_{H_{n+1}}$, as well as $\varphi_{n+1} \colon X_{n+1} \to Y_{n+1}$, and $k_{n+1} = 2 (\tilde{k}_n +1)$. In the next section, we show the existence of families of isomorphisms $\script{H}_{n+1}$ and $\Pi_{n+1}$, and verify that \ref{nested}--\ref{commutes} are indeed satisfied for the $(n+1)^{\text{th}}$ instance.

\subsection{The inductive step: verification}
\begin{lemma}
We have $G_{n} \subset G_{n+1}$, $H_n \subset H_{n+1}$, $\Delta(G_{n+1}),\Delta(H_{n+1}) \leq 5$, $\Delta(F_{n+1}) \leq 3$, and the roots of $G_{n+1}$ and $H_{n+1}$ are in $R_{n+1}$ and $B_{n+1}$ respectively.
\end{lemma}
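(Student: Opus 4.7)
The plan is to verify the five assertions of the lemma in turn, treating the degree bounds as the main substance of the argument.

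For the inclusions $G_n \subset G_{n+1}$ and $H_n \subset H_{n+1}$ as induced subgraphs, I would trace through the chain $G_n \subset \tilde{G}_n \subset M_n \subset \cl(M_n)$, with $G_n$ contained in the component $G'_{n+1}$. Along this chain no vertex of $G_n$ is ever identified with another, and no new edge is ever placed within $V(G_n)$, because the relevant extensions and the closure operation only attach new material at the promise leaves of $G_n$. The final step, the gluing sum from (\ref{gluing}), only identifies the neighbour $N(\ell)$ of each placeholder leaf $\ell \in R_{n+1} \cup B_{n+1}$ with a vertex of $F_{n+1}$, as prescribed by (\ref{chi_G}). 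The crucial observation is that every such $N(\ell)$ lies outside $V(G_n)$: either on the new connecting path inside $\tilde{G}_n$ (for the four original placeholder leaves) or inside a freshly attached copy of $\cl(M_n)(\vec{p_i})$ (for all further placeholder leaves created during closure). This forces $G_n$ to sit inside $G_{n+1}$ as an induced subgraph, and the argument for $H_n$ is symmetric. The root assertions are then immediate from the definitions: $\rooot{G_{n+1}} = \rooot{G'_{n+1}} \in L_3 \subseteq \cl(L_3) = R_{n+1}$ via (\ref{defofpromises}) and (\ref{defofKnplus1andLnplus1}), and analogously $\rooot{H_{n+1}} \in B_{n+1}$.

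For $\Delta(F_{n+1}) \leq 3$: the inductive hypothesis gives $\Delta(F_n) \leq 3$, and the length-$3$ connecting path with one extra leaf at each of its two interior vertices lifts the former leaves $\psi_{G_n}(\rooot{G_n})$ and $\psi_{H_n}(\rooot{H_n})$ to degree $2$ and the interior vertices to degree $3$, so that $\Delta(\tilde{F}_n) \leq 3$. Since the closure construction preserves maximum degree (the remark following Proposition~\ref{p:closure}), one concludes $\Delta(F_{n+1}) = \Delta(\cl(N_n)) \leq \Delta(N_n) \leq 3$.

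The main substance, and the main obstacle, is the bound $\Delta(G_{n+1}), \Delta(H_{n+1}) \leq 5$; I focus on $G_{n+1}$, since $H_{n+1}$ is symmetric. A direct inspection of the construction of $\tilde{G}_n$ yields $\Delta(\tilde{G}_n) \leq 5$: the roots $\rooot{G_n}$ and $\rooot{\hat{H}_n}$ pass from degree $1$ to degree $3$ (one new path-neighbour plus one extra leaf), the internal vertices of the new path carry at most one further leaf and so have degree at most $3$, and every vertex of $G_n$ and $\hat{H}_n$ retains the inductive bound of $5$. Hence $\Delta(M_n) \leq 5$, and by the max-degree-preservation of $\cl$, $\Delta(G'_{n+1}) \leq 5$. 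The delicate step is the gluing sum with $F_{n+1} \times \mathbb{N}$, which collapses $N(\ell)$ and $\psi_{G'_{n+1}}(\ell) \in F_{n+1} \times \{0\}$ into a single vertex for every placeholder leaf $\ell$. The image $\psi_{G'_{n+1}}(\ell)$ is a leaf in $F_{n+1}$ and thus has degree $1 + 1 = 2$ inside $F_{n+1} \times \mathbb{N}$, so it remains to verify that $\deg_{G'_{n+1}}(N(\ell)) = 3$ in every case. For the four originals $\rooot{G'_{n+1}}, y, \rooot{H'_{n+1}}, g$ this is by inspection: they are each hung on one of the degree-$2$ path vertices $u_{2k+2}, v_{2k+2}, v_{2k+3}, u_{2k+3}$, pushing that vertex to degree exactly $3$. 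Every further placeholder leaf arises inside a rooted copy of some $\cl(M_n)(\vec{p_i})$ attached during closure at a red or blue leaf, and by \ref{i:keepspromises} the local degree-$3$ structure around the original placeholder is faithfully reproduced in each such copy. Hence every identified vertex ends up with degree $3 + 2 = 5$; non-identified vertices inside $F_{n+1} \times \mathbb{N}$ have degree at most $\Delta(F_{n+1}) + 2 \leq 5$; and non-identified vertices of $G'_{n+1}$ retain their bound of $5$. The same argument, applied to $H'_{n+1}$, gives $\Delta(H_{n+1}) \leq 5$.
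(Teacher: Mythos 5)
Your proof is correct and follows essentially the same approach as the paper's, which is much terser: the paper simply observes that the inclusions hold by construction, that "we glued together degree 3 and degree 2 vertices" (relying on the properness of promise leaves and $\Delta(F_{n+1}) \leq 3$), and that the roots land in $R_{n+1}$, $B_{n+1}$ because $\vec{p}_3$, $\vec{p}_4$ are placeholder promises. You have fleshed out the degree-5 verification in full detail — in particular verifying that every gluing vertex $N(\ell)$ has degree exactly $3$ rather than merely $\geq 3$, which the paper leaves implicit — but the underlying argument is the same.
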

\begin{proof}
We note that $G_n \subset G'_{n+1}$ by construction. Hence, it follows that 
\[
G_n \subset G'_{n+1} \subset G'_{n+1}\oplus_{\chi_{G_{n+1}}} (F_{n+1} \times \mathbb{N}) = G_{n+1},
\]
and similarly for $H_n$. Since we glued together degree 3 and degree 2 vertices, and $\Delta(G_n),\Delta(H_n) \leq 5$ and $\Delta(F_n) \leq 3$, it is clear that the same bounds hold for $n+1$. Finally, since the root of $\tilde{G}_n$ was a placeholder promise, and  $R_{n+1}$ was the corresponding set of promise leaves in $\cl(\tilde{G}_n)$, it follows that the root of $G'_{n+1}$ is in $R_{n+1}$, and hence so is the root of $G_{n+1}$. A similar argument shows that the root of $H_{n+1}$ is in $B_{n+1}$.
\end{proof}

\begin{lemma}
We have $\sigma_0(G_{n+1}) = \sigma_0(H_{n+1}) = k_{n+1}$.
\end{lemma}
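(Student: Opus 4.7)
The value $k_{n+1} = 2(\tilde{k}_n+1)$ has been chosen precisely to match the longest mii-segment introduced by the new connecting paths in $\tilde{G}_n$ and $\tilde{H}_n$, so my plan is to establish this value as both a lower and an upper bound for $\sigma_0(G_{n+1})$ (and symmetrically for $H_{n+1}$).

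For the lower bound $\sigma_0(G_{n+1})\ge k_{n+1}$, I would exhibit the explicit mii-path $u_0,u_1,\dots,u_{2k+2}$ in $\tilde{G}_n\subseteq G_{n+1}$ (writing $k=\tilde{k}_n$), which has length $2k+2=k_{n+1}$. Its interior vertices $u_1,\dots,u_{2k+1}$ are never members of any leaf set and so retain degree $2$ through the closure and the gluing; its endpoints $u_0=\rooot{G_n}$ and $u_{2k+2}$ have degree $3$ in $\tilde{G}_n$ and remain of degree $\neq 2$ in $G_{n+1}$ (in particular, $u_{2k+2}$ becomes degree $5$ after the gluing with $F_{n+1}\times\N$, since its adjacent placeholder-leaf $\rooot{G'_{n+1}}$ triggers its identification via $\chi_{G_{n+1}}$). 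A symmetric witness such as $v_{2k+3},\dots,v_p$ works for $H_{n+1}$.

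For the upper bound, the plan is a two-step reduction. First, the degree computation $\deg_{F_{n+1}\times\N}((f,i))=\deg_{F_{n+1}}(f)+\deg_\N(i)$ shows that the only degree-$2$ vertices of $F_{n+1}\times\N$ are the pairs $(f,0)$ with $f$ a leaf of $F_{n+1}$; but these are precisely the vertices identified by $\chi_{G_{n+1}}$ with a degree-$\ge 3$ vertex of $G'_{n+1}$, so they have degree $\ge 5$ in $G_{n+1}$. Hence no interior vertex of an mii-path of $G_{n+1}$ lies in $V(F_{n+1}\times\N)\setminus V(G'_{n+1})$, and any mii-path of length $\ge 2$ must be contained in $G'_{n+1}$. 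Second, inside $G'_{n+1}$, every attachment point $\ell\in\tilde{R}_n\cup\tilde{B}_n$ at which a copy of $M_n(\vec{p}_i)$ was glued has degree $3$ in $\cl(M_n)$, since the degree-$1$ leaf $\ell$ merges with the root of $M_n(\vec{p}_i)$ (which has degree $2$, the promise edge having been removed). Consequently mii-paths cannot traverse attachment points as interior vertices, and each mii-path is contained in a single ``piece''---the base $M_n$ or a single copy of $M_n(\vec{p}_i)$---whose internal structure bounds its length: the connecting paths split at the degree-$\ge 3$ vertices $u_0,u_{2k+2},u_{2k+3},u_p$ of $\tilde{G}_n$ (and $v_0,v_{k+1},v_{2k+2},v_{2k+3},v_p$ of $\tilde{H}_n$) into segments of length at most $k_{n+1}$; mii-paths inside the $G_n$- or $H_n$-parts have length at most $\tilde{k}_n-1<k_{n+1}$ by the definition of $\tilde{k}_n$; and the added leaves contribute only edges of length $1$.

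The main obstacle I anticipate is the bookkeeping required across the three stacked construction layers (closure of $M_n$, closure of $N_n$, and the gluing with $F_{n+1}\times\N$). In particular, the first step of the upper bound relies on the placeholder-leaves remaining actual leaves in $\cl(M_n)$ so that $\chi_{G_{n+1}}$ acts on degree-$\ge 3$ neighbors as claimed, and the second step relies on every such ``attachment point'' genuinely becoming degree $3$ in $\cl(M_n)$ and not being further modified by later layers of closure; both points must be verified explicitly.
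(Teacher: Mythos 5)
Your proposal is correct and follows essentially the same three-layer approach as the paper's proof: establish $\sigma_0(\tilde G_n)=\sigma_0(\tilde H_n)=k_{n+1}$, show this is preserved by the promise closure (since the extension is proper, attachment points become degree-$3$ vertices so no mii-path can cross them), and observe that gluing with $F_{n+1}\times\N$ cannot create long mii-paths because its degree-$2$ vertices are exactly the pairs $(\ell,0)$ with $\ell$ a leaf of $F_{n+1}$, which are identified via $\chi_{G_{n+1}}$ with degree-$3$ vertices of $G'_{n+1}$. You simply unpack the degree bookkeeping (explicit witness path for the lower bound; the $1+2=3$ degree count at attachment points; the $3+2=5$ degree count at glued vertices) that the paper compresses into ``by construction'' and ``it is a simple check''.
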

\begin{proof}
By construction we have that $\sigma_0(\tilde{G}_n) = \sigma_0(\tilde{H}_n) = k_{n+1}$. Since $G'_{n+1}$ and $H'_{n+1}$ are realised as components of the promise closure of $M_n$, and this was a proper extension, it is a simple check that $\sigma_0(G'_{n+1}) = \sigma_0(H'_{n+1}) = k_{n+1}$. Also note that $F_{n+1} \times \N$ has no maximally bare paths of length bigger than two, since the vertices of degree two in $F_{n+1} \times \N$ are precisely those of the form $(\ell,0)$ with $\ell$ a leaf of $F_{n+1}$.

Since $G'_{n+1} \oplus_{\chi_{G_{n+1}}} (F_{n+1} \times \mathbb{N})$ is formed by gluing a set of degree-two vertices of $F_{n+1} \times \mathbb{N}$ to a set of degree-three vertices in $G'_{n+1}$, it follows that $\sigma_0(G_{n+1}) = k_{n+1}$ as claimed. A similar argument shows that $\sigma_0(H_{n+1}) = k_{n+1}$.
\end{proof}

\begin{lemma}
The graphs $G_{n+1}$ and $H_{n+1}$ are spectrally distinguishable.
\end{lemma}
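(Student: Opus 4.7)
The plan is to reduce the claim to showing that $G'_{n+1}$ and $H'_{n+1}$ are spectrally distinguishable, and then to locate a distinguishing length via an explicit computation of the two spectra.

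For the reduction, I would argue that gluing with $F_{n+1} \square \mathbb{N}$ preserves the spectrum above length $2$. The only degree-$2$ vertices of $F_{n+1}\square \mathbb{N}$ are the pairs $(v,0)$ with $v$ a leaf of $F_{n+1}$; these all lie in the range of $\chi_{G_{n+1}}$, so after identification with a neighbour of a proper leaf of $G'_{n+1}$ (already of degree $\geq 3$) they acquire degree $\geq 5$ and cannot be interior to any mii-path. Therefore every mii-path of length $\geq 3$ in $G_{n+1}$ lies entirely inside $G'_{n+1}$. Conversely, the degree-$2$ vertices of $G'_{n+1}$ are not in the domain of $\chi_{G_{n+1}}$ (whose domain consists of neighbours of proper leaves, which have degree $\geq 3$), so they retain degree $2$ in $G_{n+1}$ and every mii-path of $G'_{n+1}$ survives. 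The same holds for $H$, yielding $\Sigma(G_{n+1})\cap\{3,4,\ldots\}=\Sigma(G'_{n+1})\cap\{3,4,\ldots\}$ and analogously for $H$.

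Next, I would decompose each of $G'_{n+1}$ and $H'_{n+1}$ along the closure. Since by design all vertices where distinct pieces meet have degree $\geq 3$ (the roots of attached subgraphs, the marked vertices $u_{2k+2},u_{2k+3},v_{k+1},v_{2k+2},v_{2k+3}$, and the images of promise leaves under the closure gluing), no mii-path crosses between pieces, and summing over pieces gives
\begin{align*}
\Sigma(G'_{n+1}) &= \Sigma(G_n) \cup \Sigma(H_n) \cup \Sigma(G_n(r)) \cup \Sigma(G_n(v)) \cup \{1,\ \tilde{k}_n + 1,\ k_{n+1}\}, \\
\Sigma(H'_{n+1}) &= \Sigma(H_n) \cup \Sigma(G_n(r)) \cup \Sigma(G_n(v)) \cup \{1,\ \tilde{k}_n + 1,\ k_{n+1}\},
\end{align*}
the essential discrepancy being that $G'_{n+1}$ still contains the original unsplit copy of $G_n$ (inside $\tilde{G}_n$), whereas $H'_{n+1}$ contains only the disjoint pieces $\hat{G}_n(r)$ and $\hat{G}_n(\hat{v})$ of it.

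It remains to extract a witness. Because $\Sigma(H_n)$ already lies in $\Sigma(H'_{n+1})$ via closure copies of $M_n(\vec{p}_1)$ (which contain a $\hat{H}_n$), a useful witness must come from the direction $\Sigma(G_n) \setminus \Sigma(H_n)$; some such $k^* \geq 3$ is supplied by~\ref{specdis}. Since $k^* \leq \sigma_0(G_n) \leq \tilde{k}_n - 1$, it automatically avoids $\{1, \tilde{k}_n + 1, k_{n+1}\}$. The delicate step --- and the main obstacle --- is ruling out $k^* \in \Sigma(G_n(r)) \cup \Sigma(G_n(v))$. I expect to handle this either by taking $k^*$ to be the maximum of $\Sigma(G_n) \triangle \Sigma(H_n)$ and exploiting the defining inequalities $\tilde{k}_n > \sigma_0(G_n(r)), \sigma_0(G_n(v))$, or --- following the analogous step of \cite[\S4.6]{BEHLP17} --- by a suitable strengthening of the inductive invariant so that the spectral witness is known to persist under the splitting of $G_n$ along $v$. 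Once that is settled, $k^* \in \Sigma(G'_{n+1}) \setminus \Sigma(H'_{n+1})$, and by the reduction of the first paragraph $k^* \in \Sigma(G_{n+1}) \triangle \Sigma(H_{n+1})$, as required.
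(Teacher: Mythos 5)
Your reduction to $G'_{n+1}$ versus $H'_{n+1}$ via the gluing with $F_{n+1}\square\N$ is correct and matches what the paper does. But from there you take a genuinely different route: the paper never decomposes $\Sigma(G'_{n+1})$ and $\Sigma(H'_{n+1})$ at all. It instead computes $\sigma_1(\tilde G_n)=k_n$ and $\sigma_1(\tilde H_n)=\tilde k_n+1$ (the new length created by the branch to $\hat G_n(\hat v)$ at $v_{k+1}$), takes $\tilde k_n+1$ as its distinguishing witness for $\tilde G_n$ versus $\tilde H_n$, and then asserts in one line that, because the closures are ``proper extensions'' of $\tilde G_n$ and $\tilde H_n$, spectral distinguishability carries over. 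Your proposal instead computes the spectra of the closures directly and tries to find a witness in $\Sigma(G_n)\setminus\Sigma(H_n)$.

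As you yourself flag, your argument is incomplete at exactly its central step, and neither of the repairs you gesture at is carried out or obviously correct. Taking $k^*=\max\bigl(\Sigma(G_n)\triangle\Sigma(H_n)\bigr)$ only gives $k^*\le k_n\le\tilde k_n-1$; since $\sigma_0(G_n(r))$ and $\sigma_0(G_n(v))$ can themselves be as large as $\tilde k_n-1$, the defining inequalities for $\tilde k_n$ give no control over whether $k^*\in\Sigma(G_n(r))\cup\Sigma(G_n(v))$. And ``a suitable strengthening of the inductive invariant'' is not identified. You cannot claim the lemma until you either exhibit a concrete $k^*\ge 3$ in $\Sigma(G_n)\setminus\bigl(\Sigma(H_n)\cup\Sigma(G_n(r))\cup\Sigma(G_n(v))\bigr)$, or change the invariant so that one is guaranteed. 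One further observation worth making explicit: your spectrum decomposition shows that $\tilde k_n+1\in\Sigma(G'_{n+1})$ as well, because copies of $\tilde H_n(\vec p_2)$ --- which contain the $v_{k+1}$ to $v_{2k+2}$ segment of length $\tilde k_n+1$ --- get glued onto the blue leaves of $\tilde G_n$ when forming the closure. So the single length $\tilde k_n+1$ that distinguishes $\tilde G_n$ from $\tilde H_n$ does not by itself distinguish $G'_{n+1}$ from $H'_{n+1}$, and the paper's one-line ``as before, \dots they are spectrally distinguishable'' is doing more work than it acknowledges; your explicit decomposition is the right tool to scrutinise this, but it still leaves the lemma unproved at the step you identified.
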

\begin{proof}
Since in $\tilde{G}_n$ we have that all long maximally bare paths except for those of length $k_{n+1}$ are contained inside $G_n$ or $\hat{H}_n$, it follows from our induction assumption \ref{kbigenough} that $\sigma_1(\tilde{G}_n) = k_n$. However, in $\tilde{H}_n$, we attached $\hat{G}_n(\hat{v})$ to generate an maximally bare path of length $\tilde{k}_n + 1$ in $\tilde{H}_n$ (see Fig.~\ref{constructionfigure2}), implying that  $$\sigma_1(\tilde{H}_n) = \tilde{k}_n + 1 > k_n = \sigma_1(\tilde{G}_n).$$
As before, since the promise closures $G'_{n+1}$ and $H'_{n+1}$ are proper extensions of $\tilde{G}_n$ and $\tilde{H}_n$, they are spectrally distinguishable. Lastly, since $F_{n+1} \times \N$ has no leaves and no maximally bare paths of length bigger than two, the same is true for $G_{n+1}$ and $H_{n+1}$.
\end{proof}

\begin{lemma}\label{l:endstuff}
The graphs $G_{n+1}$ and $H_{n+1}$ have exactly one end, and $\Omega(G_{n+1} \cup H_{n+1}) \subset \overline{R_{n+1} \cup B_{n+1}}$.
\end{lemma}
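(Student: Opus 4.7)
I will establish both claims for $G_{n+1}$; the arguments for $H_{n+1}$ are symmetric. My plan is to apply Corollary~\ref{c:oneend} to the gluing sum $G_{n+1} = G'_{n+1} \oplus_{\chi_{G_{n+1}}} (F_{n+1} \times \N)$. First, $F_{n+1} \times \N$ is one-ended: $F_{n+1}$ is a connected, locally finite, infinite tree, and for any finite $S$ the `upper slab' $F_{n+1} \times \Set{m : m \geq k}$ is connected and disjoint from $S$ for $k$ large, so any two rays in $F_{n+1} \times \N$ are end-equivalent. Also $\operatorname{dom}(\chi_{G_{n+1}})$ is infinite, since the closure construction producing $F_{n+1}$ creates fresh placeholder leaves of $L'_3$ and $L'_4$ at every level.

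The nontrivial input to Corollary~\ref{c:oneend} is density of $\operatorname{dom}(\chi_{G_{n+1}})$ for $\Omega(G'_{n+1})$, which, since each element of $\operatorname{dom}(\chi_{G_{n+1}})$ is the unique neighbour of a proper leaf in $R_{n+1} \cup B_{n+1}$, is equivalent to $\Omega(G'_{n+1}) \subseteq \closure{R_{n+1} \cup B_{n+1}}$ inside $|G'_{n+1}|$. I would prove this by exploiting the recursive structure of $G'_{n+1} = \cl(M_n)$ as a countably iterated gluing of copies of $M_n(\vec{p}_1)$ and $M_n(\vec{p}_2)$ along their $L_1$- and $L_2$-promise leaves. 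Given $\omega \in \Omega(G'_{n+1})$ and a finite $S$, fix $R \in \omega$ and distinguish two cases. If the tail of $R$ is confined to a single glued copy of $M_n(\vec{p}_i)$, then since $M_n(\vec{p}_i)$ differs from a copy of $G_n$ or $H_n$ only in finitely many vertices, $\omega$ is an end of a copy of $G_n$ or $H_n$; by induction hypothesis~\ref{dense} it is a limit of the $R_n \cup B_n$-leaves of that copy, and each such leaf has a subtree of further copies attached inside $G'_{n+1}$ containing placeholders of $L_3 \cup L_4$, so $\omega \in \closure{R_{n+1} \cup B_{n+1}}$. Otherwise the tail of $R$ enters infinitely many distinct glued copies, and since every copy of $M_n(\vec{p}_i)$ contains placeholder leaves of both $L_3$ and $L_4$ (namely $\rooot{G'_{n+1}}, g$ or $\rooot{H'_{n+1}}, y$), cofinitely many visited copies are disjoint from $S$ and contribute coloured leaves to $C(\omega, S)$.

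With density secured, Corollary~\ref{c:oneend} gives that $G_{n+1}$ is one-ended, with unique end $\hat{\omega}$; it remains to check $\hat{\omega} \in \closure{R_{n+1} \cup B_{n+1}}$. For any finite $S \subset V(G_{n+1})$, choose $k$ so that $F_{n+1} \times \Set{m : m \geq k}$ is disjoint from $S$; this connected slice lies in $C(\hat{\omega}, S)$, and for every coloured leaf $\ell'$ of $F_{n+1}$ whose vertical path $\Set{\ell'} \times \Set{0, \ldots, k}$ avoids $S$—which fails only for finitely many $\ell'$, as $S$ is finite and $R'_{n+1} \cup B'_{n+1}$ is infinite—the identified vertex $(\ell', 0)$ and hence its adjacent coloured leaf of $G'_{n+1}$ lies in $C(\hat{\omega}, S)$. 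The main technical difficulty lies in the density argument of the previous paragraph, where both ends confined to a single infinite copy (requiring the inductive hypothesis) and ends traversing infinitely many copies (where every copy automatically contributes placeholders of both colours) must be handled simultaneously.
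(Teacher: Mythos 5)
Your proposal is correct and follows essentially the same route as the paper: you reduce the claim to showing that $\operatorname{dom}(\chi_{G_{n+1}})$ (equivalently, $R_{n+1}\cup B_{n+1}$) is dense for $\Omega(G'_{n+1})$, then invoke Corollary~\ref{c:oneend}, and your density argument uses the same dichotomy as the paper's --- an end confined to one glued copy of $M_n(\vec{p}_i)$ (handled by the induction hypothesis~\ref{dense}) versus an end traversing infinitely many copies (handled by the placeholder leaves of $L_3\cup L_4$ appearing in each copy).
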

\begin{proof}
By the induction assumption \ref{dense}, we know that $\Omega(G_n \cup H_n) \subset \overline{R_n \cup B_n}$.  

\begin{claimm}
The set $R_{n+1} \cup B_{n+1}$ is dense for $G'_{n+1}$. 
\end{claimm}
Consider a finite $S \subset V(G'_{n+1})$. We have to show that any infinite component $C$ of $G'_{n+1} - S$ has non-empty intersection with $R_{n+1} \cup B_{n+1}$. 

Let us consider the global structure of $G'_{n+1}$ as being roughly that of an infinite regular tree, as in Figure~\ref{sketchfig2}. Specifically, we imagine a copy of $G_n$ at the top level, at the next level are the copies of $G_n$ and $H_n$ that come from a blue or red leaf in the top level, at the next level are the copies attached to blue or red leaves from the previous level, and so on. 

With this in mind, it is evident that either $C$ contains an infinite component from some copy of $H_n - S$ or $G_n - S$, or $C$ contains an infinite ray from this tree structure. In the first case, we have $\cardinality{C \cap \p{R_n \cup B_n}} = \infty$ by induction assumption. Since any vertex from $R_{n} \cup B_n$ has a leaf from $R_{n+1} \cup B_{n+1}$ within distance $k_{n+1}+1$ (cf.\ Figure \ref{constructionfigure2}), it follows that $C$ also meets $R_{n+1} \cup B_{n+1}$ infinitely often. In the second case, the same conclusion follows, since between each level of our tree structure, there is a pair of leaves in $R_{n+1} \cup B_{n+1}$. This establishes the claim.

\begin{claimm}
The set $R_{n+1} \cup B_{n+1}$ is dense for $H'_{n+1}$. 
\end{claimm}

The proof of the second claim is entirely symmetric to the first claim.

To complete the proof of the lemma, observe that $F_{n+1} \times \mathbb{N}$ is one-ended, and with $R_{n+1} \cup B_{n+1}$, also $\operatorname{dom}(\chi_{G_{n+1}}) \cup \operatorname{dom}(\chi_{H_{n+1}})$ is dense for $G'_{n+1} \cup H'_{n+1}$ by our claims.  So by Corollary~\ref{c:oneend}, the graphs $G_{n+1}$ and $H_{n+1}$ have exactly one end. Moreover, since $R_{n+1} \cup B_{n+1}$ meets both graphs infinitely, it follows immediately that it is dense for $G_{n+1} \cup H_{n+1}$.
\end{proof}

\begin{lemma}
The graph $G_{n+1}$ is a proper bare extension of infinite growth of $G_{n}$ at $R_{n} \cup B_{n}$ to length $k_{n}+1$, and  $\Ball_{G_{n+1}}(G_{n}, k_{n}+1)$ does not meet $R_{n+1} \cup B_{n+1}$. Similarly, $H_{n+1}$ is a proper bare extension of infinite growth of $H_{n}$ at $R_{n} \cup B_{n}$ to length $k_{n}+1$, and $\Ball_{H_{n+1}}(H_{n}, k_{n}+1)$ does not meet $R_{n+1} \cup B_{n+1}$. Hence, \ref{ballsG} and \ref{ballsH} are satisfied at stage $n+1$.
\end{lemma}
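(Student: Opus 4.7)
I focus on $G_{n+1}$; the argument for $H_{n+1}$ is entirely analogous. The strategy is to show that the $(k_n+1)$-ball of $G_n$ in $G_{n+1}$ agrees with the corresponding ball in $G'_{n+1}$, and then to read off the mii-extension structure from the explicit form of $\tilde{G}_n$.

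\emph{Step 1: Local structure of $G'_{n+1}$ near $G_n$.} Fix $\ell \in (R_n \cup B_n) \cap V(G_n)$ and put $i=1$ if $\ell \in R_n$, $i=2$ if $\ell \in B_n$. By \ref{i:keepspromises}, the rooted graph $G'_{n+1}(\vec{q}_\ell)$ is isomorphic to $\cl(M_n)(\vec{p}_i)$. Inspection of Figure~\ref{constructionfigure2} shows that, starting from the root of $M_n(\vec{p}_i)$, one has a single additional-leaf neighbor and a non-branching path of length $2\tilde{k}_n+1$ (the first branching vertex, $u_{2k+2}$ or $v_{k+1}$, sits at distance $2\tilde{k}_n+2$). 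Moreover, every promise leaf in $L_1 \cup L_2 \cup L_3 \cup L_4$ lying in $M_n(\vec{p}_i)$ sits at distance $\geq 2\tilde{k}_n+3$ from the root, so the promise closure process leaves the ball of radius $2\tilde{k}_n+2$ around the root unchanged. Because $k_n+1 \leq \tilde{k}_n \leq 2\tilde{k}_n+1$, the picture in $G'_{n+1}$ within distance $k_n+1$ of $\ell$ (on the side of the extension) is precisely a non-branching path of length $k_n+1$ starting at $\ell$ together with one additional leaf adjacent to $\ell$, and this ball meets no leaf from $R_{n+1} \cup B_{n+1}$.

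\emph{Step 2: The gluing with $F_{n+1} \times \N$ does not affect the ball.} In $G_{n+1} = G'_{n+1} \oplus_{\chi_{G_{n+1}}}(F_{n+1} \times \N)$ the identifications take place at unique neighbors of promise leaves in $R_{n+1} \cup B_{n+1}$, each at distance $\geq 2\tilde{k}_n+2 > k_n+1$ from $G_n$ in $G'_{n+1}$ by Step~1. Hence any path in $G_{n+1}$ of length $\leq k_n+1$ starting in $G_n$ must lie entirely inside $G'_{n+1}$: any path leaving $G'_{n+1}$ would need to traverse a gluing point, which is already too far. Consequently $\Ball_{G_{n+1}}(G_n, k_n+1) = \Ball_{G'_{n+1}}(G_n, k_n+1)$.

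\emph{Step 3: Conclusion.} Steps~1 and 2 together show that $\Ball_{G_{n+1}}(G_n, k_n+1)$ is obtained from $G_n$ by adjoining, at each $\ell \in (R_n \cup B_n) \cap V(G_n)$, a new leaf adjacent to $\ell$ and a non-branching path of length $k_n+1$ starting at $\ell$—precisely the mii-extension structure at length $k_n+1$. The extension is proper because $R_n \cup B_n$ consists of proper leaves of $G_n \cup H_n$ by the standing induction hypothesis. The ball contains no leaf of $R_{n+1} \cup B_{n+1}$ by the final statement of Step~1 combined with the distance equality of Step~2. Finally, each component of $G_{n+1}-G_n$ meets $G_n$ only at some such $\ell$ and there contains (part of) the infinite graph $\cl(M_n)(\vec{p}_i)$, so every such component is infinite, giving infinite growth. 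The main technical hurdle is the distance argument of Step~2; everything else follows by unpacking the construction in combination with \ref{i:keepspromises}.
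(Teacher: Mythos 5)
Your proof is correct and takes essentially the same route as the paper: control the distances from $G_n$ to the nearest promise leaves of $R_{n+1}\cup B_{n+1}$ and to the gluing points of $F_{n+1}\times\N$, conclude that $\Ball_{G_{n+1}}(G_n,k_n+1)=\Ball_{G'_{n+1}}(G_n,k_n+1)$, and read off the mii-extension structure from the explicit local picture of $\tilde G_n$ and $\tilde H_n$. (The paper packages the first part through an auxiliary $1$-step extension $K$ with a coarser distance bound, but this is only presentational; also note that for $i=2$ the first branching vertex seen from $\rooot{H_n}=v_p$ along the new path is $v_{2\tilde k_n+3}$ rather than $v_{\tilde k_n+1}$, though the distance $2\tilde k_n+2$ you state is correct, so the slip is immaterial.)
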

\begin{proof}
We will just prove the statement for $G_{n+1}$, as the corresponding proof for $H_{n+1}$ is analogous.

Since $G'_{n+1}$ is an $\p{ (\tilde{R}_n \cup \tilde{B}_n) \cap V(\tilde{G}_n)}$-extension of $\tilde{G}_n$, it follows that $G'_{n+1}$ is an 
\begin{align}
\p{ \p{(\tilde{R}_n \cup \tilde{B}_n) \cap V(G_n)} \cup \rooot{G_n}} = \big((R_n \cup B_n) \cap V(G_n)\big)\text{-extension of } G_n. \numberthis \label{bla123}
\end{align}

However, from the construction of the closure of a graph it is clear that that $G'_{n+1}$ is also an $L'$-extension of the supergraph $K$ of $G_n$ formed by gluing a copy of $\tilde{G}_n(\vec{p_1})$ to every leaf in $R_n \cap V(G_n)$ and a copy of $\tilde{H}_n(\vec{p_2})$ to every leaf in $B_n \cap V(G_n)$, where $L'$ is defined as the set of inherited promise leaves from the copies of $\tilde{G}_n(\vec{p_1})$ and $\tilde{H}_n(\vec{p_2})$. 

However, we note that every promise leaf in $\tilde{G}_n(\vec{p_1})$ and $\tilde{H}_n(\vec{p_2})$ is at distance at least $\tilde{k}_n+1$ from the respective root, and so $\Ball_{G'_{n+1}}(G_n, \tilde{k}_n) = \Ball_{K}(G_n, \tilde{k}_n)$. However, $\Ball_{K}(G_n,\tilde{k}_n)$ can be seen immediately to be an bare extension of $G_n$ at $R_n \cup B_n$ to length $\tilde{k}_n$, and since $\tilde{k}_n \geq k_n + 1$ it follows that $\Ball_{G'_{n+1}}(G_n, k_n + 1)$ is an bare extension of $G_n$ at $R_n \cup B_n$ to length $k_n + 1$ as claimed.

Finally, we note that $R_{n+1} \cup B_{n+1}$ is the set of promise leaves $\cl(\script{L}_n)$. By the same reasoning as before, $\Ball_{G'_{n+1}}(G_n, k_n + 1)$ contains no promise leaf in $\cl(\script{L}_n)$, and so does not meet $R_{n+1} \cup B_{n+1}$ as claimed. Furthermore, it doesn't meet any neighbours of $R_{n+1} \cup B_{n+1}$.

Recall that $G_{n+1}$ is formed by gluing a set of vertices in $(F_{n+1} \times \mathbb{N})$ to neighbours of vertices in $R_{n+1} \cup B_{n+1}$. However, by the above claim, $\Ball_{G'_{n+1}}(G_n, k_n + 1)$ does not meet any of the neighbours of $R_{n+1} \cup B_{n+1}$ and so $\Ball_{G_{n+1}}(G_n, k_n + 1) = \Ball_{G'_{n+1}}(G_n, k_n + 1)$, and the claim follows.

Finally, to see that $G_{n+1}$ is a leaf extension of $G_n$ of infinite growth, it suffices to observe that $G_{n+1} - G_n$ consists of one component only, which is a superset of the infinite graph $F_n \times \N$.
\end{proof}

\begin{lemma}
There is a family of isomorphisms
\[
\script{H}_{n+1} = \set{h_{n+1,x} \colon G_{n+1} - x \to H_{n+1} - \varphi_{n+1}(x)}:{x \in X_{n+1}},
\]
such that
\begin{itemize}
			\item  $h_{n+1, x} \restriction \p{G_{n} - x} = h_{n,x}$ for all $x \in X_{n}$,   
                         \item the image of $R_{n+1} \cap V(G_{n+1})$ under $h_{n+1,x}$ is $R_{n+1} \cap V(H_{n+1})$ for all $x \in X_{n+1}$,
                         \item the image of $B_{n+1} \cap V(G_{n+1})$ under $h_{n+1,x}$ is $B_{n+1} \cap V(H_{n+1})$ for all $x \in X_{n+1}$.
\end{itemize}
\end{lemma}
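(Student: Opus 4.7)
The plan is to apply Lemma \ref{l:extend} to each isomorphism $h'_{n+1,x} \colon G'_{n+1} - x \to H'_{n+1} - \varphi_{n+1}(x)$ supplied by Lemma \ref{petersclaim}, in order to lift it across the two gluing operations
\[
G_{n+1} = G'_{n+1} \oplus_{\chi_{G_{n+1}}} (F_{n+1} \times \N), \qquad H_{n+1} = H'_{n+1} \oplus_{\chi_{H_{n+1}}} (F_{n+1} \times \N).
\]
This requires two checks: that $h'_{n+1,x}$ restricts to a bijection $\operatorname{dom}(\chi_{G_{n+1}}) \to \operatorname{dom}(\chi_{H_{n+1}})$, and that there is an automorphism of $F_{n+1} \times \N$ compatible with this restriction through $\chi_{G_{n+1}}$ and $\chi_{H_{n+1}}$.

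For the first check, I would use that $h'_{n+1,x}$ is $\cl(\vec{P})$-respecting, hence maps $\script{L}(G'_{n+1})$ bijectively onto $\script{L}(H'_{n+1})$; because every promise leaf has a unique neighbour and $h'_{n+1,x}$ is a graph isomorphism, it sends the neighbour of a leaf $\ell$ to the neighbour of $h'_{n+1,x}(\ell)$, which, by (\ref{chi_G}) and (\ref{chi_H}), is exactly the required bijection between the domains of $\chi_{G_{n+1}}$ and $\chi_{H_{n+1}}$. For the second check, I would take $\pi_x := \hat{\pi}_{n+1,x} \times \operatorname{id}_\N$, where $\hat{\pi}_{n+1,x} \colon F_{n+1} \to \hat{F}_{n+1}$ is provided by Lemma \ref{maxclaim} and $\hat{F}_{n+1}$ is identified with $F_{n+1}$ as in \S\ref{subsubsec:gluing}. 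Unpacking (\ref{chi_G}) and (\ref{chi_H}), the identity $\chi_{G_{n+1}}(z) = \psi_{G'_{n+1}}(\ell)$ whenever $z$ is the neighbour of $\ell \in \script{L}(G'_{n+1})$ (and symmetrically for $\chi_{H_{n+1}}$) reduces the required equation $\pi_x \circ \chi_{G_{n+1}} = \chi_{H_{n+1}} \circ h'_{n+1,x}$ to precisely the commutative diagram $\hat{\pi}_{n+1,x} \circ \psi_{G'_{n+1}} = \psi_{H'_{n+1}} \circ h'_{n+1,x}$ that is the conclusion of Lemma \ref{maxclaim}.

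Lemma \ref{l:extend} then yields an isomorphism $h_{n+1,x} \colon G_{n+1} - x \to H_{n+1} - \varphi_{n+1}(x)$ extending $h'_{n+1,x}$; collecting these for $x \in X_{n+1}$ produces the desired family $\script{H}_{n+1}$. The three bulleted properties are afterthoughts: for $x \in X_n$, we have $h_{n+1,x} \restriction (G_n - x) = h'_{n+1,x} \restriction (G_n - x) = h_{n,x}$ by Lemma \ref{petersclaim}; and since $R_{n+1} \cup B_{n+1} \subseteq V(G'_{n+1}) \cup V(H'_{n+1})$ (the gluing adds no coloured leaves), the preservation of the red and blue classes under $h_{n+1,x}$ is inherited from the $\cl(\vec{P})$-respecting nature of $h'_{n+1,x}$. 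The main difficulty is notational rather than conceptual: the delicate part is recognising that $\chi$ is essentially $\psi$ postcomposed with the leaf-to-neighbour bijection, and carefully tracking the identifications between $F_{n+1}$, $\hat{F}_{n+1}$ and $F_{n+1} \times \singleton{0}$ when translating Lemma \ref{maxclaim} into the statement of Lemma \ref{l:extend}.
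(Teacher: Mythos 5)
Your proposal is correct and takes essentially the same approach as the paper: apply Lemma~\ref{l:extend} to the isomorphisms $h'_{n+1,x}$ from Lemma~\ref{petersclaim}, using Lemma~\ref{maxclaim} to supply the compatible automorphism of $F_{n+1}\times\N$, and then observe that the three bulleted properties are inherited from $h'_{n+1,x}$. You have merely spelled out the verification that $\chi_{G_{n+1}}$, $\chi_{H_{n+1}}$ and $\hat{\pi}_{n+1,x}\times\operatorname{id}_\N$ satisfy the hypotheses of Lemma~\ref{l:extend}, which the paper leaves as an easy check.
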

\begin{proof}
Recall that Lemma \ref{petersclaim} shows that the there exists such a family of isomorphisms between $G'_{n+1}$ and $H'_{n+1}$. Furthermore, we have that
\[
G_{n+1} := G'_{n+1} \oplus_{\chi_{G_{n+1}}} (F_{n+1} \times \mathbb{N}) \text{    and     } H_{n+1} := H'_{n+1} \oplus_{\chi_{H_{n+1}}} (F_{n+1} \times \mathbb{N}).
\]
where it is easy to check that $\chi_{G_{n+1}}$ and $\chi_{H_{n+1}}$ satisfy the assumptions of Lemma \ref{l:extend}, since the functions  $\psi_{G'_{n+1}}$ and $\psi_{H'_{n+1}}$ do by Lemma~\ref{maxclaim}.

More precisely, given $x \in X_{n+1}$ and $h'_{n+1,x}$, it follows from Lemma \ref{maxclaim} that
\[
\chi_{H_{n+1}} \circ h'_{n+1,x} \circ \chi_{G_{n+1}}
\]
extends to an isomorphism $\pi_{n+1,x}$ of $F_{n+1}$. Hence, by Lemma \ref{l:extend}, $h'_{n+1,x}$ extends to an isomorphism $h_{n+1,x}$ from $G_{n+1} - x$ to $H_{n+1}-y$. That this isomorphism satisfies the three properties claimed follows immediately from Lemma \ref{petersclaim} and the fact that $h_{n+1,x}  \restriction \p{G_{n} - x} = h'_{n+1,x}  \restriction \p{G_{n} - x}$.
\end{proof}

\begin{lemma}
There exist bijections
\[
\psi_{G_{n+1}} \colon V(G_{n+1}) \cap \p{R_{n+1} \cup B_{n+1}} \to R'_{n+1} \cup B'_{n+1}
\]
and
\[
\psi_{H_{n+1}} \colon V(H_{n+1}) \cap \p{R_{n+1} \cup B_{n+1}} \to R'_{n+1} \cup B'_{n+1},
\]
and a family of isomorphisms 
\[
\Pi_{n+1} = \set{\pi_{n+1,x} \colon F_{n+1} \to F_{n+1}}:{x \in X_{n+1}},
\]
such that 
\begin{itemize}

     \item $\pi_{n+1,x} \restriction R'_{n+1}$ is a permutation of $R'_{n+1}$ for each $x$, 
     \item $\pi_{n+1,x} \restriction B'_{n+1}$ is a permutation of $B'_{n+1}$ for each $x$, and   
     \item for each $x \in X_{n+1}$, the corresponding diagram commutes: \begin{center}
\begin{tikzpicture}
  \matrix (m)
    [
      matrix of math nodes,
      row sep    = 3em,
      column sep = 9em
    ]
    {
      \script{L}(G_{n+1})              & \script{L}(H_{n+1}) \\
      \script{L}(F_{n+1}) &  \script{L}(F_{n+1})           \\
    };
  \path
    (m-1-1) edge [->] node [left] {\scriptsize{$\psi_{G_{n+1}}$}} (m-2-1)
    (m-1-1.east |- m-1-2)
      edge [->] node [above] {\scriptsize{$h_{n+1,x} \restriction \script{L}(G_{n+1})$}} (m-1-2);
        \path
     (m-2-1) edge [->] node [above] {\scriptsize{$\pi_{n+1,x}  \restriction \script{L}(F_{n+1})$}} (m-2-2);
        \path
     (m-1-2) edge [->] node [right] {\scriptsize{$\psi_{H_{n+1}}$}} (m-2-2);
\end{tikzpicture}
\end{center}   
	I.e.\ for every $\ell \in V(G_{n+1}) \cap \p{R_{n+1} \cup B_{n+1}}$ we have $\pi_{n+1,x}(\psi_{G_{n+1}}(\ell)) = \psi_{H_{n+1}}(h_{n+1,x}(\ell))$.
    \end{itemize}
\end{lemma}
\begin{proof}
Since $R_{n+1},B_{n+1} \subset G'_{n+1} \cup H'_{n+1}$, and $h_{n+1,x}$ extends $h'_{n+1,x}$ for each $x \in X_{n+1}$, this follows immediately from Lemma \ref{maxclaim} after identifying $\hat{F}_{n+1}$ with $F_{n+1}$.
\end{proof}

This completes our recursive construction, and hence the proof of Theorem~\ref{t:one} is complete.

\section{A non-reconstructible graph with countably many ends}\label{s:prooftwo}
In this section we will prove Theorem~\ref{t:count}. Since the proof will follow almost exactly the same argument as the proof of Theorem~\ref{t:one}, we will just indicate briefly here the parts which would need to be changed, and how the proof is structured.

The proof follows the same back and forth construction as in Section \ref{s:bandf}, however instead of starting with finite graphs $G_0$ and $H_0$ we will start with two infinite graphs, each containing one free end. For example we could start with the graphs in Figure \ref{f:countbase}. 
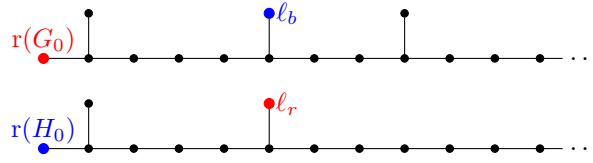
\begin{figure}[ht!]
\centering
\begin{tikzpicture}[scale=0.6]
\def \n {0}
\def \l {1}

\foreach \s in {1,...,12}
{
  \node[draw, circle,scale=.3, fill] (blob\s) at (\s,\n) {};
}

\foreach \s in {1,...,11}
{
	 \pgfmathsetmacro\t{\s + 1}
  \draw (blob\s) -- (blob\t);
}

\foreach \s in {2,6,9}
{
  \node[draw, circle,scale=.3, fill] (leaf\s) at (\s,\l) {};
   \draw (blob\s) -- (leaf\s);
}

\node[draw, red, circle,scale=.4, fill] (redroot) at (1,0) {};
\node[text=red] at (1,.4) {$\rooot{G_0}$};

\node[draw, blue, circle,scale=.4, fill] (blueleaf) at (6,1) {};
\node[text=blue] at (6.4,1) {$\ell_b$};

\def \n {-2}
\def \l {-1}

\foreach \s in {1,...,12}
{
  \node[draw, circle,scale=.3, fill] (blob\s) at (\s,\n) {};
}

\foreach \s in {1,...,11}
{
	 \pgfmathsetmacro\t{\s + 1}
  \draw (blob\s) -- (blob\t);
}

\foreach \s in {2,6}
{
  \node[draw, circle,scale=.3, fill] (leaf\s) at (\s,\l) {};
   \draw (blob\s) -- (leaf\s);
}

\node[draw, blue, circle,scale=.4, fill] (blueroot) at (1,-2) {};
\node[text=blue] at (1,-1.6) {$\rooot{H_0}$};

\node[draw, red, circle,scale=.4, fill] (redleaf) at (6,-1) {};
\node[text=red] at (6.4,-1) {$\ell_r$};

\draw (12,0)--(12.5,0);
\draw (12,-2)--(12.5,-2);

\node at (13,-2) {$\ldots$};

\node at (13,0) {$\ldots$};

\end{tikzpicture}
\caption{A possible choice for $G_0$ and $H_0$, where the dots indicate a ray.}
\label{f:countbase}
\end{figure}

The induction hypotheses remain the same, with the exception of \ref{ends} and \ref{dense} which are replaced by
\begin{enumerate}[label={\upshape($\dagger$\arabic{*}')}]
\setcounter{enumi}{6}
\item \label{ends'} $G_n$ and $H_n$ have exactly one limit end and infinitely many free ends when $n \geq 1$, and
\item \label{dense'} $ \closure{R_n \cup B_n} \cap \Omega(G_n \cup H_n)= \Omega'(G_n \cup H_n)$.
\end{enumerate}

The arguments of Section~\ref{subsec:setup} will then go through mutatis mutandis: for the proof of the analogue of Lemma \ref{l:endstuff}, use Corollary \ref{c:countend} instead of Corollary \ref{c:oneend}.

To show that the construction then yields the desired non-reconstructible pair of graphs with countably many ends, we have to check that \ref{ends'} holds for the limit graphs $G$ and $H$. It is clear that since $ \closure{R_n \cup B_n} \cap \Omega(G_n \cup H_n)= \Omega'(G_n \cup H_n)$, every free end in a graph $G_n$ or $H_n$ remains free in the limit. Moreover, a similar argument to that in Section \ref{subsec:result} shows that any pair of rays in $G$ or $H$ which were not in a free end in some $G_n$ or $H_n$ are equivalent in $G$ or $H$, respectively.

However, since the end space of a locally finite connected graph is a compact metrizable space, and therefore has a countable dense subset, such a graph has at most countably many free ends, since they are isolated in $\Omega(G)$. Hence, both $G$ and $H$ have at most countably many free ends, and one limit end, and so both graphs have countably many ends.

\bibliographystyle{plain}
\bibliography{reconstruction}
\end{document}